\newtheorem{theorem}{Theorem}[section]
\newtheorem{corollary}{Corollary}[section]
\newtheorem{lemma}{Lemma}[section]
\newtheorem{proposition}{Proposition}[section]
\theoremstyle{definition}
\newtheorem{definition}{Definition}[section]
\theoremstyle{remark}
\newtheorem{remark}{Remark}[section]
\numberwithin{equation}{section}
\newcommand\blfootnote[1]{%
  \begingroup
  \renewcommand\thefootnote{}\footnote{#1}%
  \addtocounter{footnote}{-1}%
  \endgroup
}
\newcommand{\ov}{\overline}
\newcommand{\e}{\varepsilon}
\renewcommand{\O}{\Omega}
\renewcommand{\vec}[1]{\mathbf{#1}}
\newcommand{\field}[1]{\mathbb{#1}}
\newcommand{\R}{\field{R}}
\newcommand{\er}{\eqref}
\DeclareMathOperator{\Div}{div} \DeclareMathOperator{\dist}{dist}
\DeclareMathOperator{\supp}{supp}
\renewcommand{\O}{\Omega}
\newcommand{\f}{\varphi}
\renewcommand{\vec}[1]{\boldsymbol{#1}}
\DeclareMathOperator{\essinf}{ess\,inf}
\date{}
\begin{document}
\title{
Jump detection in Besov spaces via a new BBM
  formula. Applications to Aviles-Giga type functionals
} \maketitle
\begin{center}
\textsc{Arkady Poliakovsky \footnote{E-mail:
poliakov@math.bgu.ac.il}
}\\[3mm]
Department of Mathematics, Ben Gurion University of the Negev,\\
P.O.B. 653, Be'er Sheva 84105, Israel
\\[2mm]
\end{center}

\begin{abstract}Motivated by the formula, due to Bourgain, Brezis and Mironescu,
\begin{equation*}
  \lim_{\e\to 0^+} \int_\Omega\int_\Omega
\frac{|u(x)-u(y)|^q}{|x-y|^q}\,\rho_\e(x-y)\,dx\,dy=K_{q,N}\|\nabla
u\|_{L^{q}}^q\,,
\end{equation*}
that characterizes the functions in $L^q$ that belong to $W^{1,q}$
(for $q>1$) and $BV$ (for $q=1$), respectively, we study what
happens when one replaces the denominator in the expression above by
$|x-y|$. It turns out that, for $q>1$ the corresponding functionals
``see'' only the jumps of the $BV$ function. We further identify the
function space relevant to the study of these functionals, the space
$BV^q$, as the Besov space $B^{1/q}_{q,\infty}$. We show, among
other things, that $BV^q(\Omega)$ contains  both the spaces
$BV(\Omega)\cap L^\infty(\Omega)$ and $W^{1/q,q}(\Omega)$. We also
present applications to the study of singular perturbation problems
of Aviles-Giga type. \blfootnote{\emph{2010 Mathematics Subject
Classification.} Primary
 46E35.}
\end{abstract}

\section{Introduction}
Bourgain, Brezis and Mironescu introduced in \cite{hhh1} a new
characterization of the spaces $W^{1,q}(\Omega)$, $q>1$, and
$BV(\Omega)$ using certain double integrals involving
radial mollifiers $\{\rho_\e\}$ (see \cite{hhh1} for the precise
assumptions). In the case of a domain $\Omega\subset{\mathbb R}^N$ with
Lipschitz boundary, the so
called ``BBM formula'' states  that for any $u\in L^q(\Omega)\,(q>1)$:
\begin{equation}
\label{eq:1jjj} \lim_{\e\to 0^+} \int_\Omega\int_\Omega
\frac{|u(x)-u(y)|^q}{|x-y|^q}\,\rho_\e(x-y)\,dx\,dy=K_{q,N}\|\nabla
u\|_{L^{q}}^q\,,
\end{equation}
with the convention that $\|\nabla u\|_{L^{q}}=\infty$ if $u\notin
W^{1,q}$. For the case $q=1$ the expression in \eqref{eq:1jjj}
characterizes the $BV$-space (the latter result in its full strength
is due to  D\'{a}vila~\cite{hhh5}). For further developments in this
direction see \cite{BN,hhh2,hhh3,hhh4,hhh6}. In particular, for the
simplest choice of
\begin{equation}
\label{hgghg} \rho_\e(z)
=\begin{cases}\frac{1}{\e^N}\,\frac{1}{\mathcal{L}^N(B_1(0))} &
z\in B_\e(0)\\0 & z\in\R^N\setminus
B_\e(0)\end{cases}\,,
\end{equation}
we may rewrite \er{eq:1jjj} in the cases $q>1$ and $q=1$, respectively, as
\begin{align}
\label{eq:1} \lim_{\e\to 0^+} \int_\Omega\int_{
B_\e(x)\cap\Omega}\frac{1}{\e^N}
\frac{|u(x)-u(y)|^q}{|x-y|^q}\,dy\,dx&=\mathcal{L}^N(B_1(0))\,K_{q,N}\,\|\nabla
u\|_{L^{q}}^q\,,\\
\label{eq:1jjkjh} \lim_{\e\to 0^+} \int_\Omega\int_{
B_\e(x)\cap\Omega}\frac{1}{\e^N}
\frac{|u(x)-u(y)|}{|x-y|}\,dy\,dx&=\mathcal{L}^N(B_1(0))\,K_{1,N}\,\|Du\|\,.
\end{align}
We are interested in a related  formula to \eqref{eq:1}, that is
obtained when we replace $|x-y|^q$ by $|x-y|$ in the denominator
(for $q>1$). We shall
see in
our main result Theorem~\ref{ghgghgghjjkjkzzbvqred} that the resulting
formula is very
different from the one in \eqref{eq:1jjkjh}: it involves only the
``jump part'' of the gradient. We denote the space consisting of the functions
for which the resulting expression is bounded by $BV^q$. It turns out,
as we shall explain below,
that this space is closely related to the Besov Space
$B_{q,\infty}^{1/q}$.
\par A related, but different phenomenon was investigated
by Ponce and Spector in \cite{hhh3}:  for another variation on the
BBM-formula they obtained a limit where the {\em singular part} of
$Du$ appears (i.e., the sum of the jump and Cantor parts).

\par In order to state our results we shall need some  definitions.
\begin{definition}\label{gjghghghjgghGHKKzzbvq}
Given an open set $\Omega\subset\R^N$, a real number $q\geq 1$ and a
function $u\in L^q_{loc}(\Omega,\R^d)$ define:
\begin{equation}\label{GMT'3jGHKKkkhjjhgzzZZzzZZzzbvq}
\bar
A_{u,q}\big(\Omega\big):=\sup\limits_{\e\in(0,1)}\int\limits_\Omega\int\limits_{B_\e(x)\cap\Omega}\frac{1}{\e^N}\,\frac{\big|u(
y)-u(x)\big|^q}{|y-x|}dydx,
\end{equation}
and the infinitesimal version of this quantity:
\begin{equation}\label{GMT'3jGHKKkkhjjhgzzZZzzbvq}
\hat A_{u,q}\big(\Omega\big):=\limsup\limits_{\e\to
0^+}\int\limits_\Omega\int\limits_{B_\e(x)\cap\Omega}\frac{1}{\e^N}\,\frac{\big|u(
y)-u(x)\big|^q}{|y-x|}dydx.
\end{equation}
\end{definition}
\begin{remark}\label{bhhfgccjjzzbvq}
It is clear that for any open $\Omega\subset\R^N$ and any $u\in
L^q_{loc}(\Omega,\R^d)$ we have
\begin{equation}\label{Ffgdfhgdhddfhyugjjkzzhhbvq}
\hat A_{u,q}\big(\Omega\big)\leq \bar A_{u,q}\big(\Omega\big).
\end{equation}
Moreover, if $u\in L^q(\Omega,\R^d)$ then
\begin{equation}\label{Ffgdfhgdhddfhyugkkzzhjjhhjbvq}
\bar A_{u,q}\big(\Omega\big)<\infty\quad\text{if and only if}\quad
\hat A_{u,q}\big(\Omega\big)<\infty.
\end{equation}
Clearly,
\begin{equation}\label{hjhjhghgjkghggghGHKKzzbvqkkklkljklj}
\begin{aligned}
\hat A_{u,q}\big(\R^N\big)&=\limsup\limits_{\e\to
0^+}\int\limits_{B_1(0)}\int\limits_{\R^N}\frac{1}{\e|z|}\Big|u(x+\e
z)-u(x)\Big|^qdxdz\quad\quad\text{and}\\
\bar
A_{u,q}\big(\R^N\big)&:=\sup\limits_{\e\in(0,1)}\int\limits_{B_1(0)}\int\limits_{\R^N}\frac{1}{\e|z|}\Big|u(x+\e
z)-u(x)\Big|^qdxdz\,.
\end{aligned}
\end{equation}
\end{remark}
Using the quantities ${\bar A}_{u,q}$, ${\hat A}_{u,q}$ we can now define
the space $BV^q(\Omega,\R^d)$:
\begin{definition}\label{gjghghghjgghGHKKhjhjhjhhlkjjijhjjjkjkkzzbvq}
Given an open set $\Omega\subset\R^N$, a real number $q\geq 1$ and a
function $u\in L^q(\Omega,\R^d)$ we say that $u\in
BV^q(\Omega,\R^d)$ if
\begin{equation}\label{FfgdfhgdhddfhoGUIgfhfghhjhjhjhjhjzzbvq}
\bar A_{u,q}\big(\Omega\big)<\infty.
\end{equation}
\end{definition}
Clearly, for $u\in L^q(\Omega,\R^d)$ we have $u\in
BV^q(\Omega,\R^d)$ if and only if
\begin{equation}\label{FfgdfhgdhddfhoGUIgfhfghhjhjhjhjhjzzbvqaa}
\hat A_{u,q}\big(\Omega\big)<\infty.
\end{equation}
Moreover,
$BV^q(\Omega,\R^d)$ becomes a Banach
space when equipped with the norm
\begin{equation}\label{FfgdfhgdhddfhoGUIgfhfghhjhjhjhjhjyrtrtrrtzzbvq}
\|u\|_{BV^q(\Omega,\R^d)}:=\Big(\bar
A_{u,q}\big(\Omega\big)\Big)^{\frac{1}{q}}+\|u\|_{L^q(\Omega,\R^d)}.
\end{equation}
Next, given a function $u\in L^q_{loc}(\Omega,\R^d)$ we say that
$u\in BV^q_{loc}(\Omega,\R^d)$ if for every open
$\Omega'\subset\subset\Omega$ we have  $u\in BV^q(\Omega',\R^d)$.

\begin{remark}\label{bhhfgcczzbvq}
By  the ``BBM formula" we have
$BV^1_{loc}(\Omega,\R^d)=BV_{loc}(\Omega,\R^d)$ and in the case of a
domain $\Omega$ with Lipschitz boundary, also
$BV^1(\Omega,\R^d)=BV(\Omega,\R^d)$.
\end{remark}

In our main result, Theorem~\ref{ghgghgghjjkjkzzbvqred}, we prove an
explicit formula for $\hat A_{u,q}\big(\Omega\big)$ when $u\in BV(\Omega,\R^d)\cap
L^\infty(\Omega,\R^d)$. This formula justifies the name we have chosen
 for
the space $BV^q$.
\begin{theorem}\label{ghgghgghjjkjkzzbvqred}
Let $\Omega\subset\R^N$ be an open set with bounded Lipschitz
boundary and let $u\in BV(\Omega,\R^d)\cap L^\infty(\Omega,\R^d)$.
Then, for every $q>1$ we have $u\in {BV}^q(\Omega,\R^d)$ and
\begin{equation}\label{fgyufghfghjgghgjkhkkGHGHKKjjjjkjkkjkmmlmjijiluuizziihhhjbvqKKred}
\hat
A_{u,q}\big(\Omega\big)=C_N\int_{J_u}\Big|u^+(x)-u^-(x)\Big|^qd\mathcal{H}^{N-1}(x).
\end{equation}
with the dimensional constant $C_N>0$ defined by
\begin{equation}\label{fgyufghfghjgghgjkhkkGHGHKKggkhhjoozzbvqkk}
C_N:=\frac{1}{N}\int_{S^{N-1}}|z_1|d\mathcal{H}^{N-1}(z)\,,
\end{equation}
where we denote $z:=(z_1,\ldots, z_N)\in\R^N$.
\end{theorem}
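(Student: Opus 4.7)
The plan is to reduce the $N$-dimensional integral to a one-parameter family of one-dimensional difference quotients, perform the asymptotic analysis in the $1$D setting via the $BV$ decomposition, and integrate over directions using Fubini and rotational symmetry. I would substitute $y=x+\e z$, pass to polar coordinates $z=r\xi$ with $r\in(0,1)$, $\xi\in S^{N-1}$, and take $s=r$, obtaining
\begin{equation*}
\hat A_{u,q}(\O)=\limsup_{\e\to 0^+}\int_{S^{N-1}}\int_0^1 s^{N-2}\,\frac{\Psi_{\xi,\e}(\e s)}{\e}\,ds\,d\mathcal{H}^{N-1}(\xi),
\end{equation*}
where $\Psi_{\xi,\e}(t):=\int_{\O\cap(\O-t\xi)}|u(x+t\xi)-u(x)|^q\,dx$. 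Slicing in direction $\xi$ (Fubini on hyperplanes orthogonal to $\xi$) expresses $\Psi_{\xi,\e}(t)$ as the integral over $x'\in\xi^\perp$ of $\int_\R|u_{\xi,x'}(s+t)-u_{\xi,x'}(s)|^q\,ds$ (restricted to admissible $s$), where the slices $u_{\xi,x'}(s):=u(x'+s\xi)$ are $1$D $BV$ functions for $\mathcal{H}^{N-1}$-a.e.\ $x'$.

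The key step is the $1$D asymptotic: for a compactly supported, bounded $BV$ function $f:\R\to\R^d$ and $q>1$,
\begin{equation*}
\lim_{t\to 0^+}\frac{1}{t}\int_\R|f(s+t)-f(s)|^q\,ds=\sum_{s_k\in J_f}|f(s_k^+)-f(s_k^-)|^q.
\end{equation*}
Decomposing $Df=(f')\mathcal{L}^1+D^cf+D^jf$: Jensen gives $|f^{ac}(s+t)-f^{ac}(s)|^q\le t^{q-1}\int_s^{s+t}|f'|^q$, so the absolutely continuous part contributes $O(t^{q-1})$; the jump part yields the desired sum plus $o(t)$ by isolating each jump $s_k$ and noting that $|f(s+t)-f(s)|\to|J_k|$ uniformly on $s\in(s_k-t,s_k)$ as $t\to 0^+$; the Cantor part satisfies
\begin{equation*}
\frac{1}{t}\int_\R|f^c(s+t)-f^c(s)|^q\,ds\le\omega_{f^c}(t)^{q-1}\,|D^cf|(\R)\longrightarrow 0,
\end{equation*}
where $\omega_{f^c}$ is the modulus of continuity of the continuous, compactly supported function $f^c$. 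Combining with the standard $BV$ slicing identity $\int_{\xi^\perp}\sum_{s_k\in J_{u_{\xi,x'}}}g(x'+s_k\xi)\,d\mathcal{H}^{N-1}(x')=\int_{J_u}g(y)|\nu_u(y)\cdot\xi|\,d\mathcal{H}^{N-1}(y)$ gives $\Psi_{\xi,\e}(t)/t\to G(\xi):=\int_{J_u}|u^+-u^-|^q\,|\nu_u\cdot\xi|\,d\mathcal{H}^{N-1}$ as $t\to 0^+$ for every $\xi\in S^{N-1}$. The uniform bound $\Psi_{\xi,\e}(t)/t\le(2\|u\|_\infty)^{q-1}|Du|(\R^N)$, coming from $|f(s+t)-f(s)|^{q-1}\le(2\|f\|_\infty)^{q-1}$ and $\int|f(s+t)-f(s)|\,ds\le t|Df|(\R)$, then permits dominated convergence in the subsequent integrations over $x'$, $s$ and $\xi$.

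Passing to the limit, the $s$-integration produces $\int_0^1 s^{N-1}\,ds=1/N$, and Fubini yields
\begin{equation*}
\hat A_{u,q}(\O)=\frac{1}{N}\int_{J_u}|u^+-u^-|^q\left(\int_{S^{N-1}}|\nu_u(y)\cdot\xi|\,d\mathcal{H}^{N-1}(\xi)\right)d\mathcal{H}^{N-1}(y),
\end{equation*}
and by rotational symmetry the inner angular integral is independent of $\nu_u(y)\in S^{N-1}$ and equals $\int_{S^{N-1}}|z_1|\,d\mathcal{H}^{N-1}(z)=NC_N$, which is the claimed formula. The main obstacle I anticipate is the boundary of $\O$: since the integral restricts $y\in B_\e(x)\cap\O$, the slicing produces integrals over truncated lines $\O\cap L_{\xi,x'}$ rather than full lines. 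I would address this by choosing a $BV$-extension $\bar u\in BV(\R^N)\cap L^\infty(\R^N)$ whose interior and exterior traces on $\partial\O$ coincide, so that $J_{\bar u}\cap\partial\O$ is $\mathcal{H}^{N-1}$-null and no spurious jumps are introduced, and then estimating the boundary layer $\{x\in\O:\dist(x,\partial\O)<\e\}$ using the Lipschitz regularity of $\partial\O$ together with the $L^\infty$ bound on $u$. A secondary subtlety worth flagging is that the Cantor estimate genuinely exploits $q>1$; for $q=1$ the Cantor part does contribute, which is consistent with the identification $BV^1_{loc}=BV_{loc}$ in Remark~\ref{bhhfgcczzbvq}.
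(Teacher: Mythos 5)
Your proposal is correct in structure but takes a genuinely different route from the paper. The paper deduces Theorem~\ref{ghgghgghjjkjkzzbvqred} from Proposition~\ref{hgugghghhffhfhKKzzbvq}, which works directly in $\R^N$: it mollifies $u$, applies the fundamental theorem of calculus to $W\big(u_\e(x+t\vec k),u_\e(x)\big)$ for a general symmetric potential $W$, integrates by parts to bring the vector measure $Du$ into the picture, and then invokes the fine properties of $BV$ functions (Theorem 3.108 and Remark 3.109 of Ambrosio--Fusco--Pallara on the behaviour of $\bar u(x\pm\rho s\vec k)$ for $\mathcal H^{N-1}$-a.e.\ $x$) together with dominated convergence to identify the limit, finally specialising to $W(a,b)=|a-b|^q$. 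You instead reduce to one-dimensional difference quotients by polar coordinates and Fubini slicing, prove the $1$D limit $\tfrac{1}{t}\int_\R|f(s+t)-f(s)|^q\,ds\to\sum_k|J_k|^q$ via the $1$D $BV$ decomposition, and reassemble with the $BV$-slicing coarea formula for the jump set. Both arguments ultimately rest on the same $1$D structure of $BV$ along lines, but the set-ups are quite distinct: yours is more elementary and makes the role of $q>1$ (killing the absolutely continuous and Cantor contributions) fully transparent; the paper's, by working at the level of the measure $Du$ and a general $W$, avoids explicit manipulation of slices and the attendant Fubini book-keeping, and also yields the formula
\[
\lim_{t\to0^+}\frac{1}{t}\int_K W\big(u(x+t\vec k),u(x)\big)\,dx=\int_{J_u\cap K}W\big(u^+(x),u^-(x)\big)\,\big|\vec k\cdot\vec\nu(x)\big|\,d\mathcal H^{N-1}(x)
\]
as an intermediate result of independent interest.

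One genuine, though fixable, gap: your estimate for the absolutely continuous part, $|f^{ac}(s+t)-f^{ac}(s)|^q\le t^{q-1}\int_s^{s+t}|f'|^q$ (Jensen), presupposes $f'\in L^q_{loc}$, which is not available --- the absolutely continuous derivative of a $BV$ function is only in $L^1$. You should instead apply the same modulus-of-continuity argument you already use for the Cantor part to the whole continuous part $g:=f^{ac}+f^c$: since $g$ is uniformly continuous (continuous, $BV$, and constant outside a compact set, once $u$ is extended with compact support) and $\int_\R|g(s+t)-g(s)|\,ds\le t\,|Dg|(\R)$, one gets
\[
\frac{1}{t}\int_\R|g(s+t)-g(s)|^q\,ds\le \omega_g(t)^{q-1}\,|Dg|(\R)\longrightarrow 0\quad(q>1),
\]
which suffices, and then the $L^q$-triangle inequality isolates the jump part. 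Beyond that, the boundary-layer and extension issues you flag are real and must be carried out carefully: the paper uses a $BV\cap L^\infty$ extension to $\R^N$ with $\|Du\|(\partial\Omega)=0$ and, when $\Omega$ is unbounded, an exhaustion by balls $B_{R_n}$ with $\|Du\|(\partial B_{R_n})=0$; your proposed strategy is essentially the same and sound, but you should note that $\Omega$ is not assumed bounded and that the exhaustion step is therefore not optional.
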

%
%
%
%
%
%
\begin{remark}\label{ghuhghffh}
Note the {\bf big difference} between the case $q>1$ and $q=1$.
Indeed, by \eqref{eq:1jjkjh} for ${BV^1}={BV}$ the analog of
\eqref{fgyufghfghjgghgjkhkkGHGHKKjjjjkjkkjkmmlmjijiluuizziihhhjbvqKKred} is
$$\hat
A_{u,1}\big(\Omega\big)=\mathcal{L}^N(B_1(0))\,K_{1,N}\,\|Du\|(\Omega),$$
that is, for $q=1$ we see the {\bf full} $BV$-seminorm, not just the
``jump part''!
\end{remark}
Our next result deals with functions in $W^{\frac{1}{q},q}$:
\begin{theorem}\label{huyhuyuughhjhhjjhhjjjhhjhjjhhhjzzbvq}
Given an open set $\Omega\subset\R^N$, $q\geq 1$ and a
function $u\in W^{\frac{1}{q},q}(\Omega,\R^d)$ we have $u\in
BV^{q}(\Omega,\R^d)$, and if in addition $q>1$, then
$
\hat
A_{u,q}\big(\Omega\big)=0$. Moreover, the embedding $
W^{\frac{1}{q},q}(\Omega,\R^d)\subset BV^{q}(\Omega,\R^d)$ is
continuous.
\end{theorem}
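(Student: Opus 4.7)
The plan is to reduce everything in the case $q>1$ to a single elementary pointwise comparison with the Gagliardo kernel. Fix $q>1$, $\e\in(0,1)$, and $x,y\in\O$ with $0<|y-x|<\e$; since $\e^N\geq |y-x|^N$, one has
\[\mathbf{1}_{B_\e(x)}(y)\,\frac{1}{\e^N\,|y-x|}\;\leq\;\frac{1}{|y-x|^{N+1}}.\]
Multiplying by $|u(y)-u(x)|^q$ and integrating over $\O\times\O$ yields
\[\int_\O\!\int_{B_\e(x)\cap\O}\frac{1}{\e^N}\,\frac{|u(y)-u(x)|^q}{|y-x|}\,dy\,dx\;\leq\;\int_\O\!\int_\O\frac{|u(y)-u(x)|^q}{|y-x|^{N+1}}\,dy\,dx\;=\;[u]_{W^{1/q,q}(\O,\R^d)}^{\,q}.\]
Taking the supremum over $\e\in(0,1)$ gives $\bar A_{u,q}(\O)\leq [u]_{W^{1/q,q}}^{\,q}$, so $u\in BV^q(\O,\R^d)$, and combining with $\|u\|_{L^q}$ produces the estimate $\|u\|_{BV^q}\leq\|u\|_{L^q}+[u]_{W^{1/q,q}}\leq\|u\|_{W^{1/q,q}}$ for the norm in \er{FfgdfhgdhddfhoGUIgfhfghhjhjhjhjhjyrtrtrrtzzbvq}, which is the continuity of the embedding.

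To derive $\hat A_{u,q}(\O)=0$ for $q>1$, I would apply dominated convergence to
\[f_\e(x,y):=\mathbf{1}_{B_\e(x)}(y)\,\frac{1}{\e^N}\,\frac{|u(y)-u(x)|^q}{|y-x|}.\]
The same comparison from the first step shows $f_\e(x,y)\leq |u(y)-u(x)|^q/|y-x|^{N+1}$, and this dominating function is in $L^1(\O\times\O)$ precisely because $u\in W^{1/q,q}(\O,\R^d)$. For $\mathcal{L}^{2N}$-almost every $(x,y)\in\O\times\O$ we have $x\neq y$, and then $\mathbf{1}_{B_\e(x)}(y)=0$ for every $\e<|y-x|$, so $f_\e\to 0$ pointwise a.e. Dominated convergence then gives $\int_\O\int_\O f_\e\,dy\,dx\to 0$ as $\e\to 0^+$, i.e., $\hat A_{u,q}(\O)=0$.

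The case $q=1$ must be handled separately, because the dominating kernel $1/|y-x|^{N+1}$ is not integrable against the $W^{1,1}$-Gagliardo numerator. The inclusion $W^{1,1}(\O)\subset BV(\O)=BV^1(\O)$ is classical, and for the quantitative bound $\bar A_{u,1}(\O)\leq C_N\|\nabla u\|_{L^1(\O)}$ needed for the continuity of the embedding, the plan is to integrate the fundamental theorem of calculus along segments: for smooth $u$ with $[x,y]\subset\O$, $|u(y)-u(x)|\leq|y-x|\int_0^1|\nabla u(x+t(y-x))|\,dt$; Fubini and the substitution $z=(y-x)/\e$ then convert the inner double integral defining $\bar A_{u,1}(\O)$ into a multiple of $\|\nabla u\|_{L^1(\O)}$ that is uniform in $\e$, and a density argument extends this to all of $W^{1,1}$. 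The main obstacle is the geometric bookkeeping in the $q=1$ step: on a general open $\O$ the segment $[x,y]$ need not lie in $\O$, which I would address by decomposing the integration region into a part where $[x,y]\subset\O$ (treated by the FTC above) and a thin boundary strip $\{x\in\O:\dist(x,\partial\O)<\e\}$ whose contribution is controlled by a Fubini estimate. For $q>1$ the proof is otherwise the clean pointwise comparison plus DCT described above.
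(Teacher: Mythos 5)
Your treatment of the case $q>1$ is exactly the argument the paper uses (its Lemma~\ref{huyhuyuughhjhhjjhhjjjzzbvq}): the pointwise comparison $\mathbf{1}_{B_\e(x)}(y)\,\e^{-N}|y-x|^{-1}\leq |y-x|^{-(N+1)}$ gives $\bar A_{u,q}(\O)\leq\int_\O\int_\O|u(x)-u(y)|^q|x-y|^{-(N+1)}\,dy\,dx$, whence $u\in BV^q(\O)$ and the norm estimate; and dominated convergence (pointwise limit $0$ off the diagonal, dominated by the Gagliardo kernel) gives $\hat A_{u,q}(\O)=0$. No issues there.

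For $q=1$ the paper simply appeals to the fact being ``well known'', whereas you propose an explicit FTC-plus-density argument. You correctly identify the obstruction that the segment $[x,y]$ need not lie in $\O$, but the fix you propose does not close it. Splitting off the strip $S_\e:=\{x\in\O:\dist(x,\partial\O)<\e\}$ is natural (indeed off $S_\e$ the segment argument works since $B_\e(x)\subset\O$), but the contribution of $S_\e$ cannot be handled by a crude Fubini estimate: bounding $|u(y)-u(x)|\leq|u(y)|+|u(x)|$ there yields a term of size $\approx\frac{1}{\e}\int_{S_\e}|u|$, which need not stay bounded as $\e\to0^+$ for a general open $\O$ and general $u\in L^1(\O)$. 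So the proposed decomposition establishes the inclusion $W^{1,1}\subset BV^1$ only for domains for which one can control this strip term (e.g.\ extension/Lipschitz domains, where one can instead extend $u$ to $\R^N$ and argue globally). If you want a self-contained proof of the $q=1$ case matching the stated generality, you need a different idea for the boundary part, and as written the plan leaves a genuine gap there.
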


Next we recall the definition of the Besov Spaces $B_{q,\infty}^s$ with
$s\in(0,1)$:
\begin{definition}\label{gjghghghjgghGHKKhjhjhjhhzzbvqkkl,.,.}
Given $q\geq 1$ and $s\in(0,1)$, we say that $u\in
L^q(\mathbb{R}^N,\R^d)$ belongs to the Besov space
$B_{q,\infty}^s(\mathbb{R}^N,\R^d)$ if
\begin{equation}\label{gjhgjhfghffh}
\sup\limits_{\rho\in(0,\infty)}\Bigg(\sup_{|h|\leq\rho}\int_{\mathbb{R}^N}
\frac{|u(x+h)-u(x)\big|^q}{\rho^{sq}}dx\Bigg)<\infty.
\end{equation}
Moreover, for every open $\Omega\subset\R^N$ we say that $u\in
L^q_{loc}(\Omega,\R^d)$ belongs to Besov space
$\big(B_{q,\infty}^s\big)_{loc}(\Omega,\R^d)$ if for every compact
$K\subset\subset\Omega$ there exists $u_K\in
B_{q,\infty}^s(\mathbb{R}^N,\R^d)$ such that $u_K(x)= u(x)$ for
every $x\in K$.
\end{definition}
The next result clarifies the relation between the space $BV^q$ and Besov spaces:
\begin{proposition}\label{ghgghgghzzbvqhjhjh}
For $q>1$ we have:
\begin{equation}\label{hjhjjggjklkl}
BV^q(\R^N,\R^d)=B_{q,\infty}^{1/q
}(\R^N,\R^d).
\end{equation}
Moreover for every open $\Omega\subset\R^N$ and $q>1$ we have:
\begin{equation}\label{hjhjjggjjkjk}
BV^q_{loc}(\Omega,\R^d)=\big(B_{q,\infty}^{ 1/q
}\big)_{loc}(\Omega,\R^d).
\end{equation}
\end{proposition}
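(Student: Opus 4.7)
The plan is to reduce both conditions to statements about the single-variable function
\begin{equation*}
g(h) := \int_{\R^N} \big|u(x+h)-u(x)\big|^q\,dx\,.
\end{equation*}
A direct check from the Besov definition shows that $u\in B^{1/q}_{q,\infty}(\R^N,\R^d)$ if and only if $M_1(u):=\sup_{h\neq 0} g(h)/|h|$ is finite: taking $\rho=|h|$ in \er{gjhgjhfghffh} gives one direction, while $g(h)\leq M_1(u)|h|$ forces $g(h)/\rho\leq M_1(u)$ whenever $|h|\leq \rho$. By Fubini and the substitution $y=\e z$,
\begin{equation*}
\bar A_{u,q}(\R^N)=\sup_{\e\in(0,1)}\int_{B_1(0)}\frac{g(\e z)}{\e\,|z|}\,dz\,,
\end{equation*}
so \er{hjhjjggjklkl} reduces to the equivalence $M_1(u)<\infty \Leftrightarrow \bar A_{u,q}(\R^N)<\infty$.

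The inclusion $B^{1/q}_{q,\infty}\subset BV^q$ is immediate: if $M_1(u)<\infty$, then $g(\e z)/(\e|z|)\leq M_1(u)$ on $B_1(0)$ uniformly in $\e$, so $\bar A_{u,q}(\R^N)\leq M_1(u)\,\mathcal{L}^N(B_1(0))$. The converse is the heart of the proof and rests on the sub-additivity
\begin{equation*}
g(h_1+h_2)\leq 2^{q-1}\bigl(g(h_1)+g(h_2)\bigr),
\end{equation*}
which follows from $(a+b)^q\leq 2^{q-1}(a^q+b^q)$ and translation invariance of $\|\cdot\|_{L^q}$. I would fix $\vec k\in S^{N-1}$, $\alpha\in(0,1)$, apply sub-additivity with $h_1=\alpha(\vec k-z)$, $h_2=\alpha z$, and average $z$ over the ball $B_{1/4}(\vec k/2)$. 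This ball has two key features: it lies inside $B_1(0)\cap\{|z|\geq 1/4\}$, keeping the weight $1/|z|$ harmless, and it is invariant under the reflection $z\mapsto \vec k-z$, so the two terms on the right become equal after averaging. The resulting chain
\begin{equation*}
\mathcal{L}^N\bigl(B_{1/4}(0)\bigr)\,g(\alpha \vec k)\leq 2^q\int_{B_{1/4}(\vec k/2)} g(\alpha z)\,dz\leq 2^q\int_{B_1(0)}\frac{g(\alpha z)}{|z|}\,dz\leq 2^q\,\alpha\,\bar A_{u,q}(\R^N),
\end{equation*}
where the middle inequality uses $1\leq 1/|z|$ on $B_1(0)$, yields $g(h)\leq C(N,q)\,\bar A_{u,q}(\R^N)\,|h|$ for $|h|<1$. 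The range $|h|\geq 1$ is handled by $g(h)\leq 2^q\|u\|_{L^q}^q\leq 2^q\|u\|_{L^q}^q\,|h|$, and together these give $M_1(u)<\infty$.

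For the local statement \er{hjhjjggjjkjk} I would use a cut-off. Given a compact $K\subset \Omega$ and $u\in BV^q_{loc}(\Omega)$, pick open $\Omega'$ with $K\subset\subset \Omega'\subset\subset \Omega$ and $\eta\in C^\infty_c(\Omega')$ equal to $1$ near $K$; let $\tilde u$ denote $u$ extended by $0$ outside $\Omega'$ and set $v:=\eta\,\tilde u$. I would prove $v\in B^{1/q}_{q,\infty}(\R^N)$ directly by bounding $g_v(h)\leq C|h|$ (rather than estimating $\bar A_{v,q}(\R^N)$, which is delicate when $N=1$ because $1/|z|$ fails to be integrable on $B_1(0)$). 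Splitting $v(x+h)-v(x)=\eta(x+h)(\tilde u(x+h)-\tilde u(x))+\tilde u(x)(\eta(x+h)-\eta(x))$ and setting $d_0:=\dist(\supp\eta,\partial\Omega')/3$: for $|h|<d_0$ all relevant shifts $x,\,x+h,\,x+\alpha z$ (with $z\in B_{1/4}(\vec k/2)$) stay inside $\Omega'$, so a \emph{localized} version of the averaging argument above bounds the first contribution by $C\|\eta\|_\infty^q\,\bar A_{u,q}(\Omega')\,|h|$; the second contribution is $\leq C\|\nabla\eta\|_\infty^q\|\tilde u\|_{L^q}^q|h|^q\leq C''|h|$ for $|h|\leq 1$, and for $|h|\geq\min(1,d_0)$ the trivial bound $g_v(h)\leq 2^q\|v\|_{L^q}^q$ closes the estimate. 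Hence $v\in B^{1/q}_{q,\infty}(\R^N)$, and since $v=u$ on $K$ we conclude $u\in(B^{1/q}_{q,\infty})_{loc}(\Omega)$. The reverse inclusion is easier: any Besov extension $u_K$ agreeing with $u$ on $\overline{\Omega'}$ satisfies $\bar A_{u,q}(\Omega')\leq \bar A_{u_K,q}(\R^N)<\infty$. I expect the main obstacle to be the hard inclusion of the global statement, specifically the choice of averaging region that simultaneously symmetrises the two sub-additivity contributions and avoids the singularity of $1/|z|$.
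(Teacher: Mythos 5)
Your proposal is correct and follows essentially the same route as the paper. The paper proves the proposition via Lemma~\ref{hjhhjKKzzbvq}, whose key inequality \er{gghgjhfgggjfgfhughGHGHKKzzjkjkyuyuybvq} rests on exactly the two ingredients you identify: the sub-additivity $g(h_1+h_2)\leq 2^{q-1}\bigl(g(h_1)+g(h_2)\bigr)$ (Lemma~\ref{gughfgfhfgdgddffddfKKzzbvq}) and an average of $z$ over a ball centered at $\vec k/2$ that is invariant under $z\mapsto\vec k-z$; the paper uses $B_{|\vec k|/2}(\vec k/2)$ and keeps the weight $1/|z|$ throughout, while you use the smaller $B_{1/4}(\vec k/2)$ to stay away from the singularity and drop the weight via $1\leq 1/|z|$ — a cosmetic variation that only costs a constant. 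Your packaging (working with $M_1(u)=\sup_{h\neq 0} g(h)/|h|$ directly) collapses the paper's two-step reduction through $\hat B_{u,q}$ and Lemma~\ref{hjgjg}, and your cut-off argument for the local statement is the same one used in the proof of Lemma~\ref{hjgjg} in Appendix~\ref{AppB}.
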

We should mention that \eqref{hjhjjggjklkl} of Proposition
\ref{ghgghgghzzbvqhjhjh} can be deduced from a more general result,
obtained independent by Brasseur in \cite{Br}, that characterizes the
Besov spaces $B_{p,\infty}^s(\R^N)$ via a BBM-type formula, for all
values of $s\in(0,1)$ and $p\in[1,\infty)$.

\begin{remark}
Similar results hold also for more general mollifiers than in \eqref{hgghg}, e.g., of the form
$\rho_\e(x)=\frac{\rho(|x|/\e)}{\e^N}$, where $\rho(t)$ is a nonnegative
function on $[0,\infty)$ with compact support, such that
$\essinf_{(0,\delta)} \rho\geq \alpha$ for some $\alpha,\delta>0$ and  $\int_{\R^N}\rho(|x|)\,dx=1$. We
did not investigate more general families of radial mollifiers
$\{\rho_\e(x)\}$ as in \cite{hhh1}.
\end{remark}

In \cite{BBM3} Bourgain, Brezis and Mironescu introduced a new
space, that they called $B$, that contains the spaces $BV$,
$W^{\frac{1}{q},q}$ for $q\geq 1$ and $BMO$. Moreover, they
introduced a proper subspace $B_0\subsetneq B$, such that $B_0$
contains $W^{\frac{1}{q},q}$ for $q\geq 1$ as well as $VMO$. For every
$u\in B$ they defined the seminorm $|u|_{B}$
and its infinitesimal version $[u](\Omega)$. The precise definitions
are given bellow in Definition~\ref{gvyfhgfhfffbvq}. Our next result
deals with  the relations between the  $BV^q$ spaces and the spaces $B$
and $B_0$:
\begin{theorem}\label{bhugggyffyfbvq}
Let $\Omega\subset\R^N$ be an open set and $q\geq 1$. Then for every
$u\in BV^q(\Omega,\R^d)$ we have
\begin{equation}\label{jghjghjgghghghghjhkhhhihyuhyiibvquiiu}
|u|_{B(\Omega,\R^d)}\leq N^{\frac{N+1}{2q}}\big(\bar
A_{u,q}(\Omega)\big)^{\frac{1}{q}}.
\end{equation}
and
\begin{equation}\label{jghjghjgghghghghjhkhhhihyuhyiibvq}
[u](\Omega)\leq N^{\frac{N+1}{2q}}\big(\hat
A_{u,q}(\Omega)\big)^{\frac{1}{q}}.
\end{equation}
Moreover, if in addition $\mathcal{L}^N(\Omega)<\infty$ then
$BV^q(\Omega,\R^d)\subset B(\Omega,\R^d)$ with continuous embedding.
In particular, if $\hat A_{u,q}(\Omega)=0$ then $u\in
B_0(\Omega,\R^d)$.
\end{theorem}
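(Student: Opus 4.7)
The two inequalities \eqref{jghjghjgghghghghjhkhhhihyuhyiibvquiiu} and \eqref{jghjghjgghghghghjhkhhhihyuhyiibvq} are of the same nature: the BBM $B$-seminorm and its infinitesimal version $[u](\Omega)$ (as in Definition \ref{gvyfhgfhfffbvq}) are defined by double integrals containing $|u(x)-u(y)|$ to the first power, whereas $\bar A_{u,q}(\Omega)$ and $\hat A_{u,q}(\Omega)$ contain this quantity to the $q$-th power. The natural approach is therefore to apply H\"older's inequality with conjugate exponents $q$ and $q'=q/(q-1)$ in order to pass from one expression to the other.

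Concretely, I would write
\begin{equation*}
|u(x)-u(y)|=\frac{|u(x)-u(y)|}{\bigl(\e^N|y-x|\bigr)^{1/q}}\cdot\bigl(\e^N|y-x|\bigr)^{1/q},
\end{equation*}
and apply H\"older to the double integral $\int_\Omega\!\int_{B_\e(x)\cap\Omega}\cdots dy\,dx$ that defines $|u|_{B(\Omega,\R^d)}$ (respectively $[u](\Omega)$). The first factor integrates to at most $\bigl(\bar A_{u,q}(\Omega)\bigr)^{1/q}$, and after passage to the limsup to $\bigl(\hat A_{u,q}(\Omega)\bigr)^{1/q}$. The second factor reduces to a deterministic radial integral on $B_\e(x)\cap\Omega\subset B_\e(x)$ that can be computed in polar coordinates. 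The powers of $\e$ in this factorization are chosen so that after combining with the weight intrinsic to the definition of the $B$-seminorm, the $\e$-dependence cancels and only a dimensional constant remains. A careful bookkeeping of this dimensional constant should yield precisely $N^{(N+1)/(2q)}$.

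For the continuous embedding $BV^q(\Omega,\R^d)\hookrightarrow B(\Omega,\R^d)$ in the case $\mathcal{L}^N(\Omega)<\infty$, I would combine \eqref{jghjghjgghghghghjhkhhhihyuhyiibvquiiu} with the standard H\"older inequality $\|u\|_{L^1(\Omega)}\leq\mathcal{L}^N(\Omega)^{1-1/q}\|u\|_{L^q(\Omega)}$, so that both the seminorm part and the $L^1$-part of the $B(\Omega,\R^d)$-norm are controlled by the $BV^q(\Omega,\R^d)$-norm \eqref{FfgdfhgdhddfhoGUIgfhfghhjhjhjhjhjyrtrtrrtzzbvq}. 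For the last assertion, $B_0$ is characterized as the subspace of $B$ consisting of functions with vanishing infinitesimal seminorm $[u](\Omega)=0$; then \eqref{jghjghjgghghghghjhkhhhihyuhyiibvq} immediately gives $[u](\Omega)=0$ whenever $\hat A_{u,q}(\Omega)=0$, hence $u\in B_0(\Omega,\R^d)$.

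The main obstacle I expect is the geometric bookkeeping that matches precisely the $\e$- and $|y-x|$-weights appearing in the paper's definition of $|u|_B$ with those in $\bar A_{u,q}$, and yields exactly the constant $N^{(N+1)/(2q)}$ rather than some equivalent but dimensionally different quantity. If the $B$-seminorm is expressed via a supremum over directions or dyadic scales, the H\"older argument may need to be carried out scale-by-scale (or direction-by-direction) and followed by a supremum, which is presumably where the somewhat unusual factor $N^{(N+1)/2}$ in the exponent enters.
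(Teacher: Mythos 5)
Your high-level strategy (Hölder from the $q$-th power to the first power, the $L^1$/$L^q$ comparison via $\mathcal{L}^N(\Omega)<\infty$, and reading the $B_0$ claim off the characterization $[u](\Omega)=0$) is correct, and the last two paragraphs of your proposal match the paper's argument. However, there is a genuine gap in the central inequality. The seminorm $|u|_{B(\Omega,\R^d)}$ is \emph{not} defined by a double integral $\int_\Omega\int_{B_\e(x)\cap\Omega}\cdots\,dy\,dx$; by Definition~\ref{gvyfhgfhfffbvq}, $[u]_\e(\Omega)$ is a supremum over families $\{Q_\e(x_j)\}_{j=1}^m\in\mathcal{R}_\e(\Omega)$ of at most $m\leq\e^{-(N-1)}$ disjoint $\e$-cubes contained in $\Omega$, of a \emph{sum} of normalized double integrals over each cube separately. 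So the single Hölder application you propose is aimed at the wrong object, and the vague comment at the end about a ``supremum over directions or dyadic scales'' does not reflect the actual cube structure.

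The paper's Lemma~\ref{guytuttbvq} handles this in three steps that you would need to supply. First, on each cube $Q_\e(x_j)$, apply Jensen/Hölder to the normalized double integral $\big(\mathcal{L}^N(Q_\e(x_j))\big)^{-2}\iint_{Q_\e(x_j)^2}|u(y)-u(x)|\,dy\,dx$ to bring in the $q$-th power, and then use $|y-x|\leq\e\sqrt{N}$ on $Q_\e(x_j)^2$ to insert the weight $1/|y-x|$ at the cost of a factor $\sqrt{N}$. Second, apply Hölder to the \emph{finite sum} over $j=1,\dots,m$, which produces a factor $m^{1-1/q}$; this is precisely where the constraint $m\leq\e^{-(N-1)}$ built into $\mathcal{R}_\e(\Omega)$ is essential, since it makes the $\e$-powers cancel. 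Without this cap (which your proposal never uses), the argument would fail for large $\Omega$. Third, since $Q_\e(x_j)\subset B_{\e\sqrt{N}}(x)\cap\Omega$ for $x\in Q_\e(x_j)$, replace the inner integral over the cube by one over the ball and use disjointness of the cubes to sum up into $\bar A_{u,q}(\Omega)$ (evaluated at $\e'=\e\sqrt{N}$). The factor $N^{(N+1)/(2q)}$ is then exactly the product of $N^{1/(2q)}$ from the diameter bound and $N^{N/(2q)}$ from the volume rescaling $\e^N\mapsto(\e\sqrt{N})^N$ — it is not the output of a single ``polar coordinate'' computation as your sketch suggests.
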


We now turn to the role  of  $BV^q$-spaces in the study of  singular
perturbation problems.  In various applications one is led to study
the $\Gamma$-limit, as $\varepsilon\to 0^+$, of the Aviles-Giga
functional $I^{(2)}_\varepsilon$, defined for scalar functions
$\psi$ by
\begin{equation}
\label{b5..}
I^{(2)}_\varepsilon(\psi):=\int_\Omega\left\{\varepsilon|\nabla^2
\psi|^2+\frac{1}{\varepsilon}\Big(1-|\nabla
\psi|^2\Big)^2\right\}dx\quad
\quad\text{(see \cite{adm,ag1,ag2})}.
\end{equation}
Here $\Omega\subset\R^N$ is a bounded domain.
A generalization of \eqref{b5..}  to any  $p>1$ is:
\begin{equation}\label{bjhgjh}
I^{(p)}_\varepsilon(\psi):=\int_\Omega\bigg(\varepsilon^{p-1}|\nabla^2
\psi|^p+\frac{1}{\varepsilon}\Big(1-|\nabla
\psi|^2\Big)^{\frac{p}{p-1}}\bigg)=\int_\Omega\frac{1}{\varepsilon}\bigg(\big|\varepsilon\nabla^2
\psi\big|^p+\Big(1-|\nabla \psi|^2\Big)^{\frac{p}{p-1}}\bigg).
\end{equation}

It is clear that the functional $(\Gamma-\limsup_{\varepsilon\to
0^+}I^{(p)}_\varepsilon)(\psi)$, calculated in the strong $W^{1,q}$
topology, can be finite only if
$|\nabla\psi|^2=1\;\,\text{for a.e.}\;\,x\in\Omega$,
i.e., if we define:
\begin{equation}
\begin{aligned}
\label{cuyfyugugghvjjhh}
\mathcal{A}_{0}:=\mathcal{A}_{0}(\Omega,q)&:=\Big\{\psi\in
W^{1,q}(\Omega)\,:\; \big|\nabla\psi(x)\big|^2=1\;\,\text{for
a.e.}\,\;x\in\Omega\Big\},
\\
\mathcal{A}:=\mathcal{A}(\Omega,q)&:=\Big\{\psi\in
W^{1,q}(\Omega)\,:\; (\Gamma-\limsup_{\varepsilon\to 0^+}
I^{(p)}_\varepsilon\big|_\Omega)(\psi)<\infty\Big\},
\end{aligned}
\end{equation}
then clearly $\mathcal{A}\subset\mathcal{A}_0$. Note that the set
$\mathcal{A}$ consists of functions with discontinuous gradients.
The natural space of discontinuous functions is $BV$ space. It turns
out that we have
$\mathcal{A}_{BV}\subset\mathcal{A}\subset\mathcal{A}_0$, where
$\mathcal{A}_{BV}:=\mathcal{A}_0\cap \{\psi:\,\nabla\psi\in BV\}$.
However, Ambrosio, De Lellis and Mantegazza showed in \cite{adm}
that $\mathcal{A}_{BV}\subsetneq\mathcal{A}$ in the special  case
of the energy \eqref{b5..} when $N=2$. On the other hand, as
 shown by Camillo de Lellis and Felix Otto in \cite{CDFO}, for the
energy \eqref{b5..} the set $\mathcal{A}$ is contained in a certain
space of functions that still inherits some good geometric measure
theoretical properties of $BV$ space.

A lower bound for \eqref{b5..} when $N=2$ was found
 by Aviles and Giga in \cite{ag2},
by Jin and Kohn in \cite{jin},
  and by
Ambrosio, De Lellis and Mantegazza in \cite{adm}. A matched upper
bound, in the case $\nabla \psi\in BV$, was found independently by
Conti and De Lellis~\cite{CdL} and Poliakovsky~\cite{pol}. These
results imply that for the particular case $\nabla \psi\in BV$ and
$N=2$, the $\Gamma$-limit functional of \eqref{b5..}, calculated in
the strong $W^{1,q}$-topology, is
\begin{equation}\label{hfighfighfihhjjhhjlkjj}
\tilde I_{0}(\psi):=
\begin{cases}
\frac{1}{3}\int_{J_{\nabla \psi}}\big|\nabla \psi^+(x)-\nabla
\psi^-(x)\big|^3\,d\mathcal{H}^{N-1}(x)\quad\quad\text{if }\;|\nabla
\psi|=1\;\;\text{a.e. in }\;\Omega
\\+\infty\quad\quad\text{otherwise}.
\end{cases}
\end{equation}
This results can also be generalized to show that, up to a
multiplicative constant, the energy \er{hfighfighfihhjjhhjlkjj} is
also the $\Gamma$-limit of functional \eqref{bjhgjh}.
Indeed, the
lower bound for \eqref{bjhgjh} can be obtained analogously to that
for \eqref{b5..}, using H\"{o}lder inequality instead of
Cauchy-Schwarz and the matched upper bound can be obtained as a
special  case of a more general result, obtained in \cite{polgen}.
However,
as we already mentioned,
we have $\mathcal{A}_{BV}\neq\mathcal{A}$ for problem \eqref{b5..}
and thus the question of the value of the $\Gamma$-limit  in the
case $\nabla \psi\notin BV$ is still open.
\par We also recall that De Lellis showed in \cite{dl} that for $N=3$ and
$\nabla \psi\in BV$, the functional \er{hfighfighfihhjjhhjlkjj} is
not lower semicontinuous in the $L^1$-topology and thus cannot by
the $\Gamma$-limit of \eqref{b5..}.

In the particular case of the functional \er{b5..} with $N=2$ we
propose here  a candidate for the set $\mathcal{A}$, namely the set
$\big\{\psi:\Omega\to\R:\,\,\nabla \psi\in BV^3,\;|\nabla
\psi|=1\big\}$ (where $BV^3$ is the case $q=3$ of the space $BV^q$).
Indeed, by Theorem \ref{ghgghgghjjkjkzzbvqred} and
\eqref{hfighfighfihhjjhhjlkjj}, when $N=2$, $|\nabla\psi|=1$ and
$\nabla \psi\in BV$, the $\Gamma$-limit of  the functional \er{b5..}
equals $\big(\frac{1}{3C_3}\big) \hat A_{\nabla
\psi,3}\big(\Omega\big)
$.
Therefore, it is natural to conjecture that
$\big(\frac{1}{3C_3}\big) \hat A_{\nabla \psi,3}\big(\Omega\big)$ is
the $\Gamma$-limit also in the case $\nabla \psi\notin BV$, and more
specifically that $\mathcal{A}=\big\{\psi:\Omega\to\R:\,\,\nabla
\psi\in BV^3,\;|\nabla \psi|=1\big\}$. We have an analogous
conjecture  for the functional \er{bjhgjh}, with a different
constant multiplying $\hat A_{\nabla \psi,3}\big(\Omega\big)$. An
additional suport for this conjecture is provided by the fact that
the example constructed by Ambrosio, De Lellis and Mantegazza in
\cite{adm}, of a function $\psi\in
\mathcal{A}\setminus\mathcal{A}_{BV}$,
turns out to satisfy $\psi\in BV^3$ (as it can be easily verified).

Our next result provides a (non-sharp) upper bound for a more
general energy than the one in \er{bjhgjh}:
\begin{theorem}\label{hgughgfhfzzbvq}
Given an open set $\Omega\subset\R^N$, let
$\Omega_0\subset\subset\Omega$ be a compactly embedded open subset
and $\psi\in W^{1,\infty}_{loc}(\Omega,\R)$ be such that
$|\nabla\psi(x)|=1$ for a.e. $x\in\Omega$.
Let $\eta\in C^\infty_c(\R^N,\R)$ be a nonnegative function such
that  $\supp\eta\subset\ov B_1(0)$ and $\int_{\R^N}\eta(z)dz=1$. For
every $x\in\Omega$ and every $0<\e<\dist(x,\partial\Omega)$ define
\begin{equation}\label{jmvnvnbccbvhjhjhhjjkhgjgGHKKjhhjzzbvq}
\psi_\e(x):=\frac{1}{\e^N}\int_{\R^N}\eta\Big(\frac{y-x}{\e}\Big)\psi(y)dy=\int_{\R^N}\eta(z)\psi(x+\e
z)dz.
\end{equation}
Assume in addition that  $\nabla\psi(x)\in BV^q_{loc}(\Omega,\R^N)\cap
BV^p_{loc}(\Omega,\R^N)$ for some $q>1$ and $p\geq 2$. Then we have:
\begin{multline}\label{fgyufghfghjgghgjkhkkhhkggkjgmhkjjuyhghgGHKKvbvbvvhughghojjlkjjjlouokjjzzbvqaajjkljkjjkjkjkjkh}
\limsup_{\e\to
0^+}\Bigg(\int_{\Omega_0}\e^{q-1}\big|\nabla^2\psi_\e(x)\big|^qdx+\frac{1}{\e}\Big(1-\big|\nabla\psi_\e(x)\big|^2\Big)^{\frac{p}{2}}dx\Bigg)\leq\\
\bigg(\int_{\R^N}|z|^{\frac{1}{q-1}}\big|\nabla\eta(z)\big|^{\frac{q}{q-1}}dz\bigg)^{q-1}A_{\nabla\psi,q}(\ov\Omega_0)+\bigg(\int_{\R^N}|z|^{\frac{2}{p-2}}
\big|\eta(z)\big|^{\frac{p}{p-2}}dz\bigg)^{\frac{p-2}{2}}A_{\nabla\psi,p}(\ov\Omega_0),
\end{multline}
where
\begin{equation}\label{hjhjhghgjkghggghGHKKzzbvqkkljljkj}
A_{\nabla\psi,\rho}\big(\ov\Omega_0\big):=\limsup\limits_{\e\to
0^+}\int\limits_{\ov\Omega_0}\int\limits_{B_\e(x)}\frac{1}{\e^N}\,\frac{\big|\nabla\psi(
y)-\nabla\psi(x)\big|^\rho}{|y-x|}dydx\,.
\end{equation}
In particular, if $\nabla\psi(x)\in BV^3_{loc}(\Omega,\R^N)$ then:
\begin{multline}\label{fgyufghfghjgghgjkhkkhhkggkjgmhkjjuyhghgGHKKvbvbvvhughghojjlkjjjlouokjjgghgjgokjjlkzzbvq}
\frac{3}{\sqrt[3]{4}}\,\limsup_{\e\to
0^+}\Bigg(\int_{\Omega_0}\big|\nabla^2\psi_\e(x)\big|\Big|1-\big|\nabla\psi_\e(x)\big|^2\Big|dx\Bigg)\leq\\
\limsup_{\e\to 0^+}\Bigg(\int_{\Omega_0}\e^2\big|\nabla^2\psi_\e(x)\big|^3dx+\int_{\Omega_0}\frac{1}{\e}\Big(1-\big|\nabla\psi_\e(x)\big|^2\Big)^{\frac{3}{2}}dx\Bigg)\leq\\
D_\eta A_{\nabla\psi,3}(\overline\Omega_0)=D_\eta\limsup_{\e\to
0^+}\Bigg(\int_{B_1(0)}
\int_{\Omega_0}\frac{1}{\e|z|}\Big|\nabla\psi(x+\e
z)-\nabla\psi(x)\Big|^3dxdz\Bigg),
\end{multline}
where the constant $D_\eta$ is given by
\begin{equation}\label{jmvnvnbccbvhjhjhhjjkhgjgGHKKjhhjuyugkhhklzzbvq}
D_\eta:=\bigg(\int_{\R^N}|z|^{\frac{1}{2}}\big|\nabla\eta(z)\big|^{\frac{3}{2}}dz\bigg)^{2}+\bigg(\int_{\R^N}|z|^{2}\big|\eta(z)\big|^{3}dz\bigg)^{\frac{1}{2}}.
\end{equation}
\end{theorem}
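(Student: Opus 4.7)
The plan is to estimate the two terms on the left-hand side of \eqref{fgyufghfghjgghgjkhkkhhkggkjgmhkjjuyhghgGHKKvbvbvvhughghojjlkjjjlouokjjzzbvqaajjkljkjjkjkjkjkh} pointwise in $x\in\Omega_0$ and then integrate, the key idea being to manufacture the factor $1/|y-x|$ appearing in the definition of $A_{\nabla\psi,\rho}(\ov\Omega_0)$ through a careful application of H\"older's inequality, and only at the very end to change variables $y=x+\e z$ and pass to $\limsup_{\e\to 0^+}$.

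For the Hessian term I would differentiate \eqref{jmvnvnbccbvhjhjhhjjkhgjgGHKKjhhjzzbvq} under the integral sign, integrate by parts in $z$, and use $\int_{\R^N}\nabla\eta(z)\,dz=0$ (valid since $\eta$ has compact support) to subtract the constant $\nabla\psi(x)$, yielding the representation
$$\nabla^2\psi_\e(x)=-\frac{1}{\e}\int_{\R^N}\nabla\eta(z)\otimes\bigl[\nabla\psi(x+\e z)-\nabla\psi(x)\bigr]\,dz.$$
Factoring the integrand as $\bigl(|z|^{1/q}|\nabla\eta(z)|\bigr)\cdot\bigl(|\nabla\psi(x+\e z)-\nabla\psi(x)|/|z|^{1/q}\bigr)$ and applying H\"older with the conjugate pair $\bigl(q/(q-1),q\bigr)$ then delivers exactly
$$\e^{q-1}|\nabla^2\psi_\e(x)|^q\leq\bigg(\int_{\R^N}|z|^{\frac{1}{q-1}}|\nabla\eta(z)|^{\frac{q}{q-1}}\,dz\bigg)^{q-1}\int_{\R^N}\frac{|\nabla\psi(x+\e z)-\nabla\psi(x)|^q}{\e|z|}\,dz.$$

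For the potential term I would exploit the hypothesis $|\nabla\psi|=1$ a.e.\ through the elementary identity, obtained by expanding the square and using $\int\eta=1$,
$$\int_{\R^N}\eta(z)|\nabla\psi(x+\e z)-\nabla\psi(x)|^2\,dz=\bigl(1-|\nabla\psi_\e(x)|^2\bigr)+|\nabla\psi(x)-\nabla\psi_\e(x)|^2.$$
Discarding the non-negative last term gives $0\leq 1-|\nabla\psi_\e(x)|^2\leq\int\eta(z)|\nabla\psi(x+\e z)-\nabla\psi(x)|^2\,dz$; raising to the power $p/2\geq 1$ and applying H\"older to the right-hand side with the conjugate pair $\bigl(p/(p-2),p/2\bigr)$ and the factorization $\eta(z)g(z)^2=\bigl(\eta(z)|z|^{2/p}\bigr)\cdot\bigl(g(z)^2/|z|^{2/p}\bigr)$, with $g(z)=|\nabla\psi(x+\e z)-\nabla\psi(x)|$, yields
$$\frac{1}{\e}\bigl(1-|\nabla\psi_\e(x)|^2\bigr)^{\frac{p}{2}}\leq\bigg(\int_{\R^N}|z|^{\frac{2}{p-2}}\eta(z)^{\frac{p}{p-2}}\,dz\bigg)^{\frac{p-2}{2}}\int_{\R^N}\frac{|\nabla\psi(x+\e z)-\nabla\psi(x)|^p}{\e|z|}\,dz.$$
The degenerate boundary value $p=2$ is handled separately using $\eta\leq\|\eta\|_\infty$ and $|z|\leq 1$ on $\supp\eta$. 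Integrating both pointwise estimates over $\Omega_0$, performing the substitution $y=x+\e z$ (admissible for $\e<\dist(\Omega_0,\partial\Omega)$) and taking $\limsup_{\e\to 0^+}$ then converts the inner integrals precisely into $A_{\nabla\psi,q}(\ov\Omega_0)$ and $A_{\nabla\psi,p}(\ov\Omega_0)$, which proves \eqref{fgyufghfghjgghgjkhkkhhkggkjgmhkjjuyhghgGHKKvbvbvvhughghojjlkjjjlouokjjzzbvqaajjkljkjjkjkjkjkh}.

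The specialization \eqref{fgyufghfghjgghgjkhkkhhkggkjgmhkjjuyhghgGHKKvbvbvvhughghojjlkjjjlouokjjgghgjgokjjlkzzbvq} to the Aviles--Giga case $q=p=3$ follows from the general estimate combined with the optimized Young inequality $XY\leq\tfrac{\lambda^3}{3}X^3+\tfrac{2}{3}\lambda^{-3/2}Y^{3/2}$ applied to $X=\e^{2/3}|\nabla^2\psi_\e(x)|$ and $Y=\e^{-2/3}\bigl|1-|\nabla\psi_\e(x)|^2\bigr|$ with the optimal choice $\lambda=2^{2/9}$, which produces exactly the sharp constant $3/\sqrt[3]{4}$ on the left. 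The principal technical obstacle is the rigorous handling of the change of variable near the boundary of $\Omega_0$: one must ensure $x+\e z\in\Omega$ throughout so that $\nabla\psi(x+\e z)$ is well-defined, and the domain of $x$-integration after translation has to be controlled inside an arbitrarily small neighbourhood of $\ov\Omega_0$---this is precisely why $A_{\nabla\psi,\rho}$ is evaluated at $\ov\Omega_0$, rather than at $\Omega_0$, in the statement.
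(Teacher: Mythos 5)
Your proposal is correct and follows essentially the same route as the paper's own proof (Lemma~\ref{hgughgfhfzzbvqaa} plus the Young-inequality step in the proof of Theorem~\ref{hgughgfhfzzbvq}): the subtraction of $\nabla\psi(x)$ via $\int\nabla\eta=0$ followed by H\"older with exponents $(q/(q-1),q)$ for the Hessian term, the variance identity exploiting $|\nabla\psi|=1$ followed by H\"older with exponents $(p/(p-2),p/2)$ for the potential term, and the scaled Young inequality $\tfrac{3}{\sqrt[3]{4}}XY\le X^3+Y^{3/2}$ for the final constant. Your explicit remark about the degenerate case $p=2$ and the need to keep $x+\e z\in\Omega$ (which is why $A_{\nabla\psi,\rho}$ is taken over $\ov\Omega_0$) is a nice clarification that the paper leaves implicit.
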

As a direct consequence of the last Theorem we extend the previously
known result about the boundedness of the
$\Gamma-\limsup$ for the energy in \er{bjhgjh} when $p=3$ from the
case $\nabla\psi\in BV$ (see \cite{polgen}) to the case
$\nabla\psi\in BV^3$:
\begin{corollary}\label{ghgghg}
Given an open set $\Omega\subset\R^N$, let $\psi\in
W^{1,\infty}_{loc}(\Omega,\R)$ be such that $|\nabla\psi(x)|=1$ for
a.e. $x\in\Omega$ and $\nabla\psi(x)\in BV^3_{loc}(\Omega,\R^N)$.
Then, for every compactly embedded open subset
$\Omega'\subset\subset\Omega$ and every $q\geq 1$ we have, $\psi\in
\mathcal{A}(\Omega',q)$, with $\mathcal{A}(\Omega',q)$ given by
\begin{equation}
\mathcal{A}(\Omega',q):=\Big\{\psi\in W^{1,q}(\Omega')\,:\,
(\Gamma-\limsup_{\varepsilon\to 0^+}
I^{(3)}_\varepsilon\big|_{\Omega'})(\psi)<+\infty\;
(\text{calculated in the
$W^{1,q}$ topology})\Big\},
\end{equation}
where $I^{(3)}_\e$ is given by \er{bjhgjh} with $p=3$. Moreover, we
have
\begin{equation}\label{bhghgghg}
(\Gamma-\limsup_{\varepsilon\to 0^+}
I^{(3)}_\varepsilon\big|_{\Omega'})(\psi)\leq C
A_{\nabla\psi,3}(\overline\Omega'),
\end{equation}
for some constant $C>0$.
\end{corollary}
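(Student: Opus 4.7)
The plan is to use the mollified family $\{\psi_\e\}$ defined in \eqref{jmvnvnbccbvhjhjhhjjkhgjgGHKKjhhjzzbvq} as the recovery sequence for $\psi$ in the $W^{1,q}$ topology, and to invoke Theorem~\ref{hgughgfhfzzbvq} with $\Omega_0:=\Omega'$ and the exponents $p=q=3$. Since by hypothesis $\nabla\psi\in BV^3_{loc}(\Omega,\R^N)$, the regularity requirement $\nabla\psi\in BV^q_{loc}\cap BV^p_{loc}$ of that theorem holds automatically. The crucial observation is that, with these choices, the expression on the left-hand side of \eqref{fgyufghfghjgghgjkhkkhhkggkjgmhkjjuyhghgGHKKvbvbvvhughghojjlkjjjlouokjjgghgjgokjjlkzzbvq} is exactly $I^{(3)}_\e\big|_{\Omega'}(\psi_\e)$, so the theorem already delivers the desired energy bound, with constant $C:=D_\eta$.

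The first ingredient to verify is that $\psi_\e\to\psi$ in $W^{1,q}(\Omega')$ for every $q\in[1,\infty)$. Since $\psi\in W^{1,\infty}_{loc}(\Omega)$ and $\Omega'\subset\subset\Omega$, one has $\psi,\nabla\psi\in L^\infty(\Omega')\subset L^q(\Omega')$, and $\psi_\e$ is well defined on $\Omega'$ as soon as $\e<\dist(\Omega',\partial\Omega)$. Standard properties of mollification then give $\psi_\e\to\psi$ and $\nabla\psi_\e\to\nabla\psi$ in $L^q(\Omega')$, so that $\{\psi_\e\}$ is admissible as a recovery family for computing $(\Gamma\text{-}\limsup_{\e\to 0^+}I^{(3)}_\e|_{\Omega'})(\psi)$ in the $W^{1,q}$ topology.

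The second ingredient is the finiteness of $A_{\nabla\psi,3}(\overline{\Omega'})$. Pick an auxiliary open set $\Omega''$ with $\overline{\Omega'}\subset\Omega''\subset\subset\Omega$. Since $\nabla\psi\in BV^3_{loc}(\Omega,\R^N)$ we have $\nabla\psi\in BV^3(\Omega'')$, hence $\hat A_{\nabla\psi,3}(\Omega'')<\infty$. For every $\e<\dist(\overline{\Omega'},\partial\Omega'')$ and every $x\in\overline{\Omega'}$ the ball $B_\e(x)$ is contained in $\Omega''$, so
\[
\int_{\overline{\Omega'}}\int_{B_\e(x)}\frac{|\nabla\psi(y)-\nabla\psi(x)|^3}{\e^N\,|y-x|}\,dy\,dx\;\leq\;\int_{\Omega''}\int_{B_\e(x)\cap\Omega''}\frac{|\nabla\psi(y)-\nabla\psi(x)|^3}{\e^N\,|y-x|}\,dy\,dx\,,
\]
and passing to the $\limsup$ yields $A_{\nabla\psi,3}(\overline{\Omega'})\leq \hat A_{\nabla\psi,3}(\Omega'')<\infty$. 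Combining this estimate with the conclusion of Theorem~\ref{hgughgfhfzzbvq} produces both the finiteness of the $\Gamma$-$\limsup$ (hence $\psi\in\mathcal{A}(\Omega',q)$) and the quantitative bound \eqref{bhghgghg} with $C:=D_\eta$.

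There is no genuine analytical obstacle in this argument: the heavy lifting has already been done in Theorem~\ref{hgughgfhfzzbvq}, and the corollary is essentially a translation of that theorem into the language of $\Gamma$-convergence. The only point requiring a bit of care is the domain bookkeeping (making sure $\psi_\e$ is defined on $\Omega'$ for small $\e$, and that the $A$-quantity on a closed set is tamed by a $BV^3$-norm on a slightly larger open set), which is handled by the interposed chain $\Omega'\subset\subset\Omega''\subset\subset\Omega$ above.
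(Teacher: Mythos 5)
Your proposal is correct and follows exactly the route the paper intends: the corollary is explicitly presented in the text as ``a direct consequence'' of Theorem~\ref{hgughgfhfzzbvq}, with no separate proof given, and your argument supplies precisely the missing details. You correctly identify that the middle expression in the chain \eqref{fgyufghfghjgghgjkhkkhhkggkjgmhkjjuyhghgGHKKvbvbvvhughghojjlkjjjlouokjjgghgjgokjjlkzzbvq} is $I^{(3)}_\e\big|_{\Omega_0}(\psi_\e)$, that $\psi_\e\to\psi$ in $W^{1,q}(\Omega')$ for all finite $q$ (the domain bookkeeping you do is exactly what is needed, and one should also note that $1-|\nabla\psi_\e|^2\geq 0$ by Jensen so the $3/2$ power is well defined), and that the finiteness of $A_{\nabla\psi,3}(\overline{\Omega'})$ follows from $\nabla\psi\in BV^3_{loc}$ via the monotonicity chain of Remark~\ref{bhhfgccjjzzbvqKK} applied to an intermediate open set $\Omega''$.
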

\begin{remark}
We do not know whether one can get  a global and  sharp ``improved''
version of  Corollary \ref{ghgghg} with $\Omega'=\Omega$ and with
the constant $C:=\frac{1}{2\sqrt[3]{4}\,C_N}$ in \er{bhghgghg}. This
is the sharp constant for the energy \er{bjhgjh} with $p=3$ and
$N=2$ in the particular case where $\nabla\psi\in BV$.
\end{remark}


The paper is organized as follows. Section~\ref{ghbyhgvyhvg} is
devoted to definitions and properties of the spaces $BV^q$. In
subsection~\ref{hggyffgfdfdfd} we present some additional
definitions and generalized   versions of some of the results stated
above. In subsection~\ref{hjhjghjyh} we give the proofs of our main
results about the spaces $BV^q$. In Section~\ref{AVGG} we give the
proof of Theorem \ref{hgughgfhfzzbvq}, which is an application of
the spaces $BV^q$ to the study of energies of Avies-Giga type. The
proofs of Proposition~\ref{gygfhffghlkoii} and Lemma \ref{hjgjg} are
given in the Appendix \ref{AppB}. For the convenience of the reader,
in Appendix \ref{AppA} we states some known results on $BV$
functions, that we need for the proof.

\subsubsection*{Acknowledgments} The research was supported by  the
Israel Science Foundation (Grant No. 999/13). I thank Itai Shafrir
for some interesting  discussions and Petru Mironescu for very
helpful suggestions that helped me improve an earlier version of the
manuscript.

\section{Properties of the space $BV^q$}\label{ghbyhgvyhvg}
\subsection{
Some additional definitions and results}\label{hggyffgfdfdfd}
First we introduce local versions of the quantity
${\hat A}_{u,q}$
that are related to the space $BV^q_{loc}$:
\begin{definition}\label{gjghghghjgghGHKKzzbvqKK}
Given a compact set $ \ov U\subset\subset\O$ let
\begin{equation}\label{hjhjhghgjkghggghGHKKzzbvq}
\begin{aligned}
A_{u,q}\big(\ov U\big)&:=\limsup\limits_{\e\to
0^+}\int\limits_{B_1(0)}\int\limits_{\ov
U}\frac{1}{\e|z|}\Big|u(x+\e
z)-u(x)\Big|^qdxdz\\
&=\limsup\limits_{\e\to 0^+}\int\limits_{\ov
U}\int\limits_{B_\e(x)}\frac{1}{\e^N}\,\frac{\big|u(
y)-u(x)\big|^q}{|y-x|}dydx\,.
\end{aligned}
\end{equation}
For an open set $\Omega\subset\R^N$ let
\begin{equation}
A_{u,q}\big(\Omega\big):=\sup\limits_{K\subset\subset\Omega}A_{u,q}\big(K\big).\label{hjhjhghgjkghggghGHKKjjzzbvq}
\end{equation}
\end{definition}
\begin{remark}\label{bhhfgccjjzzbvqKK}
It is clear that for any open $\Omega\subset\R^N$, any $u\in
L^q_{loc}(\Omega,\R^d)$ and for any compactly embedded open set
$\Omega_0\subset\subset\Omega$ we have
\begin{equation}\label{Ffgdfhgdhddfhyugkkiouiozzhjhjbvq}
\hat A_{u,q}\big(\Omega_0\big)\leq A_{u,q}\big(\ov\Omega_0\big)\leq
A_{u,q}\big(\Omega\big)\leq\hat A_{u,q}\big(\Omega\big).
\end{equation}
\end{remark}
\begin{remark}\label{gjghghghjgghGHKKhjhjhjhhzzbvq}
Clearly, given an open set $\Omega\subset\R^N$, $q\geq 1$ and a
function $u\in L^q_{loc}(\Omega,\R^d)$ we have
$u\in{BV}^q_{loc}(\Omega,\R^d)$
if and only if for every compact subset $K\subset\subset\O$ we have
$A_{u,q}\big(K\big)<\infty$.
\end{remark}
Next we define the following quantities, that are closely related to
${\hat A}_{u,q}$:
\begin{definition}\label{gjghghghjgghGHKKzzbvqKKkk}
Given a compact set $ \ov U\subset\subset\O$ let
\begin{equation}
\begin{aligned}
B_{u,q}\big(\ov U\big):=\limsup\limits_{\e\to 0^+}\sup\limits_{\vec
k\in S^{N-1}}\int_{\ov U}\frac{1}{\e}\Big|u(x+\e\vec
k)-u(x)\Big|^qdx \label{GMT'1jGHKKzzbvq}.
\end{aligned}
\end{equation}
Next, given an open set $\Omega\subset\R^N$ define
\begin{equation}
\label{hjhjhghgjkghggghGHKKjjzzbvqjj}
B_{u,q}\big(\Omega\big):=\sup\limits_{K\subset\subset\Omega}B_{u,q}\big(K\big).
\end{equation}
Finally, set
\begin{equation}
\label{GMT'3jGHKKkkhjjhgzzZZzzbvqkk} \hat
B_{u,q}\big(\R^N\big):=\limsup\limits_{\e\to 0^+}\sup\limits_{\vec
k\in S^{N-1}}\int_{\R^N}\frac{1}{\e}\Big|u(x+\e\vec
k)-u(x)\Big|^qdx.
\end{equation}
\end{definition}
The following result is known; for the
convenience of a reader we will give its proof in the
Appendix.
\begin{lemma}\label{hjgjg}
For any $q>1$, a function $u\in L^q(\R^N,\R^d)$ belongs to
$B_{q,\infty}^{1/q}(\R^N,\R^d)$ if and only if $\hat
B_{u,q}\big(\R^N\big)<\infty$. Moreover, for any  open
$\Omega\subset\R^N$, a function $u\in L^q_{loc}(\Omega,\R^d)$ belongs
to $\big(B_{q,\infty}^{1/q}\big)_{loc}(\Omega,\R^d)$ if and only
if for every compact $K\subset\subset\Omega$ we have
$B_{u,q}\big(K\big)<\infty$.
\end{lemma}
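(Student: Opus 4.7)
The plan is to treat the global and local statements separately, each time comparing the relevant quantity with the Besov seminorm and using the $L^q$ hypothesis to absorb large displacements. The underlying observation is that the Besov seminorm \eqref{gjhgjhfghffh} with $s=1/q$ is equal to $\sup_{h\neq 0}|h|^{-1}\int_{\R^N}|u(x+h)-u(x)|^q dx$, so the Besov condition amounts to uniform control of the $L^q$ modulus of continuity linearly in $|h|$. For the global statement, the direction $u\in B_{q,\infty}^{1/q}(\R^N,\R^d)\Rightarrow \hat B_{u,q}(\R^N)<\infty$ is immediate by setting $h=\e\vec k$ and $\rho=\e$ in the defining inequality. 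Conversely, if $\hat B_{u,q}(\R^N)<\infty$, pick $\e_0,M>0$ with $\int_{\R^N}|u(x+\e\vec k)-u(x)|^q\,dx\leq M\e$ for all $\e\in(0,\e_0)$ and $\vec k\in S^{N-1}$; this yields the Besov bound for $|h|<\e_0$, while for $|h|\geq\e_0$ the trivial estimate $\int|u(x+h)-u(x)|^q\,dx\leq 2^q\|u\|_{L^q}^q\leq (2^q\|u\|_{L^q}^q/\e_0)|h|$ suffices.

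For the easy direction of the local statement, suppose $u\in(B_{q,\infty}^{1/q})_{loc}(\Omega,\R^d)$ and $K\subset\subset\Omega$. I would enlarge $K$ to a compact $K'$ with $K\subset\operatorname{int}K'\subset\subset\Omega$, and pick $u_{K'}\in B_{q,\infty}^{1/q}(\R^N,\R^d)$ agreeing with $u$ on $K'$. For $\e<\dist(K,\partial K')$ and any $x\in K$, $\vec k\in S^{N-1}$, both $x$ and $x+\e\vec k$ lie in $K'$, so the integrand defining $B_{u,q}(K)$ coincides with that for $u_{K'}$; consequently $B_{u,q}(K)\leq\sup_h |h|^{-1}\int|u_{K'}(x+h)-u_{K'}(x)|^q dx<\infty$.

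For the converse local direction, given $K\subset\subset\Omega$ I would build an extension $u_K\in B_{q,\infty}^{1/q}(\R^N,\R^d)$ with $u_K=u$ on $K$. Choose nested opens $K\subset V_1\subset\subset V_2\subset\subset\Omega$, a cutoff $\chi\in C^\infty_c(V_1)$ with $\chi\equiv 1$ near $K$, and set $K_*:=\ov{V_2}$; the hypothesis gives $B_{u,q}(K_*)<\infty$. Define $u_K:=\chi u$ extended by $0$ outside $\Omega$. The decomposition
\begin{equation*}
u_K(x+h)-u_K(x)=\chi(x+h)[u(x+h)-u(x)]+[\chi(x+h)-\chi(x)]u(x)
\end{equation*}
shows that, for $|h|$ small enough that $\supp\chi-h\subset K_*$ and $K_*+h\subset\Omega$, the first term contributes at most $\|\chi\|_\infty^q\int_{K_*}|u(x+h)-u(x)|^q dx\leq C|h|$ by the limsup bound on $K_*$, while the smoothness of $\chi$ makes the second term $\leq\|\nabla\chi\|_\infty^q\|u\|_{L^q(K_*)}^q|h|^q\leq C'|h|$ for $|h|\leq 1$. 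For $|h|$ bounded below, the $L^q$ bound on $u_K$ closes the argument, giving $u_K\in B_{q,\infty}^{1/q}(\R^N,\R^d)$.

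The hard part will be the bookkeeping in this last direction: since $B_{u,q}(K_*)<\infty$ is only a limsup statement, it provides the linear-in-$\e$ estimate only for small $\e$, so the nested $V_1,V_2$ and the cutoff $\chi$ must be chosen so that every translate $\supp\chi-h$ with small $h$ sits inside the single compact $K_*$ where the bound is available, while simultaneously ensuring that $u(\cdot+h)$ is well-defined and $L^q$ on $K_*$. Once the nesting is set up, the rest reduces to the routine comparisons above.
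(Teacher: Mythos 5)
Your proposal is correct and follows essentially the same route as the paper's proof: the equality of the Besov seminorm with $\sup_{h\neq 0}|h|^{-1}\int|u(x+h)-u(x)|^q\,dx$ (the paper's chain \eqref{gjhgjhfghffhnmmbbbkhjhh}--\eqref{gjhgjhfghffhnmmbbbkhjhhkljkjk}), the split of the supremum into $\rho<\e_0$ and $\rho\geq\e_0$ controlled by $\|u\|_{L^q}^q$ (the paper's \eqref{gjhgjhfghffhj}--\eqref{gjhgjhfghffhjj}), the direct restriction argument for the easy local direction, and the cutoff extension $\hat u=\eta u$ with the product-rule decomposition for the converse local direction (the paper's \eqref{jjhhkghjgjfhfhfhn,nn} and \eqref{gjhgjhfghffhhgjgjjklkhjhhjl;klkl;}).
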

Then Proposition \ref{ghgghgghzzbvqhjhjh} is a part of the following
statment:
\begin{proposition}\label{ghgghgghzzbvq}
For every open set $\Omega\subset\R^N$, every $q\geq 1$ and $u\in
L^q_{loc}(\Omega,\R^d)$ we have
\begin{equation}\label{hjhjhghgjkghggghGHGHGHKKkljghhgzzbvqaa}
\frac{A_{u,q}(\Omega)}{\mathcal{L}^N({B_1(0)})}\leq
B_{u,q}\big(\Omega\big)\leq
2^{N+q}\frac{A_{u,q}(\Omega)}{\mathcal{L}^N({B_1(0)})}\,.
\end{equation}
Moreover, if $u\in
L^q(\R^N,\R^d)$ then
\begin{equation}\label{hjhjhghgjkghggghGHGHGHKKkljghhgzzbvqkkkjkhjhgjg}
\frac{\hat A_{u,q}(\R^N)}{\mathcal{L}^N({B_1(0)})}\leq \hat
B_{u,q}\big(\R^N\big)\leq 2^{N+q}\frac{\hat
A_{u,q}(\R^N)}{\mathcal{L}^N({B_1(0)})}.
\end{equation}
In particular, for $q>1$ we have:
\begin{equation}\label{hjhjjggjbnvnv}
BV^q(\R^N,\R^d)=B_{q,\infty}^{1/q
}(\R^N,\R^d)\quad\quad\text{and}\quad\quad
BV^q_{loc}(\Omega,\R^d)=\big(B_{q,\infty}^{ 1/q
}\big)_{loc}(\Omega,\R^d).
\end{equation}
\end{proposition}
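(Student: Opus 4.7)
The plan is to establish the two-sided inequality \eqref{hjhjhghgjkghggghGHGHGHKKkljghhgzzbvqaa} (and its $\R^N$ analogue \eqref{hjhjhghgjkghggghGHGHGHKKkljghhgzzbvqkkkjkhjhgjg}) by a direct comparison between the double integral in $A_{u,q}$ and the direction-supremum integral in $B_{u,q}$, and then to deduce the equivalences \eqref{hjhjjggjbnvnv} by combining those bounds with Lemma~\ref{hjgjg} and Remark~\ref{bhhfgccjjzzbvq}. For the easy direction $A_{u,q}(\Omega)\le \mathcal{L}^N(B_1(0))\,B_{u,q}(\Omega)$, I fix $\bar U\subset\subset\Omega$ and pass to polar coordinates $z=r\vec k$ with $r=|z|\in(0,1)$ and $\vec k \in S^{N-1}$, rewriting the defining integral as
\begin{equation*}
\int_0^1 r^{N-1}\int_{S^{N-1}}\Big(\frac{1}{\e r}\int_{\bar U}|u(x+\e r\vec k)-u(x)|^q\,dx\Big)\,d\mathcal{H}^{N-1}(\vec k)\,dr.
\end{equation*}
For $\e\le\e_0$, the inner bracket is bounded by $\sup_{t\in(0,\e_0]}\sup_{\vec k}t^{-1}\!\int_{\bar U}|u(x+t\vec k)-u(x)|^q dx$; integrating out $r$ and $\vec k$ produces the factor $\mathcal{L}^N(B_1(0))$, and then letting $\e\to 0^+$ followed by $\e_0\to 0^+$ yields the bound on $\bar U$, after which a supremum over $\bar U$ gives the result on $\Omega$.

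For the reverse inequality, the key geometric observation is that the ball $B_{1/2}(\vec k/2)\subset B_1(0)$ has volume $2^{-N}\mathcal{L}^N(B_1(0))$ and is preserved by the reflection $z\mapsto \vec k-z$. Applying $|a+b|^q\le 2^{q-1}(|a|^q+|b|^q)$ to
\begin{equation*}
u(x+\e\vec k)-u(x)=\bigl[u(x+\e\vec k)-u(x+\e z)\bigr]+\bigl[u(x+\e z)-u(x)\bigr],
\end{equation*}
I average the left-hand side over $z\in B_{1/2}(\vec k/2)$ and integrate over $x\in\bar U$. In the first resulting term the substitution $\tilde z=\vec k-z$ preserves $B_{1/2}(\vec k/2)$, after which the change of variable $y=x+\e(\vec k-\tilde z)$ shifts the $x$-domain into a fixed compact enlargement $K_1:=\bar U+\bar B_{\delta}(0)\subset\subset\Omega$ (valid for $\e<\delta<\dist(\bar U,\partial\Omega)$); the second term needs no shift. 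Since $|z|\le 1$ on $B_1(0)$, both pieces are dominated by $\int_{B_1(0)}\int_{K_1}|u(x+\e z)-u(x)|^q/|z|\,dx\,dz$, and collecting the numerical factors (the $2^{q-1}$ from the triangle inequality, the factor $2$ from summing the two terms, and the $2^N$ coming from $1/\mathcal{L}^N(B_{1/2}(0))$) produces exactly the constant $2^{N+q}/\mathcal{L}^N(B_1(0))$. Dividing by $\e$, taking $\limsup_{\e\to 0^+}$, then $\sup_{\vec k\in S^{N-1}}$, and finally a supremum over $\bar U\subset\subset\Omega$ gives the claimed upper bound.

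The $\R^N$ version \eqref{hjhjhghgjkghggghGHGHGHKKkljghhgzzbvqkkkjkhjhgjg} follows from the same two arguments, simplified by the fact that on $\R^N$ the shift $y=x+\e(\vec k-\tilde z)$ is a measure-preserving self-map and no enlargement is needed. The equivalence \eqref{hjhjjggjbnvnv} then follows from Lemma~\ref{hjgjg} (characterizing $B^{1/q}_{q,\infty}(\R^N,\R^d)$ by finiteness of $\hat B_{u,q}(\R^N)$), Remark~\ref{bhhfgccjjzzbvq} (identifying $\hat A_{u,q}(\R^N)<\infty$ with $\bar A_{u,q}(\R^N)<\infty$ for $u\in L^q$), and the two-sided comparison \eqref{hjhjhghgjkghggghGHGHGHKKkljghhgzzbvqkkkjkhjhgjg}; the local statement uses that the enlargement $K_1$ still lies in $\Omega$, so finiteness of $A_{u,q}$ on every compact $K\subset\subset\Omega$ is equivalent to finiteness of $B_{u,q}$ on every such $K$. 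The main technical obstacle I expect is the change-of-variable bookkeeping in the upper bound: choosing $K_1$ and the threshold on $\e$ uniformly in $\vec k\in S^{N-1}$, verifying that the translates $\bar U+\e(\vec k-\tilde z)$ remain in $K_1$ for all $\tilde z\in B_{1/2}(\vec k/2)$, and exploiting $|\tilde z|\le 1$ so that the inequality genuinely uses the weight $1/|z|$ from $A_{u,q}$.
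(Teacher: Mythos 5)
Your proposal is correct and follows essentially the same route as the paper, which derives this proposition from Lemma~\ref{hjhhjKKzzbvq}: the easy inequality is the passage to polar coordinates you describe, and the harder one is obtained exactly by decomposing $u(x+\e\vec k)-u(x)$ through the intermediate point $x+\e z$, applying $|a+b|^q\le 2^{q-1}(|a|^q+|b|^q)$, averaging $z$ over the half-ball $B_{|\vec k|/2}(\vec k/2)$ and exploiting its invariance under $z\mapsto\vec k-z$, with the shifted $x$-domain absorbed into a slightly larger compact set. The bookkeeping of the constants ($2^{q-1}$ from convexity, $2$ from the two terms after reflection, $2^N$ from the volume of the half-ball) matches the paper's $2^{N+q}/\mathcal{L}^N(B_1(0))$ exactly, and the final deduction of \eqref{hjhjjggjbnvnv} via Lemma~\ref{hjgjg} and Remark~\ref{bhhfgccjjzzbvq} is also what the paper does.
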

Proposition~\ref{ghgghgghzzbvq} will be deduced from
Lemma~\ref{hjhhjKKzzbvq} below.

The next  theorem is a generalization of Theorem
\ref{ghgghgghjjkjkzzbvqred}:
\begin{theorem}\label{ghgghgghjjkjkzzbvq}
Let $\Omega\subset\R^N$ be an open set and let $u\in
BV_{loc}(\Omega,\R^d)\cap L^\infty_{loc}(\Omega,\R^d)$. Then, for
every $q>1$ we have $u\in {BV}^q_{loc}(\Omega,\R^d)$ and for every
compact set $K\subset\subset\Omega$
such that $\|Du\|(\partial K)=0$ we have
\begin{equation}\label{fgyufghfghjgghgjkhkkGHGHKKjjjjkjkkjkmmlmjijilzzbvqKK}
A_{u,q}\big(K\big)=C_N\int_{J_u\cap
K}\Big|u^+(x)-u^-(x)\Big|^qd\mathcal{H}^{N-1}(x),
\end{equation}
where $C_N$ is defined in
\er{fgyufghfghjgghgjkhkkGHGHKKggkhhjoozzbvqkk}. Moreover, if in
addition $u\in BV(\Omega,\R^d)\cap L^\infty(\Omega,\R^d)$, then for
every $q>1$ we have
\begin{equation}\label{fgyufghfghjgghgjkhkkGHGHKKjjjjkjkkjkmmlmjijiluuizzbvqKK}
A_{u,q}\big(\Omega\big)=C_N\int_{J_u}\Big|u^+(x)-u^-(x)\Big|^qd\mathcal{H}^{N-1}(x).
\end{equation}
Finally, if $\Omega$ is an open set with a bounded Lipschitz boundary
and $u\in BV(\Omega,\R^d)\cap L^\infty(\Omega,\R^d)$ then we have
$u\in {BV}^q(\Omega,\R^d)$ for every $q>1$ and
\begin{equation}\label{fgyufghfghjgghgjkhkkGHGHKKjjjjkjkkjkmmlmjijiluuizziihhhjbvqKK}
\hat
A_{u,q}\big(\Omega\big)=C_N\int_{J_u}\Big|u^+(x)-u^-(x)\Big|^qd\mathcal{H}^{N-1}(x)=A_{u,q}\big(\Omega\big).
\end{equation}
\end{theorem}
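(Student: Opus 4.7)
The plan is to reduce the computation of $A_{u,q}(K)$ to a one-dimensional BBM-type identity on slices, then reassemble via the $BV$ slicing formula for the measure $\mathcal{H}^{N-1}\res J_u$. I would start from
\[
A_{u,q}(K) = \limsup_{\e\to 0^+}\int_{B_1(0)}\frac{1}{\e|z|}\int_K|u(x+\e z)-u(x)|^q\,dx\,dz
\]
and change to polar coordinates $z=r\omega$, $r\in(0,1)$, $\omega\in S^{N-1}$; after substituting $\tilde\e=\e r$, the expression becomes
\[
\int_0^1 r^{N-1}\,dr\int_{S^{N-1}}\bigg[\frac{1}{\tilde\e}\int_K|u(x+\tilde\e\omega)-u(x)|^q\,dx\bigg]\,d\mathcal{H}^{N-1}(\omega).
\]
The task then reduces to computing the directional limit $F_0(\omega):=\lim_{\e\to 0}F_\e(\omega)$ of $F_\e(\omega):=\tfrac{1}{\e}\int_K|u(x+\e\omega)-u(x)|^q\,dx$ for each $\omega$, together with a uniform majorant $F_\e(\omega)\leq C\|u\|_{L^\infty(K')}^{q-1}\|Du\|(K')$ on a slight enlargement $K'\supset K$ that enables dominated convergence over $(r,\omega)$.

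For fixed $\omega$, Fubini reduces $F_\e(\omega)$ to an average over $\omega^\perp$ of 1D integrals of the slices $u_y^\omega(t):=u(y+t\omega)$. By the standard $BV$ slicing theorem (Appendix~\ref{AppA}), $u_y^\omega\in BV(\R)\cap L^\infty(\R)$ for $\mathcal{H}^{N-1}$-a.e.\ $y$, with jump set in bijection with the intersections of the line $\{y+t\omega\}$ with $J_u$. The core 1D lemma I need is: for $v\in BV(\R)\cap L^\infty(\R)$ and $q>1$,
\[
\lim_{\e\to 0^+}\tfrac{1}{\e}\int_\R|v(t+\e)-v(t)|^q\,dt=\sum_{t\in J_v}|v^+(t)-v^-(t)|^q.
\]
To prove it, I split $v=v^c+v^J$ into the continuous part (absolutely continuous plus Cantor) and the pure jump part. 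Since $v^c$ is continuous with finite non-atomic variation, its modulus of continuity $\mu(\e)$ tends to $0$, and
\[
\tfrac{1}{\e}\int|v^c(t+\e)-v^c(t)|^q\,dt\leq[\mu(\e)]^{q-1}\|Dv^c\|(\R)\to 0,
\]
where the factor $[\mu(\e)]^{q-1}$ vanishes \emph{precisely because $q>1$}. For $v^J=\sum_i\alpha_i\mathbf{1}_{t>t_i}$, isolating finitely many large jumps (which exactly contribute $\sum|\alpha_i|^q$ once $\e$ is smaller than their mutual gaps) and controlling the small ones via total variation yields the claimed sum.

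Integrating the 1D identity against $d\mathcal{H}^{N-1}(y)$ and invoking the $BV$ slicing formula for $\mathcal{H}^{N-1}\res J_u$ gives
\[
F_0(\omega)=\int_{J_u\cap K}|u^+-u^-|^q|\omega\cdot\nu_u|\,d\mathcal{H}^{N-1}.
\]
Substituting back, using $\int_0^1 r^{N-1}\,dr=\frac{1}{N}$ together with the rotation-invariant identity $\frac{1}{N}\int_{S^{N-1}}|\omega\cdot\nu|\,d\mathcal{H}^{N-1}(\omega)=C_N$ (independent of $\nu$), produces formula \eqref{fgyufghfghjgghgjkhkkGHGHKKjjjjkjkkjkmmlmjijilzzbvqKK}. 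For the global formula \eqref{fgyufghfghjgghgjkhkkGHGHKKjjjjkjkkjkmmlmjijiluuizzbvqKK} I would exhaust $\Omega$ by compacts $K_n\nearrow\Omega$ with $\|Du\|(\partial K_n)=0$ (possible since $\|Du\|$ is finite) and pass to the supremum. For the Lipschitz boundary case \eqref{fgyufghfghjgghgjkhkkGHGHKKjjjjkjkkjkmmlmjijiluuizziihhhjbvqKK}, I would extend $u$ to $\tilde u\in BV(\R^N)\cap L^\infty(\R^N)$ via a standard $BV$-extension, apply the formula to $\tilde u$, and use the Lipschitz boundary to conclude that $\hat A_{u,q}(\Omega)=A_{u,q}(\Omega)$: boundary-layer contributions near $\partial\Omega$ are negligible because $u$ has a well-defined $BV$-trace on $\partial\Omega$ with finite $L^\infty$-norm.

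The delicate step is the 1D analysis of the continuous part $v^c$: its contribution to the limit must vanish using only $q>1$ and boundedness of $v$, achieved by combining uniform continuity (from non-atomicity of $\|Dv^c\|$) with $\int|v^c(t+\e)-v^c(t)|\,dt\leq \e\|Dv^c\|(\R)$, yielding the crucial gain of a factor $[\mu(\e)]^{q-1}$. A secondary technical issue is the uniformity in $\omega$ required for dominated convergence over $S^{N-1}$, which needs a slice-wise bound independent of direction.
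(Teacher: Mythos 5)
Your proposal is correct as a strategy but takes a genuinely different route from the paper. You reduce the $N$-dimensional problem to a one-dimensional BBM-type identity via polar coordinates and $BV$ slicing, and then reassemble with the coarea formula for $\mathcal{H}^{N-1}\res J_u$ (the Jacobian factor $|\omega\cdot\nu_u|$). The paper instead stays entirely $N$-dimensional: it mollifies $u$ by $u_\e$, uses integration by parts to rewrite the difference quotient $\frac{1}{t}\int_K W(u(x+t\vec k),u(x))\,dx$ as an integral against the measure $Du\cdot\vec k$, and then passes to the limit using the decomposition of $Du$, the approximate-continuity properties of $BV$ functions (AFP Theorems 3.108--3.109), and Dominated Convergence. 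The paper's method yields a substantially more general identity, \eqref{fgyufghfghjgghgjkhkkGHKKzzbvq}, valid for any nonnegative symmetric $C^1$ potential $W$ with $W(a,a)=0$, of which $W(a,b)=|a-b|^q$ is a special case. Your slicing approach is more elementary in spirit and closer to the classical BBM/D\'avila arguments, but it only handles the power $|a-b|^q$ directly, requires the coarea formula for rectifiable sets, and requires checking that the one-dimensional jump sets of the slices match the geometric slices of $J_u$ with the correct one-sided limits (a nontrivial consequence of AFP Theorem 3.108, not contained in the paper's Appendix~\ref{AppA} as you cite).

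One place where the sketch is looser than it should be is the one-dimensional lemma itself. After splitting $v=v^c+v^J$ and isolating finitely many large jumps in $v^J$, you still have cross terms from $|a+b|^q \not= |a|^q+|b|^q$, and the naive bound $q(|a|+|b|)^{q-1}|b|$ does not vanish in the limit, since $\frac{1}{\e}\int|v^c(t+\e)-v^c(t)|\,dt\to\|Dv^c\|(\R)\not=0$. What saves the argument is that the jump increment $a$ is supported on a set of measure $O(\e)$ near the large jump points $t_1,\dots,t_K$, and on those intervals the increment $b$ of the remainder is controlled by $|Dw|((t_i-\e,t_i+\e))\to |Dw|(\{t_i\})=0$ because $w$ has no atom at $t_i$. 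This localization near the large jumps is the genuinely delicate step, and it is where the $q>1$ hypothesis and the structure of the decomposition interact; I would spell it out rather than appeal to ``controlling the small ones via total variation.'' With this fixed, the slicing route does close, including the $\|Du\|(\partial K)=0$ hypothesis which you implicitly use (via $\mathcal{H}^{N-1}(J_u\cap\partial K)=0$) to avoid ambiguity at the boundary of $K$, and your exhaustion/extension arguments for the global formulas \eqref{fgyufghfghjgghgjkhkkGHGHKKjjjjkjkkjkmmlmjijiluuizzbvqKK}--\eqref{fgyufghfghjgghgjkhkkGHGHKKjjjjkjkkjkmmlmjijiluuizziihhhjbvqKK} parallel the paper's.
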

%
%
%
%
%
%

%
%
%
%

%
%
%
The next proposition
is an easy consequence of the definitions; the details are left to
the reader.
\begin{proposition}\label{ghgfgc}
For every open set $\Omega\subset\R^N$, two real numbers $q_2> q_1\geq
1$ and  $u\in L^\infty(\Omega,\R^d)$ we have
\begin{multline}\label{GMT'3jGHKKkkhjjhgzzZZzzZZzzbvqhjjhij}
\bar A_{u,q_2}\big(\Omega\big)\leq
2^{q_2-q_1}\|u\|^{q_2-q_1}_{L^\infty(\Omega,\R^d)}\bar
A_{u,q_1}\big(\Omega\big),\quad\hat A_{u,q_2}\big(\Omega\big)\leq
2^{q_2-q_1}\|u\|^{q_2-q_1}_{L^\infty(\Omega,\R^d)}\hat
A_{u,q_1}\big(\Omega\big),\\ \quad A_{u,q_2}\big(\Omega\big)\leq
2^{q_2-q_1} \|u\|^{q_2-q_1}_{L^\infty(\Omega,\R^d)}
A_{u,q_1}\big(\Omega\big)\quad\text{and}\quad
B_{u,q_2}\big(\Omega\big)\leq
2^{q_2-q_1}\|u\|^{q_2-q_1}_{L^\infty(\Omega,\R^d)}
B_{u,q_1}\big(\Omega\big).
\end{multline}
In particular, for every open set $\Omega\subset\R^N$ and any two
real numbers $q_2> q_1\geq 1$  we have
$BV^{q_1}_{loc}(\Omega,\R^d)\cap L^\infty_{loc}(\Omega,\R^d)\subset
BV^{q_2}_{loc}(\Omega,\R^d)$ and $BV^{q_1}(\Omega,\R^d)\cap
L^\infty(\Omega,\R^d)\subset BV^{q_2}(\Omega,\R^d)$.
\end{proposition}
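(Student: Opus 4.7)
\medskip

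\noindent\textbf{Plan.} The plan is to reduce all four inequalities in \er{GMT'3jGHKKkkhjjhgzzZZzzZZzzbvqhjjhij} to a single pointwise estimate on the integrands. Since $u\in L^\infty(\Omega,\R^d)$, for any $x,y\in\Omega$ we have
\begin{equation*}
\big|u(y)-u(x)\big|\leq 2\|u\|_{L^\infty(\Omega,\R^d)},
\end{equation*}
and therefore, for $q_2>q_1\geq 1$,
\begin{equation*}
\big|u(y)-u(x)\big|^{q_2}=\big|u(y)-u(x)\big|^{q_2-q_1}\cdot\big|u(y)-u(x)\big|^{q_1}\leq 2^{q_2-q_1}\|u\|_{L^\infty(\Omega,\R^d)}^{q_2-q_1}\big|u(y)-u(x)\big|^{q_1}.
\end{equation*}
This is the only substantive ingredient.

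\medskip

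\noindent First I would multiply the above pointwise bound by $\frac{1}{\e^N|y-x|}\chi_{B_\e(x)\cap\Omega}(y)$ and integrate $dy\,dx$ over $\Omega\times\Omega$ to obtain, for each fixed $\e\in(0,1)$,
\begin{equation*}
\int_{\Omega}\int_{B_\e(x)\cap\Omega}\frac{1}{\e^N}\,\frac{|u(y)-u(x)|^{q_2}}{|y-x|}\,dy\,dx\leq 2^{q_2-q_1}\|u\|_{L^\infty}^{q_2-q_1}\int_{\Omega}\int_{B_\e(x)\cap\Omega}\frac{1}{\e^N}\,\frac{|u(y)-u(x)|^{q_1}}{|y-x|}\,dy\,dx.
\end{equation*}
Taking $\sup_{\e\in(0,1)}$ gives the inequality for $\bar A$, and taking $\limsup_{\e\to 0^+}$ gives the inequality for $\hat A$. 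The corresponding inequality for $A_{u,q}(\ov U)$ with $\ov U\subset\subset\Omega$ compact follows in the same way from \er{hjhjhghgjkghggghGHKKzzbvq}, and then the inequality for $A_{u,q}(\Omega)$ comes from the supremum in \er{hjhjhghgjkghggghGHKKjjzzbvq}. For $B_{u,q}$ I would apply the pointwise estimate with $y=x+\e\vec k$ inside the integral in \er{GMT'1jGHKKzzbvq}, take the $\sup_{\vec k\in S^{N-1}}$ and $\limsup_{\e\to 0^+}$, and then the supremum over compact subsets in \er{hjhjhghgjkghggghGHKKjjzzbvqjj}.

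\medskip

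\noindent The two embedding statements are then immediate: if $u\in BV^{q_1}_{loc}(\Omega,\R^d)\cap L^\infty_{loc}(\Omega,\R^d)$ then for every compact $K\subset\subset\Omega$, picking an open $\Omega'$ with $K\subset\Omega'\subset\subset\Omega$ on which $u$ is bounded and in $BV^{q_1}$, the inequality for $\bar A$ applied on $\Omega'$ yields $\bar A_{u,q_2}(\Omega')<\infty$, so $u\in BV^{q_2}(\Omega',\R^d)$ and in particular $u\in BV^{q_2}_{loc}(\Omega,\R^d)$. The global embedding $BV^{q_1}(\Omega,\R^d)\cap L^\infty(\Omega,\R^d)\subset BV^{q_2}(\Omega,\R^d)$ is even more direct. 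There is no real obstacle here; the only mildly delicate point is handling the two versions of $A_{u,q}$ (on compact sets and on $\Omega$) and verifying that the pointwise inequality survives the order $\limsup$--then--$\sup_{K\subset\subset\Omega}$, which is routine since the pointwise bound has no $\e$- or $K$-dependence on its right-hand side beyond the factor $\bar A_{u,q_1}$, $\hat A_{u,q_1}$ etc.
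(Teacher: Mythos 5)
Your proof is correct. The paper states that this proposition ``is an easy consequence of the definitions; the details are left to the reader'', and your pointwise bound $|u(y)-u(x)|^{q_2}\leq \big(2\|u\|_{L^\infty}\big)^{q_2-q_1}|u(y)-u(x)|^{q_1}$ for a.e.\ $x,y\in\Omega$, followed by integration against the relevant kernel and passage to the appropriate $\sup$/$\limsup$ in each of the four quantities, is precisely the intended argument.
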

\begin{remark}\label{bhhfgcczzbvqbnbnhjh}
If $\Omega\subset\R^N$ is an open set, $D\subset\R^N$ is a Borel set
and $\chi_D$ is the characteristic function of $D$, i.e.,
\begin{equation}\label{fgyuhjjhhh}
\chi_D(x):=\begin{cases}1 & x\in
D,\\
0& x\notin D,
\end{cases}
\end{equation}
then clearly for every $q\geq 1$ we have:
\begin{equation}\label{GMT'3jGHKKkkhjjhgzzZZzzZZzzbvqhjjhijjjj}
\begin{aligned}
\bar A_{\chi_D,q}\big(\Omega\big)&=\bar
A_{\chi_D,1}\big(\Omega\big),\quad\hat
A_{\chi_D,q}\big(\Omega\big)&=\hat A_{\chi_D,1}\big(\Omega\big),\\
A_{\chi_D,q}\big(\Omega\big)&= A_{\chi_D,1}\big(\Omega\big),\quad
B_{\chi_D,q}\big(\Omega\big)&=B_{\chi_D,1}\big(\Omega\big).
\end{aligned}
\end{equation}
In particular, $\chi_D\in BV^q_{loc}(\Omega,\R^d)$ if and only if
$D$ has a locally finite perimeter. Moreover, if in addition
$\mathcal{L}^N(D)<\infty$ then we have $\chi_D\in BV^q(\Omega,\R^d)$
if and only if $D$ has finite perimeter.
\end{remark}
%
%
%
%
%
%

In the special case $N=1$, i.e., when the domain $\Omega$ is an
interval, there exists a classical notion of a space  of
functions of bounded $q$-variation  (see e.g., Kolyada and
Lind~\cite{kolyada} and the references therein). This space, denoted
by
$V_q\big(\Omega,\R^d\big)$, was first considered by
Wiener~\cite{wiener} (for $q=2$). Below we recall the
definition of $V_q\big(\Omega,\R^d\big)$ and also define its
a.e.-equivalent version that we denote by
$\hat V_q\big(\Omega,\R^d\big)$.
\begin{definition}\label{hghjghghgh}
Given an interval $I\subseteq\R$ (open, closed, bounded or
unbounded) denote for every $n\in \mathbb{N}$,
\begin{equation*}
\Pi_n(I):=\Big\{(x_1,x_2,\ldots,x_{n+1})\in\R^{n+1}\,:\;
x_1<x_2<\ldots<x_n<x_{n+1},\;\;x_1\in I,\;\;x_{n+1}\in I\Big\}.
\end{equation*}
For any function $f:I\to\R^d$ defined {\em everywhere} in $I$  and
for every $q\geq 1$ let
\begin{equation}\label{vhghfggfgfgfdfdiuuiui}
v_{q,I}(f):=\sup\limits_{n\in\mathbb{N}}\Bigg(\sup\limits_{(x_1,\ldots,x_{n+1})\in\Pi_n(I)}\Big(\sum_{k=1}^{n}\big|f(x_{k+1})-f(x_k)\big|^q\Big)^{\frac{1}{q}}\Bigg)\,.
\end{equation}
We shall say that $f\in V_q(I,\R^d)$
if $v_{q,I}(f)<\infty$. Next, for a measurable $\R^d$-valued
function $f$, defined a.e. in $I$, and $q\geq 1$ let
\begin{equation}\label{vhghfggfgfgfdfdiuuiuihjhj}
\hat
v_{q,I}(f):=\inf\bigg\{v_{q,I}(g)\,:\;g:I\to\R^d,\;g(x)=f(x)\;\,\text{a.e.
in}\;\,I\bigg\}.
\end{equation}
We shall say that such $f$
belongs to the space $\hat V_q(I,\R^d)$ if $\hat v_{q,I}(f)<\infty$.
Evidently, if $\hat v_{q,I}(f)<\infty$ then $f\in
L^\infty\big(I,\R^d\big)$ and moreover, if $v_{q,I}(f)<\infty$ then
$f$ is bounded everywhere.
\end{definition}
The next Proposition is concerned with the relation between the spaces $\hat
V_q\big([a,b],\R^d\big)$ and $BV^q\big((a,b),\R^d\big)$:
\begin{proposition}\label{gygfhffghlkoii}
For every $q\geq 1$ and every $a<b\in\mathbb{R}$, if a measurable
function $f:(a,b)\to\R^d$ defined a.e.~in $(a,b)$
belongs to the space $\hat V_q\big([a,b],\R^d\big)$, then $f\in
BV^q\big((a,b),\R^d\big)$. Moreover, we have:
\begin{equation}\label{vhghfggfgfgfdfdiuuiuihjhjlklkouiui}
\bar A_{f,q}\big((a,b)\big)\leq 4\big(\hat v_{q,[a,b]}(f)\big)^q.
\end{equation}
I.e. the space $\hat V_q\big([a,b],\R^d\big)$ is continuously
embedded in $BV^q\big((a,b),\R^d\big)$.
\end{proposition}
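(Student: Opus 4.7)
The plan is to estimate the double integral defining $\bar A_{f,q}\bigl((a,b)\bigr)$ directly by exploiting a one-dimensional partitioning trick that is standard for functions of bounded $q$-variation. First I would note that both sides of \eqref{vhghfggfgfgfdfdiuuiuihjhjlklkouiui} depend only on the a.e.\ equivalence class of $f$, so it is enough to fix an arbitrary representative $g:[a,b]\to\R^d$, defined everywhere, with $g=f$ a.e.\ and $V:=v_{q,[a,b]}(g)<\infty$, and prove $\bar A_{f,q}\bigl((a,b)\bigr)\leq 2V^q$; taking the infimum over such $g$ then yields $\bar A_{f,q}\bigl((a,b)\bigr)\leq 2\bigl(\hat v_{q,[a,b]}(f)\bigr)^q\leq 4\bigl(\hat v_{q,[a,b]}(f)\bigr)^q$.

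Next I would rewrite, for fixed $\e\in(0,1)$, the inner double integral by changing variables $y=x+t$ and using the symmetry $t\mapsto -t$:
\begin{equation*}
I(\e):=\int_a^b\int_{B_\e(x)\cap(a,b)}\frac{1}{\e}\frac{|g(y)-g(x)|^q}{|y-x|}\,dy\,dx=\frac{2}{\e}\int_0^{\min(\e,b-a)}\frac{1}{t}\Bigl(\int_a^{b-t}|g(x+t)-g(x)|^q\,dx\Bigr)dt.
\end{equation*}
The heart of the argument is the following pointwise-in-$t$ bound, which I would state as a short lemma:
\begin{equation*}
\int_a^{b-t}|g(x+t)-g(x)|^q\,dx\leq t\,V^q\qquad\text{for every }t\in(0,b-a].
\end{equation*}
To prove it I would slice the integral: writing $x=a+s+kt$ with $s\in[0,t)$ and $k\in\mathbb{Z}_{\geq 0}$, Fubini gives
\begin{equation*}
\int_a^{b-t}|g(x+t)-g(x)|^q\,dx=\int_0^t\sum_{k=0}^{K(s)}\bigl|g(a+s+(k+1)t)-g(a+s+kt)\bigr|^q\,ds,
\end{equation*}
where $K(s)=\lfloor(b-a-s-t)/t\rfloor$. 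For each $s$, the increasing points $a,a+s,a+s+t,\ldots,a+s+(K(s)+1)t,b$ form an admissible partition of $[a,b]$, so by definition \eqref{vhghfggfgfgfdfdiuuiui} the inner sum is $\leq V^q$, and integration over $s\in[0,t]$ yields the claim.

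Plugging this into the expression for $I(\e)$ gives
\begin{equation*}
I(\e)\leq\frac{2}{\e}\int_0^{\min(\e,b-a)}V^q\,dt\leq 2V^q,
\end{equation*}
uniformly in $\e\in(0,1)$; taking the supremum over $\e$ finishes the proof. There is no real obstacle here: the only delicate point is the Fubini/partition identity in the proof of the key bound, and one must be careful to include the endpoints $a$ and $b$ when forming the admissible partition (so that the sum is truly controlled by $v_{q,[a,b]}(g)^q$ rather than only by a chain of $t$-spaced increments inside $[a,b]$). The factor $4$ in the stated bound is loose; the argument actually yields $2$.
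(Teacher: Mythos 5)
Your proof is correct, and it takes a genuinely different route from the paper. The paper first proves a whole‑line statement (Lemma~\ref{gygfhffghlk}): writing $\bar A_{f,q}(\R)$ via the representation in \er{GMT'3jGHKKkkhjjhgzzZZzzZZzzbvqkkklkk}, it tiles $\R$ by the intervals $J_m=(m\e|z|,(m+1)\e|z|)$, bounds each $\frac{1}{\e|z|}\int_{J_m}|f(x+\e z)-f(x)|^q\,dx$ crudely by $\sup_{x\in J_m}|f(x+\e z)-f(x)|^q$, and then sorts the indices $m$ into four families (even/odd, positive/negative) each of which yields a monotone chain, giving the bound $4\,(v_{q,\R}(f))^q$; Proposition~\ref{gygfhffghlkoii} then follows by extending $g$ constantly outside $[a,b]$ so that $v_{q,\R}(\tilde g)=v_{q,[a,b]}(g)$. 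You instead work directly on $(a,b)$: after symmetrizing the double integral you isolate the key estimate $\int_a^{b-t}|g(x+t)-g(x)|^q\,dx\le t\,V^q$, and you prove it by averaging the $t$-spaced partition sums over the phase $s\in[0,t)$ (Fubini), rather than bounding by a sup over each tile. This averaging eliminates the lossy sup step and the need to split into two parity families, which is why you recover the sharper constant $2$ instead of $4$. The argument is clean and self-contained (no extension lemma on $\R$ is needed), and every step checks out; the remark about appending the endpoints $a,b$ to the chain is in fact unnecessary, since $\Pi_n([a,b])$ only requires the extreme points of a monotone chain to lie in $[a,b]$, but including them certainly does no harm.
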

The proof of Proposition~\ref{gygfhffghlkoii} is given  in the
Appendix.
\begin{remark}\label{ghjhghhhhfff}
By Proposition~\ref{gygfhffghlkoii} we have $\hat
V_q\big([a,b],\R^d\big)\subset BV^q\big((a,b),\R^d\big)$. While for
$q=1$ it is well known that the two spaces coincide, the inclusion
is {\em strict} when $q>1$. Indeed, while $\hat
V_q\big([a,b],\R^d\big)\subset L^\infty\big((a,b),\R^d\big)$, by
Theorem~\ref{huyhuyuughhjhhjjhhjjjhhjhjjhhhjzzbvq} we have
$W^{\frac{1}{q},q}\big((a,b),\R^d\big)\subset
BV^q\big((a,b),\R^d\big)$ and it is well known that for $q>1$,
$W^{\frac{1}{q},q}\big((a,b),\R^d\big)\setminus
L^\infty\big((a,b),\R^d\big)\neq\emptyset$.
\end{remark}

\subsection{Proofs of the main results for the space
$BV^q$}\label{hjhjghjyh}
 We begin with two technical Lemmas that are used in the proof of Proposition~\ref{ghgghgghzzbvq}.
\begin{lemma}\label{gughfgfhfgdgddffddfKKzzbvq}
Let $\Omega\subset\R^N$ be an open set, $q\geq 1$  and let $u\in
L^q_{loc}(\Omega,\R^d)$. Then,  for every open
$\Omega_1\subset\subset \Omega_2\subset\subset\Omega$, for every
$h_1\in\R^N$ such that $0<|h_1|\leq
\dist(\Omega_1,\R^N\setminus\Omega_2)$ and every $h_2\in\R^N$ such
that $0<|h_2|\leq \dist(\Omega_2,\R^N\setminus\Omega)$, we have
\begin{multline}\label{gghgjhfgggjfgfhughGHGHGHjgghKKzzbvq}
\int_{\ov\Omega_1}\frac{1}{|h_1+h_2|}\Big|u\big(x+(h_1+h_2)\big)-u(x)\Big|^qdx
\leq\\
2^{q-1}\Bigg(\frac{|h_2|}{|h_1+h_2|}\int_{\ov\Omega_2}\frac{1}{|h_2|}\Big|u(x+h_2)-u(x)\Big|^qdx+\frac{|h_1|}{|h_1+h_2|}\int_{\ov\Omega_1}
\frac{1}{|h_1|}\Big|u(x+h_1)-u(x)\Big|^qdx\Bigg).
\end{multline}
In particular, for every $h\in S^{N-1}$, $0<\e_1\leq
\dist(\Omega_1,\R^N\setminus\Omega_2)$ and  $0<\e_2\leq
\dist(\Omega_2,\R^N\setminus\Omega)$, we have
\begin{multline}
\label{gghgjhfgggjfgfhughGHGHGHjgghuggghghihhjhjjhKKzzbvq}
\int_{\ov\Omega_1}\frac{1}{\e_1+\e_2}\Big|u\big(x+(\e_1+\e_2)h\big)-u(x)\Big|^qdx
\leq\\
2^{q-1}\max\Bigg\{\int_{\ov\Omega_2}\frac{1}{\e_2}\Big|u(x+\e_2
h)-u(x)\Big|^qdx\,,\,\int_{\ov\Omega_1}
\frac{1}{\e_1}\Big|u(x+\e_1h)-u(x)\Big|^qdx\Bigg\}.
\end{multline}
\end{lemma}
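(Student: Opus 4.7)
The proof is a standard triangle-inequality splitting. The plan is to decompose the difference $u(x+h_1+h_2)-u(x)$ as a telescoping sum and then apply the elementary inequality $|a+b|^q\leq 2^{q-1}(|a|^q+|b|^q)$ (valid for $q\geq 1$).

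First I would write
\[
u(x+h_1+h_2)-u(x)=\bigl[u(x+h_1+h_2)-u(x+h_1)\bigr]+\bigl[u(x+h_1)-u(x)\bigr],
\]
take $q$-th powers, and apply the convexity inequality above to obtain, for a.e.\ $x\in\ov\Omega_1$,
\[
\bigl|u(x+h_1+h_2)-u(x)\bigr|^q\leq 2^{q-1}\Bigl(\bigl|u(x+h_1+h_2)-u(x+h_1)\bigr|^q+\bigl|u(x+h_1)-u(x)\bigr|^q\Bigr).
\]
Then I would integrate over $\ov\Omega_1$ and handle the first term on the right by the change of variable $y=x+h_1$. Because $|h_1|\leq \dist(\Omega_1,\R^N\setminus\Omega_2)$, the image $\ov\Omega_1+h_1$ is contained in $\ov\Omega_2$, so
\[
\int_{\ov\Omega_1}\bigl|u(x+h_1+h_2)-u(x+h_1)\bigr|^q\,dx=\int_{\ov\Omega_1+h_1}\bigl|u(y+h_2)-u(y)\bigr|^q\,dy\leq\int_{\ov\Omega_2}\bigl|u(y+h_2)-u(y)\bigr|^q\,dy,
\]
where the hypothesis $|h_2|\leq\dist(\Omega_2,\R^N\setminus\Omega)$ guarantees that $y+h_2\in\Omega$ for $y\in\ov\Omega_2$, so the integrand is well defined.

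Combining the two estimates and dividing by $|h_1+h_2|$, I would rewrite the prefactors as
\[
\frac{1}{|h_1+h_2|}=\frac{|h_2|}{|h_1+h_2|}\cdot\frac{1}{|h_2|}=\frac{|h_1|}{|h_1+h_2|}\cdot\frac{1}{|h_1|},
\]
to match the form of the claimed inequality \eqref{gghgjhfgggjfgfhughGHGHGHjgghKKzzbvq}. The special case \eqref{gghgjhfgggjfgfhughGHGHGHjgghuggghghihhjhjjhKKzzbvq} then follows immediately by substituting $h_1=\e_1 h$, $h_2=\e_2 h$ with $h\in S^{N-1}$: the weights $|h_2|/|h_1+h_2|=\e_2/(\e_1+\e_2)$ and $|h_1|/|h_1+h_2|=\e_1/(\e_1+\e_2)$ sum to $1$, so their convex combination is bounded by the maximum of the two integrals.

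There is no real obstacle; the only point requiring care is to verify that under the stated distance hypotheses, both the change of variable $y=x+h_1$ keeps us inside $\ov\Omega_2$ and the further translation $y+h_2$ stays inside $\Omega$, so that the chain of inequalities uses only values of $u$ on its domain of definition. Everything else is a direct computation.
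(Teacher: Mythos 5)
Your proof is correct and follows essentially the same route as the paper: telescoping decomposition $u(x+h_1+h_2)-u(x)=[u(x+h_1+h_2)-u(x+h_1)]+[u(x+h_1)-u(x)]$, the convexity bound $|a+b|^q\leq 2^{q-1}(|a|^q+|b|^q)$, the change of variable $y=x+h_1$ to shift the first integral from $\ov\Omega_1$ to $\ov\Omega_1+h_1\subset\ov\Omega_2$, and for the second estimate the observation that a convex combination of two quantities is bounded by their maximum. Your write-up is a bit more explicit than the paper about verifying the domain-of-definition bookkeeping, which is a nice touch, but the substance is identical.
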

\begin{proof}
By the triangle inequality and the convexity
of $g(s):=|s|^q$ we have
\begin{multline*}
\int_{\ov\Omega_1}\frac{1}{|h_1+h_2|}\Big|u\big(x+(h_1+h_2)\big)-u(x)\Big|^qdx=\\ \int_{\ov\Omega_1}\frac{1}{|h_1+h_2|}\Big|u\big(x+(h_1+h_2)\big)-u(x+h_1)+u(x+h_1)-u(x)\Big|^qdx\leq\\
\int_{\ov\Omega_1}\frac{1}{|h_1+h_2|}\bigg(\Big|u\big(x+(h_1+h_2)\big)-u(x+h_1)\Big|+\Big|u(x+h_1)-u(x)\Big|\bigg)^qdx\leq\\
\int_{\ov\Omega_1}\frac{2^{q-1}}{{|h_1+h_2|}}\Big|u\big(x+(h_1+h_2)\big)-u(x+h_1)\Big|^qdx+\int_{\ov\Omega_1}\frac{2^{q-1}}{|h_1+h_2|}\Big|u(x+h_1)-u(x)\Big|^qdx=\\
\frac{2^{q-1}|h_2|}{|h_1+h_2|}\int_{\ov\Omega_1}\frac{1}{|h_2|}\Big|u\big((x+h_1)+h_2\big)-u(x+h_1)\Big|^qdx+\frac{2^{q-1}|h_1|}{|h_1+h_2|}\int_{\ov\Omega_1}
\frac{1}{|h_1|}\Big|u(x+h_1)-u(x)\Big|^qdx\\
\leq
2^{q-1}\Bigg(\frac{|h_2|}{|h_1+h_2|}\int_{\ov\Omega_2}\frac{1}{|h_2|}\Big|u(x+h_2)-u(x)\Big|^qdx+\frac{|h_1|}{|h_1+h_2|}\int_{\ov\Omega_1}
\frac{1}{|h_1|}\Big|u(x+h_1)-u(x)\Big|^qdx\Bigg).
\end{multline*}
In particular, for every $h\in S^{N-1}$, $0<\e_1\leq
\dist(\Omega_1,\R^N\setminus\Omega_2)$ and  $0<\e_2\leq
\dist(\Omega_2,\R^N\setminus\Omega)$ we have
\begin{multline*}
\int_{\ov\Omega_1}\frac{1}{\e_1+\e_2}\Big|u\big(x+(\e_1+\e_2)h\big)-u(x)\Big|^qdx
\leq\\
2^{q-1}\Bigg(\frac{\e_2}{\e_1+\e_2}\int_{\ov\Omega_2}\frac{1}{\e_2}\Big|u(x+\e_2
h)-u(x)\Big|^qdx+\frac{\e_1}{\e_1+\e_2}\int_{\ov\Omega_1}
\frac{1}{\e_1}\Big|u(x+\e_1h)-u(x)\Big|^qdx\Bigg)\\ \leq
2^{q-1}\max\Bigg\{\int_{\ov\Omega_2}\frac{1}{\e_2}\Big|u(x+\e_2
h)-u(x)\Big|^qdx\,,\,\int_{\ov\Omega_1}
\frac{1}{\e_1}\Big|u(x+\e_1h)-u(x)\Big|^qdx\Bigg\}.
\end{multline*}
\end{proof}
\begin{lemma}\label{hjhhjKKzzbvq}
Let $\Omega\subset\R^N$ be an open set, $q\geq 1$ and let   $u\in
L^q_{loc}(\Omega,\R^d)$. Then,  for every open
$\Omega_1\subset\subset \Omega_2\subset\subset\Omega$, $\vec k\in
S^{N-1}$ and $\e$ satisfying
\begin{equation}
\label{eq:epsil}
0<\e\leq \min{\big\{\dist(\Omega_1,\R^N\setminus\Omega_2),\dist(\Omega_2,\R^N\setminus\Omega)\big\}}\,,
\end{equation}
we have
\begin{equation}\label{gghgjhfgggjfgfhughGHGHKKzzjkjkyuyuybvq}
\int_{\ov\Omega_1}\frac{1}{\e}\Big|u(x+\e\vec k)-u(x)\Big|^qdx \leq
\frac{2^{N+q}}{\mathcal{L}^N({B_1(0)})}\int_{{B_1(0)}}\int_{\ov\Omega_2}\frac{1}{\e|z|}\Big|u(x+\e
z)-u\big(x)\Big|^qdxdz.
\end{equation}
Moreover, if $u\in L^q(\R^N,\R^d)$ then
\begin{equation}\label{gghgjhfgggjfgfhughGHGHKKzzjkjkyuyuybvqkkkk}
\int_{\R^N}\frac{1}{\e}\Big|u(x+\e\vec k)-u(x)\Big|^qdx \leq
\frac{2^{N+q}}{\mathcal{L}^N({B_1(0)})}\int_{{B_1(0)}}\int_{\R^N}\frac{1}{\e|z|}\Big|u(x+\e
z)-u\big(x)\Big|^qdxdz.
\end{equation}
In particular,
\begin{equation}\label{hjhjhghgjkghggghGHGHGHKKkljghhgzzbvq}
\frac{A_{u,q}(\ov\Omega_1)}{\mathcal{L}^N({B_1(0)})}\leq
B_{u,q}\big(\ov\Omega_1\big),
\end{equation}
and
\begin{equation}\label{hjhjhghgjkghggghGHGHGHKKzzbvq}
B_{u,q}\big(\ov\Omega_1\big)\leq
2^{N+q}\frac{A_{u,q}(\ov\Omega_2)}{\mathcal{L}^N({B_1(0)})}
\end{equation}
(see Definitions \ref{gjghghghjgghGHKKzzbvqKK} and
\ref{gjghghghjgghGHKKzzbvqKKkk}). Moreover, if $u\in L^q(\R^N,\R^d)$
then
\begin{equation}\label{hjhjhghgjkghggghGHGHGHKKkljghhgzzbvqkkk}
\frac{\hat A_{u,q}(\R^N)}{\mathcal{L}^N({B_1(0)})}\leq \hat
B_{u,q}\big(\R^N\big),
\end{equation}
and
\begin{equation}\label{hjhjhghgjkghggghGHGHGHKKzzbvqkkk}
\hat B_{u,q}\big(\R^N\big)\leq 2^{N+q}\frac{\hat
A_{u,q}(\R^N)}{\mathcal{L}^N({B_1(0)})}.
\end{equation}

\end{lemma}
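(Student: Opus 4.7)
The core of the proof is the pointwise-in-$\e$ inequality \er{gghgjhfgggjfgfhughGHGHKKzzjkjkyuyuybvq}; from it everything else follows by taking suprema and limsups. The plan is to write, for an auxiliary point $z$ that will be averaged out,
$$u(x+\e\vec k)-u(x)=\bigl[u(x+\e\vec k)-u(x+\e z)\bigr]+\bigl[u(x+\e z)-u(x)\bigr],$$
apply the elementary inequality $|a+b|^q\leq 2^{q-1}(|a|^q+|b|^q)$, and then integrate $z$ over a cleverly chosen subset of $B_1(0)$.

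The crucial choice is to average over $z\in B_{1/2}(\vec k/2)$, the ball of radius $1/2$ centered at $\vec k/2$. The reason is twofold: \emph{(i)} for $z$ in this ball one has $|z|<1$ and simultaneously $|\vec k-z|<1$, so both displacements lie in $B_1(0)$; \emph{(ii)} the map $z\mapsto \vec k-z$ is a measure-preserving involution of $B_{1/2}(\vec k/2)$. For the first summand, performing the change of variables $x'=x+\e z$ in the $x$-integral — legitimate since $|\e z|\leq \e\leq \dist(\Omega_1,\R^N\setminus\Omega_2)$ guarantees $\ov\Omega_1+\e z\subset\ov\Omega_2$ — converts it into $|u(x'+\e(\vec k-z))-u(x')|^q$ integrated over $\ov\Omega_2$ with $\vec k-z\in B_1(0)$. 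Invoking the involution $z\mapsto \vec k-z$ then produces the same expression as the second summand, but over $\ov\Omega_2$.

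Combining the two pieces, dividing by $\e$, and using $|z|\leq 1$ to introduce the weight $1/|z|$, the integral over $B_{1/2}(\vec k/2)$ is enlarged to $B_1(0)$; the overall constant is $2^{q-1}\cdot 2\cdot\mathcal{L}^N(B_{1/2}(0))^{-1}=2^{N+q}/\mathcal{L}^N(B_1(0))$. This yields \er{gghgjhfgggjfgfhughGHGHKKzzjkjkyuyuybvq}. Inequality \er{gghgjhfgggjfgfhughGHGHKKzzjkjkyuyuybvqkkkk} is the obvious analog on $\R^N$, where no distance restriction is needed since the $x$-translation is harmless. Inequalities \er{hjhjhghgjkghggghGHGHGHKKzzbvq} and \er{hjhjhghgjkghggghGHGHGHKKzzbvqkkk} follow by taking $\sup_{\vec k}$ on the left (the right side being $\vec k$-independent) and then $\limsup_{\e\to 0^+}$.

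For the reverse bound \er{hjhjhghgjkghggghGHGHGHKKkljghhgzzbvq}, the plan is to use polar coordinates $z=r\vec k$, $dz=r^{N-1}\,dr\,d\mathcal{H}^{N-1}(\vec k)$ on $B_1(0)$. This gives
$$\int_{B_1(0)}\!\!\int_{\ov\Omega_1}\frac{|u(x+\e z)-u(x)|^q}{\e|z|}\,dx\,dz=\int_0^1 r^{N-1}\!\!\int_{S^{N-1}}\!\!\int_{\ov\Omega_1}\frac{|u(x+(\e r)\vec k)-u(x)|^q}{\e r}\,dx\,d\mathcal{H}^{N-1}(\vec k)\,dr;$$
bounding the inner integral by $\varphi(\e r):=\sup_{\vec k}\int_{\ov\Omega_1}\e^{-1}|u(x+\e\vec k)-u(x)|^q\,dx$ evaluated at $\e r$, and using $\mathcal{H}^{N-1}(S^{N-1})=N\mathcal{L}^N(B_1(0))$, reduces matters to showing $\limsup_{\e\to 0^+}\int_0^1 r^{N-1}\varphi(\e r)\,dr\leq \tfrac{1}{N}B_{u,q}(\ov\Omega_1)$. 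If $B_{u,q}(\ov\Omega_1)<\infty$, then $\varphi$ is eventually bounded and reverse Fatou applies; if $B_{u,q}(\ov\Omega_1)=\infty$ the assertion is trivial. Then \er{hjhjhghgjkghggghGHGHGHKKkljghhgzzbvqkkk} is the $\R^N$-version of the same computation.

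The only genuine obstacle is selecting the right averaging domain: once one spots $B_{1/2}(\vec k/2)$ as the natural set on which both $z$ and $\vec k-z$ lie in $B_1(0)$, the measure-preserving symmetry $z\mapsto\vec k-z$ makes the two halves of the triangle inequality collapse into a single integral over $\ov\Omega_2$, and the rest is a bookkeeping exercise involving the constants and the polar coordinate reversal.
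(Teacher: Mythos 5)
Your proof is correct and follows essentially the same approach as the paper. The paper also decomposes $u(x+\e\vec k)-u(x)$ via an intermediate point $x+\e z$, averages $z$ over the ball of radius $\tfrac12|\vec k|$ centered at $\tfrac12\vec k$ (which is exactly your $B_{1/2}(\vec k/2)$ when $|\vec k|=1$), and exploits the measure-preserving involution $z\mapsto\vec k-z$; the only cosmetic difference is that the paper factors this through a separate lemma (Lemma~\ref{gughfgfhfgdgddffddfKKzzbvq}) with the explicit weights $|z|/|\vec k|$, $|\vec k-z|/|\vec k|$, which are then bounded by $1$, whereas you insert the weight $1/|z|$ directly at the end using $|z|\leq 1$ — both give the constant $2^{N+q}/\mathcal{L}^N(B_1(0))$. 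For the reverse inequality \eqref{hjhjhghgjkghggghGHGHGHKKkljghhgzzbvq}, the paper merely asserts it is ``clear from the definitions''; your polar-coordinate argument with reverse Fatou makes that assertion explicit and is the intended reasoning.
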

\begin{proof}
Inequalities \er{hjhjhghgjkghggghGHGHGHKKkljghhgzzbvq} and
\er{hjhjhghgjkghggghGHGHGHKKkljghhgzzbvqkkk} are clear from the
definitions. Next, by \er{gghgjhfgggjfgfhughGHGHGHjgghKKzzbvq} we
have: for every $\vec k\in \R^N\setminus\{0\}$ such that $|\vec
k|\leq 1$, for every $z\in\R^N$ such that $\big|z-\frac{1}{2}\vec
k\big|<\frac{1}{2}|\vec k|$, $\Omega_1\subset\subset
\Omega_2\subset\subset\Omega$ and $\e$ satisfying \eqref{eq:epsil}
there holds
\begin{multline}\label{gghgjhfgggjfgfhughGHKKzzbvq}
\int_{\ov\Omega_1}\frac{1}{\e|\vec k|}\Big|u(x+\e\vec
k)-u(x)\Big|^qdx \leq\\ \frac{2^{q-1}|z|}{|\vec
k|}\int_{\ov\Omega_2}\frac{1}{\e| z|}\Big|u(x+\e
z)-u\big(x)\Big|^qdx+\frac{2^{q-1}|\vec k-z|}{|\vec
k|}\int_{\ov\Omega_1}\frac{1}{\e|\vec k-z|}\Big|u\big(x+\e(\vec
k-z)\big)-u(x)\Big|^qdx\,.
\end{multline}
Since the inequality $\big|z-\frac{1}{2}\vec
k\big|<\frac{1}{2}|\vec k|$ implies the inequalities $|z|<|\vec k|$ and
$|\vec k-z|<|\vec k|$, we have by \er{gghgjhfgggjfgfhughGHKKzzbvq}:
\begin{multline}\label{gghgjhfgggjfgfhughGHGHKKzzbvq}
\int_{\ov\Omega_1}\frac{1}{\e|\vec k|}\Big|u(x+\e\vec
k)-u(x)\Big|^qdx \leq\\
\frac{2^{q-1}}{\mathcal{L}^N(\{z\in\R^N:\,\big|z-\frac{1}{2}\vec
k\big|<\frac{1}{2}|\vec
k|\})}\int_{\{z\in\R^N:\,\big|z-\frac{1}{2}\vec
k\big|<\frac{1}{2}|\vec k|\}}\int_{\ov\Omega_2}\frac{1}{\e|
z|}\Big|u(x+\e
z)-u\big(x)\Big|^qdxdz\\+\frac{2^{q-1}}{\mathcal{L}^N(\{z\in\R^N:\,\big|z-\frac{1}{2}\vec
k\big|<\frac{1}{2}|\vec
k|\})}\int_{\{z\in\R^N:\,\big|z-\frac{1}{2}\vec
k\big|<\frac{1}{2}|\vec
k|\}}\int_{\ov\Omega_1}\frac{\Big|u\big(x+\e(\vec
k-z)\big)-u(x)\Big|^q}{\e|\vec
k-z|}dxdz\\
\leq\frac{2^{N+q-1}}{|\vec
k|^N\mathcal{L}^N({B_1(0)})}\int_{\{z\in\R^N:\,|z|<|\vec
k|\}}\int_{\ov\Omega_2}\frac{1}{\e| z|}\Big|u(x+\e
z)-u\big(x)\Big|^qdxdz\\+\frac{2^{N+q-1}}{|\vec
k|^N\mathcal{L}^N({B_1(0)})}\int_{\{z\in\R^N:\,|\vec k-z|<|\vec
k|\}}\int_{\ov\Omega_1}\frac{1}{\e|\vec k-z|}\Big|u\big(x+\e (\vec k-z)\big)-u(x)\Big|^qdxdz=\\
\frac{2^{N+q}}{|\vec
k|^N\mathcal{L}^N({B_1(0)})}\int_{\{z\in\R^N:\,|z|<|\vec
k|\}}\int_{\ov\Omega_2}\frac{1}{\e|
z|}\Big|u(x+\e z)-u\big(x)\Big|^qdxdz=\\
\frac{2^{N+q}}{\mathcal{L}^N({B_1(0)})}\int_{{B_1(0)}}\int_{\ov\Omega_2}\frac{1}{\e|\vec
k||z|}\Big|u(x+\e|\vec k|z)-u\big(x)\Big|^qdxdz\,,
\end{multline}
and \er{gghgjhfgggjfgfhughGHGHKKzzjkjkyuyuybvq} follows.
Moreover, if $u\in L^q(\R^N,\R^d)$ then taking the supremum of
\er{gghgjhfgggjfgfhughGHGHKKzzjkjkyuyuybvq} over all
$\O_1\subset\subset\Omega_2\subset\subset\R^N$ we deduce
\er{gghgjhfgggjfgfhughGHGHKKzzjkjkyuyuybvqkkkk}.

Finally, from \er{gghgjhfgggjfgfhughGHGHKKzzbvq} we deduce that for every $0<\rho\leq
1$ we have
\begin{multline}\label{gghgjhfgggjfgfhughGHGHyuuyGHKKzzbvq}
\limsup\limits_{\e\to 0^+}\Bigg(\sup\limits_{\{\vec k\in\R^N:\,|\vec
k|=\rho\}}\bigg(\int_{\ov\Omega_1}\frac{1}{\e\rho}\Big|u(x+\e\vec
k)-u(x)\Big|^qdx\bigg)\Bigg) \leq\\
\frac{2^{N+q}}{\mathcal{L}^N({B_1(0)})}\limsup\limits_{\e\to
0^+}\Bigg(\int_{{B_1(0)}}\int_{\ov\Omega_2}\frac{1}{\e\rho|z|}\Big|u(x+\e\rho
z)-u\big(x)\Big|^qdxdz\Bigg)\,,
\end{multline}
which clearly implies \er{hjhjhghgjkghggghGHGHGHKKzzbvq}. Moreover,
if $u\in L^q(\R^N,\R^d)$ then by
\er{gghgjhfgggjfgfhughGHGHKKzzjkjkyuyuybvqkkkk} we have:
\begin{multline}\label{gghgjhfgggjfgfhughGHGHKKzzjkjkyuyuybvqkkkkjjkkk}
\limsup\limits_{\e\to 0^+}\Bigg(\sup\limits_{\{\vec k\in\R^N:\,|\vec
k|=1\}}\bigg(\int_{\R^N}\frac{1}{\e}\Big|u(x+\e\vec
k)-u(x)\Big|^qdx\bigg)\Bigg)\\ \leq
\frac{2^{N+q}}{\mathcal{L}^N({B_1(0)})}\limsup\limits_{\e\to
0^+}\Bigg(\int_{{B_1(0)}}\int_{\R^N}\frac{1}{\e|z|}\Big|u(x+\e
z)-u\big(x)\Big|^qdxdz\Bigg),
\end{multline}
which clearly implies \er{hjhjhghgjkghggghGHGHGHKKzzbvqkkk}.
\end{proof}
 The next Proposition is a key ingredient in the proof of
 Theorem~\ref{ghgghgghjjkjkzzbvq}.
\begin{proposition}\label{hgugghghhffhfhKKzzbvq}
Let $\Omega\subset\R^N$ be an open set and let
$W(a,b):\R^d\times\R^d\to\R$ be a nonnegative continuously
differentiable function,
which satisfies $W(a,a)=0$ and $W(a,b)=W(b,a)$ for every
$a,b\in\R^d$.
Let $u\in BV_{loc}(\Omega,\R^d)\cap L^\infty_{loc}(\Omega,\R^d)$.
Then, for every compact set $K\subset\subset\Omega$
such that $\|Du\|(\partial K)=0$ and any vector $\vec
k\subset\R^N$ we have
\begin{equation}\label{fgyufghfghjgghgjkhkkGHKKzzbvq}
\lim_{t\to0^+}\frac{1}{t}\int_KW\Big(u(x+t\vec
k),u(x)\Big)dx=\int_{J_u\cap K}W\Big(u^+(x),u^-(x)\Big)\big|\vec
k\cdot\vec\nu(x)\big|d\mathcal{H}^{N-1}(x).
\end{equation}
In particular, for $q>1$ we have
\begin{equation}\label{fgyufghfghjgghgjkhkkGHGHKKzzbvq}
\lim_{t\to0^+}\frac{1}{t}\int_K\Big|u(x+t\vec
k)-u(x)\Big|^qdx=\int_{J_u\cap
K}\Big|u^+(x)-u^-(x)\Big|^q\big|\vec
k\cdot\vec\nu(x)\big|d\mathcal{H}^{N-1}(x),
\end{equation}
and
\begin{multline}\label{fgyufghfghjgghgjkhkkGHGHKKjjjjkjkkjzzbvq}
A_{u,q}\big(K\big)=\lim\limits_{t\to
0^+}\int_{{B_1(0)}}\int_K\frac{1}{t|z|}\Big|u(x+t
z)-u(x)\Big|^qdxdz\\=C_N\bigg(\int_{J_u\cap
K}\Big|u^+(x)-u^-(x)\Big|^qd\mathcal{H}^{N-1}(x)\bigg),
\end{multline}
with $C_N$ defined in
\er{fgyufghfghjgghgjkhkkGHGHKKggkhhjoozzbvqkk}.

\end{proposition}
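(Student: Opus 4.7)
The plan is to reduce the statement to a one-dimensional convergence result for BV functions via the slicing theorem, and then integrate over slice directions. If $\vec k=0$ both sides of \er{fgyufghfghjgghgjkhkkGHKKzzbvq} vanish, so I may assume $\vec k\neq 0$ and write $\vec k=r\hat k$ with $r=|\vec k|$. Decomposing $x=y+s\hat k$ with $y\in\hat k^\perp$ and $s\in\R$, Fubini gives
\[
\frac{1}{t}\int_K W(u(x+t\vec k),u(x))\,dx=\int_{\hat k^\perp}\frac{1}{t}\int_{K_y}W\bigl(g_y(s+tr),g_y(s)\bigr)\,ds\,d\mathcal{H}^{N-1}(y),
\]
where $g_y(s):=u(y+s\hat k)$ and $K_y:=\{s\in\R:\,y+s\hat k\in K\}$. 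By the BV slicing theorem, for $\mathcal{H}^{N-1}$-a.e.\ $y$ the function $g_y$ is in $BV_{loc}(\R,\R^d)$, $|Dg_y|(\partial K_y)=0$ (thanks to the hypothesis $\|Du\|(\partial K)=0$), and the jump part satisfies the slicing identity
\[
\int_{\hat k^\perp}\sum_{s_0\in J_{g_y}\cap K_y}\varphi\bigl(g_y(s_0+),g_y(s_0-)\bigr)\,d\mathcal{H}^{N-1}(y)=\int_{J_u\cap K}\varphi(u^+,u^-)\,|\hat k\cdot\vec\nu|\,d\mathcal{H}^{N-1}
\]
for every nonnegative Borel $\varphi$.

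The core one-dimensional statement to establish is: for every $g\in BV(I,\R^d)\cap L^\infty(I,\R^d)$ on a bounded open interval $I$ with $|Dg|(\partial I)=0$,
\[
\lim_{\tau\to 0^+}\frac{1}{\tau}\int_I W(g(s+\tau),g(s))\,ds=\sum_{s_0\in J_g\cap I}W\bigl(g(s_0+),g(s_0-)\bigr).
\]
Since $W\in C^1$, $W\geq 0$ and $W(a,a)=0$, both partial gradients of $W$ vanish along the diagonal; hence on the bounded range of $g$ there exist a constant $C$ and a modulus $\omega(\delta)\to 0^+$ such that $W(a,b)\leq C|a-b|$ and $W(a,b)\leq\omega(|a-b|)|a-b|$. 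Splitting the domain into the $\tau$-neighborhood of the (at most countable) jump set and its complement, the near-jump part contributes $\sum W(g(s_0+),g(s_0-))$ in the limit: on the interval of length $\tau$ immediately to the left of each $s_0\in J_g$, the integrand tends pointwise to $W(g(s_0+),g(s_0-))$. The complementary part is bounded by $\omega(\cdot)\cdot|Dg|([s,s+\tau])/\tau$, and its integral vanishes by Fubini, since $\int_I|Dg|([s,s+\tau])/\tau\,ds$ is uniformly controlled by $|Dg|$ of a neighborhood of $I$ while the modulus factor tends to $0$ at every continuity point of $g$.

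Combining the one-dimensional result with the slicing identity and dominated convergence in $y$---with dominating function $Cr\,|Dg_y|\bigl(K_y+(-r,r)\bigr)$, integrable in $y$ by the slicing identity for $|Du|$---yields \er{fgyufghfghjgghgjkhkkGHKKzzbvq}. The formula \er{fgyufghfghjgghgjkhkkGHGHKKzzbvq} is then the special case $W(a,b)=|a-b|^q$, $q>1$. Finally, for \er{fgyufghfghjgghgjkhkkGHGHKKjjjjkjkkjzzbvq}, I apply \er{fgyufghfghjgghgjkhkkGHGHKKzzbvq} pointwise in $z\in B_1(0)$ with $\vec k=z$, divide by $|z|$, and interchange the $t\to 0^+$ limit with integration over $z$ by dominated convergence, using the uniform bound
\[
\frac{1}{t|z|}\int_K|u(x+tz)-u(x)|^q\,dx\leq C\|u\|_{L^\infty}^{q-1}\,\|Du\|\bigl(K+B_1(0)\bigr).
\]
The constant emerges from
\[
\int_{B_1(0)}\frac{|z\cdot\vec\nu|}{|z|}\,dz=\frac{1}{N}\int_{S^{N-1}}|\omega\cdot\vec\nu|\,d\mathcal{H}^{N-1}(\omega)=C_N,
\]
evaluated in polar coordinates and independent of $\vec\nu$ by rotational symmetry. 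The main technical obstacle is the one-dimensional step and ensuring that $\|Du\|(\partial K)=0$ transfers to $|Dg_y|(\partial K_y)=0$ for $\mathcal{H}^{N-1}$-a.e.\ slice $y$, so that the jump terms can be isolated cleanly on each line.
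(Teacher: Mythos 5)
Your strategy is genuinely different from the paper's. The paper mollifies $u$ to $u_\e$, uses the Fundamental Theorem of Calculus and integration by parts to convert $\frac1t\int_K W(u(x+t\vec k),u(x))\,dx$ into an integral against $d[Du\cdot\vec k]$, sends $\e\to 0^+$ first and then $t\to 0^+$, and finally invokes the directional blow-up results of Theorems 3.108--3.109 from Ambrosio--Fusco--Pallara together with Theorem~\ref{vtTh3} to kill the diffuse (a.c.\ and Cantor) part. You instead reduce to a one-dimensional statement via BV slicing and then integrate over slice lines. Both routes are legitimate; yours is arguably more elementary in structure, but it shifts the whole difficulty onto the one-dimensional lemma, and that is where your sketch has a real gap.

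The problematic step is the treatment of the ``complementary part.'' You argue that its integral vanishes because $\int_I |Dg|([s,s+\tau])/\tau\,ds$ is bounded while the modulus factor $\omega(|g(s+\tau)-g(s)|)$ tends to $0$ at every continuity point of $g$. Pointwise convergence of the integrand against a merely bounded (and in general tight-to-$J_g$-and-Cantor-set) sequence of measures $\mu_\tau(ds)=\frac{1}{\tau}|Dg|([s,s+\tau])\,ds$ does not imply the integrals vanish: $\mu_\tau$ may concentrate mass precisely where $\omega(\cdot)$ is not yet small. What actually saves the argument is a stronger, \emph{uniform} statement: on $I\setminus N_\tau$ (the complement of the left-$\tau$-neighborhoods of jumps) one has $g(s+\tau)-g(s)=D^\mu g((s,s+\tau])$, and since the diffuse part $|D^\mu g|$ is a finite atomless measure, $\sup_s |D^\mu g|((s,s+\tau])\to 0$ as $\tau\to 0^+$. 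You need to invoke this explicitly; the ``pointwise at continuity points'' version you wrote would fail. A similar care point arises in the near-jump part: the left-$\tau$-neighborhoods of the (countable, possibly clustering) jump set can overlap, so the naive sum over $s_0\in J_g$ may double-count. The clean way is to truncate to the finitely many jumps of size $>\delta$ (whose $\tau$-neighborhoods are eventually disjoint), treat the remaining small-jump measure together with the diffuse part via the uniform atomless estimate, and then let $\delta\to 0^+$. Both fixes are standard, but neither is present in the sketch, and without them the one-dimensional limit as stated is not established.

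The rest of the reduction---the transfer of $\|Du\|(\partial K)=0$ to $|Dg_y|(\partial K_y)=0$ for a.e.\ $y$ (which uses $\partial(K_y)\subset(\partial K)_y$ together with the slicing identity for $|Du\cdot\hat k|$), the dominated-convergence argument in $y$ with the dominant $Cr|Dg_y|(K_y+(-r,r))$, and the final integration over $z\in B_1(0)$ with the polar-coordinate identity $\int_{B_1(0)}|z\cdot\vec\nu|/|z|\,dz=C_N$---are all correct, so the architecture is sound once the one-dimensional lemma is proved rigorously.
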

\begin{proof}
Let  $\eta(z)\in C^\infty_c(\R^N,\R)$ be a radial function such that
$\eta\geq 0$, $supp\,\eta\subset B_1(0)$ and
$\int_{\R^N}\eta(z)dz=1$. For every $x\in\Omega$ and every
$0<\e<\dist(x,\partial\Omega)$ define
\begin{equation}\label{jmvnvnbccbvhjhjhhjjkhgjgGHKKzzbvq}
u_\e(x):=\frac{1}{\e^N}\int_{\R^N}\eta\Big(\frac{y-x}{\e}\Big)u(y)dy=\int_{\R^N}\eta(z)u(x+\e
z)dz=\int_{B_1(0)}\eta(z)u(x+\e z)dz.
\end{equation}
Then, following definition \ref{defjac889878}, we have
\begin{equation}\label{jmvnvnbccbvhjhjhhjjkhgjgGHKKhugjzzbvq}
\lim_{\e\to0^+}u_\e(x)=\bar u(x):=\begin{cases} \tilde u(x)\quad
x\in\Omega\setminus J_u\\ \frac{1}{2}\big(u^+(x)+u^-(x)\big)\quad
x\in J_u\end{cases}\quad\quad\text{for}\;\;
\mathcal{H}^{N-1}{-a.e.}\;\;x\in\Omega.
\end{equation}
Moreover, since there exist two open sets $U_1\subset\subset
U_2\subset\subset\Omega$ such that $K\subset\subset U_1$ and $u\in
L^\infty(U_2,\R^d)$, we deduce that there exist constants $M>0$ and
$\e_0>0$, such that
\begin{equation}\label{jmvnvnbccbvhjhjhhjjkhgjgGHKKhugjzzbvqihhjhjkhkll}
\begin{cases}
\big|u_\e(x)\big|\leq M\quad\quad\forall x\,\in U_1,\;\forall\,
\e\in(0,\e_0),\\
\big|\bar u(x)\big|\leq M\quad\quad\text{for}\;\;
\mathcal{H}^{N-1}{-a.e.}\;\;x\in U_1.
\end{cases}
\end{equation}
Then, denoting for any $s\in[0,1]$, $x\in \Omega$ and $\vec
k\subset\R^N$
\begin{equation*}
P_t(u_\e,x,s,\vec k)=su_\e(x+t\vec k)+(1-s)u_\e(x)\,,
\end{equation*}
using the Dominated Convergence Theorem, the Fundamental
Theorem of Calculus and finally
\er{jmvnvnbccbvhjhjhhjjkhgjgGHKKzzbvq}, we get for small $t>0$,
\begin{multline}\label{fgyufghfghGHKK1zzbvqgfgjh}
I_t:=\frac{1}{t}\int_KW\Big(u(x+t\vec
k),u(x)\Big)dx=\frac{1}{t}\lim_{\e\to0+}\int_K W\Big(u_\e(x+t\vec
k),u_\e(x)\Big)dx=\\ \frac{1}{t}\lim_{\e\to0+}\int_K\int_0^1
\nabla_a W\Big( P_t(u_\e,x,s,\vec k),u_\e(x)\Big)\cdot\big(u_\e(x+t\vec k)-u_\e(x)\big)dsdx=\\
\lim_{\e\to0+}\int\limits_K\int\limits_0^1 \nabla_a W\Big(
P_t(u_\e,x,s,\vec k),u_\e(x)\Big)\cdot
\bigg(\int\limits_{\R^N}\bigg\{\eta\Big(\frac{y-x-t\vec
k}{\e}\Big)-\eta\Big(\frac{y-x}{\e}\Big)\bigg\}\frac{u(y)}{t\e^N}dy\bigg)dsdx.
\end{multline}
Next, by \er{fgyufghfghGHKK1zzbvqgfgjh}, the Fundamental
Theorem of Calculus, Fubini theorem and integration
by parts we obtain,
\begin{multline}\label{fgyufghfghGHKK1zzbvqojjj}
I_t=\\ -\lim_{\e\to0+}\int\limits_K\int\limits_0^1 \nabla_a W\Big(
P_t(u_\e,x,s,\vec k)
,u_\e(x)\Big)\cdot\frac{1}{\e^{N+1}}\bigg(\int\limits_{\R^N}\bigg\{\int\limits_0^1\vec
k\cdot\nabla\eta
\Big(\frac{y-x-\tau t\vec
k}{\e}\Big)d\tau\bigg\}u(y)dy\bigg)dsdx\\=
\lim_{\e\to0+}\int_0^1\int_K\int_0^1 \nabla_a W\Big(
P_t(u_\e,x,s,\vec
k),u_\e(x)\Big)\cdot\frac{1}{\e^{N}}\bigg(\int_{\R^N}\eta\Big(\frac{y-x-\tau
t\vec k}{\e}\Big)d\Big[
Du(y)\cdot\vec k\Big]\bigg)dsdxd\tau.
\end{multline}
By \er{fgyufghfghGHKK1zzbvqojjj}, using Fubini Theorem, we
deduce for small $t>0$,
\begin{multline}\label{fgyufghfghGHKK1zzbvq}
I_t=\\
\lim_{\e\to0+}\int_{\R^N}\frac{1}{\e^{N}}\Bigg\{\int_0^1\int_K\int_0^1
\eta\Big(\frac{y-x-\tau t\vec k}{\e}\Big)\nabla_a W\Big(
P_t(u_\e,x,s,\vec k),u_\e(x)\Big)dsdxd\tau\Bigg\}\cdot d\Big[
Du (y)\cdot\vec k\Big]\\=
\lim_{\e\to0+}\int_{\R^N}\frac{1}{\e^{N}}\Bigg\{\int_0^1\int_K\int_0^1
\eta\Big(\frac{x-y-\tau t\vec k}{\e}\Big)\nabla_a W\Big(
P_t(u_\e,y,s,\vec k),u_\e(y)\Big)dsdyd\tau\Bigg\}\cdot d\Big[ Du
(x)\cdot\vec k\Big].
\end{multline}
Performing a change of variables on the r.h.s. of
\er{fgyufghfghGHKK1zzbvq}, using Fubini theorem and denoting for
short
\begin{equation}\label{gjhgjhfghfh}
y=y(\e,x,z,t,\tau,\vec k)=x-\e z-\tau t\vec k,
\end{equation}
we infer
\begin{multline}\label{fgyufghfghGHKKzzbvqhkkhhkjk}
I_t=\lim\limits_{\e\to0+}\int\limits_0^1\int\limits_{\R^N}\eta(z)
\Bigg(\int\limits_{K+\tau t\vec k+\e z}\Bigg\{
\int\limits_0^1\nabla_a W\Big(
P_t(u_\e,y,s,\vec k) ,u_\e(y)\Big)ds\Bigg\}\cdot
d\Big[
Du (x)\cdot\vec
k\Big]\Bigg)dzd\tau\\=O\bigg(\big\|Du\big\|\Big(\cup_{\tau\in[0,1]}\big(\partial
K+\tau
t\vec k\big)\Big)\bigg)+\\
\lim\limits_{\e\to0+}\int\limits_0^1\int\limits_{\R^N}\eta(z)
\Bigg(\int\limits_{K+\tau t\vec k}\Bigg\{ \int\limits_0^1\nabla_a
W\Big(
P_t(u_\e,y,s,\vec k) ,u_\e(y)\Big)ds\Bigg\}\cdot d\Big[ Du
(x)\cdot\vec k\Big]\Bigg)dzd\tau
\\=O\bigg(\big\|Du\big\|\Big(\cup_{\tau\in[0,1]}\big(\partial
K+\tau
t\vec k\big)\Big)\bigg)+\\
\lim\limits_{\e\to0+}\int\limits_0^1 \Bigg(\int\limits_{K+\tau t\vec
k}\Bigg\{ \int\limits_0^1\int\limits_{\R^N}\eta(z)\nabla_a W\Big(
P_t(u_\e,y,s,\vec k) ,u_\e(y)\Big)dzds\Bigg\}\cdot d\Big[ Du
(x)\cdot\vec k \Big]\Bigg)d\tau.
\end{multline}
Using the easy to check fact that
\begin{multline}
\label{ghuutuytjlj} \int_{D_{\tau}\cap J_u}d\Big| Du
(x)\cdot\vec k\Big|=\int_{D_{\tau}\cap
J_u}\Big|u^+(x)-u^-(x)\Big|\,\big|\vec k\cdot\vec
\nu(x)\big|\,d\mathcal{H}^{N-1}(x) =0\quad\text{for
a.e.}\;\,\tau\in(0,1),\\
\quad\text{where}\quad D_{\tau}:=
\big(J_u+\tau t\vec k\big)\cup\big(J_u-(1-\tau) t\vec k\big),
\end{multline}
we decompose \er{fgyufghfghGHKKzzbvqhkkhhkjk} as:
\begin{multline}\label{fgyufghfghGHKKzzbvqjjkjkjljljkjkjh}
I_t=O\bigg(\big\|Du\big\|\Big(\cup_{\tau\in[0,1]}\big(\partial
K+\tau
t\vec k\big)\Big)\bigg)+\\
\lim\limits_{\e\to0+}\int\limits_0^1 \Bigg(\int\limits_{(K+\tau
t\vec k)\cap J_u}\Bigg\{
\int\limits_0^1\int\limits_{\R^N}\eta(z)\nabla_a W\Big(
P_t(u_\e,y,s,\vec k) ,u_\e(y)\Big)dzds\Bigg\}\cdot d\Big[ Du
(x)\cdot\vec k \Big]\Bigg)d\tau\\
+ \lim\limits_{\e\to0+}\int\limits_0^1 \Bigg(\int\limits_{(K+\tau
t\vec k)\setminus J_u}\Bigg\{
\int\limits_0^1\int\limits_{\R^N}\eta(z)\nabla_a W\Big(
P_t(u_\e,y,s,\vec k) ,u_\e(y)\Big)dzds\Bigg\}\cdot d\Big[ Du
(x) \cdot\vec k\Big]\Bigg)d\tau\\
=O\bigg(\big\|Du\big\|\Big(\cup_{\tau\in[0,1]}\big(\partial K+\tau
t\vec k\big)\Big)\bigg)+\\
\lim\limits_{\e\to0+}\int\limits_0^1 \Bigg(\int\limits_{((K+\tau
t\vec k)\cap J_u)\setminus D_\tau}\Bigg\{
\int\limits_0^1\int\limits_{\R^N}\eta(z)\nabla_a W\Big(
P_t(u_\e,y,s,\vec k) ,u_\e(y)\Big)dzds\Bigg\}\cdot d\Big[ Du
(x)\cdot\vec k \Big]\Bigg)d\tau\\
+ \lim\limits_{\e\to0+}\int\limits_0^1 \Bigg(\int\limits_{(K+\tau
t\vec k)\setminus (J_u\cup D_\tau)}\Bigg\{
\int\limits_0^1\int\limits_{\R^N}\eta(z)\nabla_a W\Big(
P_t(u_\e,y,s,\vec k) ,u_\e(y)\Big)dzds\Bigg\}\cdot d\Big[ Du
(x)\cdot\vec k \Big]\Bigg)d\tau\\
+ \lim\limits_{\e\to0+}\int\limits_0^1 \Bigg(\int\limits_{((K+\tau
t\vec k)\cap D_\tau)\setminus J_u}\Bigg\{
\int\limits_0^1\int\limits_{\R^N}\eta(z)\nabla_a W\Big(
P_t(u_\e,y,s,\vec k) ,u_\e(y)\Big)dzds\Bigg\}\cdot d\Big[ Du
(x)\cdot\vec k \Big]\Bigg)d\tau.
\end{multline}
On the other hand, by \er{jmvnvnbccbvhjhjhhjjkhgjgGHKKzzbvq} we
obtain that for $\mathcal{H}^{N-1}{-a.e.}\;\;x\in\Omega\setminus
J_u$, for every $z\in\R^N$ and for every small $\e>0$ we have
\begin{multline}\label{jmvnvnbccbvhjhjhhjjkhgjgGHKKzzbvqjljj}
\Big|u_\e(x-\e z)-\tilde
u(x)\Big|=\bigg|\int_{\R^N}\eta(y)\Big(u\big(x-\e z+\e y\big)-\tilde
u(x)\Big)dy\bigg|=\int_{\R^N}\eta(y+z)\Big(u\big(x+\e y\big)-\tilde
u(x)\Big)dy\bigg|\\=\bigg|\int_{B_{(1+|z|)}(0)}\eta(y+z)\Big(u\big(x+\e
y\big)-\tilde u(x)\Big)dy\bigg|\leq
\Big(\sup_{y\in\R^N}\big|\eta(y)\big|\Big)\bigg(\int_{B_{(1+|z|)}(0)}\Big|u\big(x+\e
y\big)-\tilde u(x)\Big|dy\bigg).
\end{multline}
Then, by the definition of the approximate limit, for every
$z\in\R^N$ we deduce
\begin{equation}\label{jmvnvnbccbvhjhjhhjjkhgjgGHKKhugjzzbvqlkjkllj}
\lim_{\e\to0^+}u_\e(x-\e z)= \tilde u(x)=\bar u(x)\quad
\quad\text{for}\;\; \mathcal{H}^{N-1}{-a.e.}\;\;x\in\Omega\setminus
J_u\,
\end{equation}
(where $\bar u(x)$ was defined in
\er{jmvnvnbccbvhjhjhhjjkhgjgGHKKhugjzzbvq}). In particular, by
\er{jmvnvnbccbvhjhjhhjjkhgjgGHKKhugjzzbvqlkjkllj} for every small
$t>0$ and for every $\tau\in(0,1)$ we have:
\begin{equation}\label{jmvnvnbccbvhjhjhhjjkhgjgGHKKhugjzzbvqlkjklljjkhkhkkk}
\lim_{\e\to0^+}u_\e\big(x-\e z-\tau t\vec k\big)=\bar u\big(x-\tau
t\vec k\big)\quad \quad\text{for}\;\;
\mathcal{H}^{N-1}{-a.e.}\;\;x\in\big(K+\tau t\vec k\big)\setminus
\big(J_u+\tau t\vec k\big)\end{equation} and
\begin{multline}\label{jjhhkhkhjojojojkkk}
\lim_{\e\to0^+}u_\e\Big(\big(x-\e z-\tau t\vec k\big)+t\vec
k\Big)=\bar u\Big(x+(1-\tau) t\vec k\Big)\\ \text{for}\;\;
\mathcal{H}^{N-1}{-a.e.}\;\;x\in\big(K+\tau t\vec k\big)\setminus
\Big(J_u-(1-\tau) t\vec k\Big).
\end{multline}
Then, using
\er{jmvnvnbccbvhjhjhhjjkhgjgGHKKhugjzzbvqlkjklljjkhkhkkk},
\er{jjhhkhkhjojojojkkk},
\er{jmvnvnbccbvhjhjhhjjkhgjgGHKKhugjzzbvqihhjhjkhkll}, Dominated
Convergence and \er{ghuutuytjlj}, yields
\begin{multline}\label{fgyufghfghGHKKzzbvqjjkjknghfmnmn}
\lim\limits_{\e\to0+}\int\limits_0^1 \Bigg(\int\limits_{((K+\tau
t\vec k)\cap J_u)\setminus D_\tau}\Bigg\{
\int\limits_0^1\int\limits_{\R^N}\eta(z)\nabla_a W\Big(
P_t(u_\e,y,s,\vec k) ,u_\e(y)\Big)dzds\Bigg\}\cdot d\Big[ Du
(x)\cdot\vec k \Big]\Bigg)d\tau=\\
\int\limits_0^1\int\limits_{((K+\tau t\vec k)\cap J_u)\setminus
D_\tau}\Bigg\{\int\limits_0^1\nabla_a W\Big(s\bar
u\big(x+(1-\tau)t\vec k\big)+(1-s)\bar u(x-\tau t\vec k),\bar
u(x-\tau t\vec k)\Big)ds\Bigg\}\cdot d\Big[Du
(x)\cdot\vec k\Big]d\tau
\\=
\int\limits_0^1\int\limits_{(K+\tau t\vec k)\cap
J_u}\Bigg\{\int\limits_0^1\nabla_a W\Big(s\bar u\big(x+(1-\tau)t\vec
k\big)+(1-s)\bar u(x-\tau t\vec k),\bar u(x-\tau t\vec
k)\Big)ds\Bigg\}\cdot d\Big[Du
(x)\cdot\vec k\Big]d\tau.
\end{multline}
Similarly, using
\er{jmvnvnbccbvhjhjhhjjkhgjgGHKKhugjzzbvqlkjklljjkhkhkkk},
\er{jjhhkhkhjojojojkkk},
\er{jmvnvnbccbvhjhjhhjjkhgjgGHKKhugjzzbvqihhjhjkhkll} and Dominated
Convergence yields
\begin{multline}\label{fgyufghfghGHKKzzbvqhjjgjgjg}
\lim\limits_{\e\to0+}\int\limits_0^1 \Bigg(\int\limits_{(K+\tau
t\vec k)\setminus (J_u\cup D_\tau)}\Bigg\{
\int\limits_0^1\int\limits_{\R^N}\eta(z)\nabla_a W\Big(
P_t(u_\e,y,s,\vec k) ,u_\e(y)\Big)dzds\Bigg\}\cdot d\Big[ Du
(x)\cdot\vec k \Big]\Bigg)d\tau=\\
\int\limits_0^1\int\limits_{(K+\tau t\vec k)\setminus (J_u\cup
D_\tau)}\Bigg\{\int\limits_0^1\nabla_a W\Big(s\bar
u\big(x+(1-\tau)t\vec k\big)+(1-s)\bar u(x-\tau t\vec k),\bar
u(x-\tau t\vec k)\Big)ds\Bigg\}\cdot d\Big[Du
(x)\cdot\vec k\Big]d\tau.
\end{multline}
On the other hand, since the set $(K+\tau t\vec k)\cap D_\tau$ is
$\mathcal{H}^{N-1}$ $\sigma$-finite, by Theorem \ref{vtTh3}
we have
\begin{equation}
\label{ghuutuytjljkpjkjjjj} \int_{((K+\tau t\vec k)\cap
D_\tau)\setminus J_u}d\big| Du
(x)\cdot\vec k\big|=0,
\end{equation}
and in particular,
\begin{multline}\label{fguyyytyutytu}
\lim\limits_{\e\to0+}\int\limits_0^1 \Bigg(\int\limits_{((K+\tau
t\vec k)\cap D_\tau)\setminus J_u}\Bigg\{
\int\limits_0^1\int\limits_{\R^N}\eta(z)\nabla_a W\Big(
P_t(u_\e,y,s,\vec k) ,u_\e(y)\Big)dzds\Bigg\}\cdot d\Big[ Du
(x)\cdot\vec k \Big]\Bigg)d\tau=0.
\end{multline}
Thus, inserting \er{fgyufghfghGHKKzzbvqjjkjknghfmnmn},
\er{fgyufghfghGHKKzzbvqhjjgjgjg} and \er{fguyyytyutytu} into
\er{fgyufghfghGHKKzzbvqjjkjkjljljkjkjh} yields
\begin{multline}\label{fgyufghfghGHKKzzbvqiojoj}
I_t
=O\bigg(\big\|Du\big\|\Big(\cup_{\tau\in[0,1]}\big(\partial K+\tau
t\vec k\big)\Big)\bigg)+\\
\int\limits_0^1\int\limits_{(K+\tau t\vec k)\cap
J_u}\Bigg\{\int\limits_0^1\nabla_a W\Big(s\bar u\big(x+(1-\tau)t\vec
k\big)+(1-s)\bar u(x-\tau t\vec k),\bar u(x-\tau t\vec
k)\Big)ds\Bigg\}\cdot d\Big[Du(x)\cdot\vec k\Big]d\tau+\\
\int\limits_0^1\int\limits_{(K+\tau t\vec k)\setminus (J_u\cup
D_\tau)}\Bigg\{\int\limits_0^1\nabla_a W\Big(s\bar
u\big(x+(1-\tau)t\vec k\big)+(1-s)\bar u(x-\tau t\vec k),\bar
u(x-\tau t\vec k)\Big)ds\Bigg\}\cdot d\Big[Du
(x)\cdot\vec k\Big]d\tau.
\end{multline}
Then, using again \er{ghuutuytjljkpjkjjjj} in
\er{fgyufghfghGHKKzzbvqiojoj} we get
\begin{multline}\label{fgyufghfghGHKKzzbvq}
I_t
=O\bigg(\big\|Du\big\|\Big(\cup_{\tau\in[0,1]}\big(\partial K+\tau
t\vec k\big)\Big)\bigg)+\\
\int\limits_0^1\int\limits_{K+\tau t\vec
k}\Bigg\{\int\limits_0^1\nabla_a W\Big(s\bar u\big(x+(1-\tau)t\vec
k\big)+(1-s)\bar u(x-\tau t\vec k),\bar u(x-\tau t\vec
k)\Big)ds\Bigg\}\cdot d\Big[Du
(x)\cdot\vec k\Big]d\tau.
\end{multline}
On the other hand, by Theorem 3.108 and Remark 3.109 from \cite{amb}
we deduce that
\begin{equation}
\label{shrppppp8kkkkGG} \left\{
\begin{aligned}
\lim\limits_{\rho\to0^+}
\int_{0}^{1}
\bigg(\big|\bar u(x+\rho s\vec k)-\tilde u(x)\big|&+\big|\bar
u(x-\rho
s\vec k)-\tilde u(x)\big|\bigg)\,ds=0,\\
&\mathcal{H}^{N-1}\text{ a.e. in }\Omega\setminus  J_u\\
\lim\limits_{\rho\to0^+}
\int_{0}^{1}
\bigg(\big|\bar u(x+\rho s\vec k)-u^+(x)\big|&+\big|\bar u(x-\rho
s\vec k)-u^-(x)\big|\bigg)\,ds=0,\\
&\mathcal{H}^{N-1}\text{ a.e. in }\Big\{x\in J_u\,:\;\vec
k\cdot\vec\nu(x)>0\Big\}\\ \lim\limits_{\rho\to 0^+}
\int_{0}^{1}
\bigg(\big|\bar u(x+\rho s\vec k)-u^-(x)\big|&+\big|\bar u(x-\rho
s\vec k)-u^+(x)\big|\bigg)\,ds=0,\\
&\mathcal{H}^{N-1}\text{ a.e. in }\Big\{x\in J_u\,:\;\vec
k\cdot\vec\nu(x)<0\Big\}.
\end{aligned}
\right.
\end{equation}
Thus, since $\|Du\|(\partial K)=0$, by \er{fgyufghfghGHKKzzbvq}, the
first equation in \er{shrppppp8kkkkGG}, Dominated
Convergence and the properties $W(a,b)=W(b,a)$, $W(a,a)=0$
and $\nabla W(a,a)=0$ (since $W\geq 0$), we obtain
\begin{multline}\label{fgyufghfghjgghgjkhkkhhkggkjgmhkjjuyhghgGHKKzzbvqujuouj}
\lim_{t\to0^+}I_t=\lim_{t\to0^+}\frac{1}{t}\int_KW\Big(u(x+t\vec
k),u(x)\Big)dx=\\
\lim_{t\to0^+}\int\limits_0^1\int\limits_{J_u\cap
K}\Bigg\{\int\limits_0^1\nabla_a W\Big(s\bar u\big(x+(1-\tau)t\vec
k\big)+(1-s)\bar u(x-\tau t\vec k),\bar u(x-\tau t\vec
k)\Big)ds\Bigg\}\cdot d\Big[Du
(x)\cdot\vec k\Big]d\tau.
\end{multline}
Then by inserting the second two equations in \er{shrppppp8kkkkGG}
into \er{fgyufghfghjgghgjkhkkhhkggkjgmhkjjuyhghgGHKKzzbvqujuouj} and
using Dominated Convergence and
Theorem \ref{vtTh3} we deduce:
\begin{multline}\label{fgyufghfghjgghgjkhkkhhkggkjgmhkjjuyhghgGHKKzzbvqljjjj}
\lim_{t\to0^+}I_t=\\
\int\limits_{J_u\cap K}\int\limits_0^1\nabla_a
W\Big(su^+(x)+(1-s)u^-(x),u^-(x)\Big)\cdot\big(u^+(x)-u^-(x)\big)\Big(\max\big\{\vec
k\cdot\vec\nu(x),0\big\}\Big)dsd\mathcal{H}^{N-1}(x)+\\
\int\limits_{J_u\cap K}\int\limits_0^1\nabla_a
W\Big(su^-(x)+(1-s)u^+(x),u^+(x)\Big)\cdot\big(u^-(x)-u^+(x)\big)\Big(\max\big\{-\vec
k\cdot\vec\nu(x),0\big\}\Big)dsd\mathcal{H}^{N-1}(x).
\end{multline}
Then, using the Fundamental Theorem of Calculus in
\er{fgyufghfghjgghgjkhkkhhkggkjgmhkjjuyhghgGHKKzzbvqljjjj} gives
\begin{multline}\label{fgyufghfghjgghgjkhkkhhkggkjgmhkjjuyhghgGHKKzzbvq}
\lim_{t\to0^+}I_t= \int_{J_u\cap
K}W\Big(u^+(x),u^-(x)\Big)\Big(\max\big\{\vec
k\cdot\vec\nu(x),0\big\}\Big)d\mathcal{H}^{N-1}(x)\\+\int_{J_u\cap
K}W\Big(u^-(x),u^+(x)\Big)\Big(\max\big\{-\vec
k\cdot\vec\nu(x),0\big\}\Big)d\mathcal{H}^{N-1}(x)\\=\int_{J_u\cap
K}W\Big(u^+(x),u^-(x)\Big)\big|\vec
k\cdot\vec\nu(x)\big|d\mathcal{H}^{N-1}(x).
\end{multline}
The desired estimate \er{fgyufghfghjgghgjkhkkGHKKzzbvq} follows
immediately from
\er{fgyufghfghjgghgjkhkkhhkggkjgmhkjjuyhghgGHKKzzbvq} and
\er{fgyufghfghjgghgjkhkkGHGHKKzzbvq} is deduced from the particular
case $W(a,b)=|a-b|^q$. Moreover, for any compact set
$K\subset\subset\Omega$, we can choose
$\Omega_1\subset\subset\Omega$ such that $K\subset\subset\Omega_1$
and then for every small $t>0$ we clearly have
\begin{multline}\label{hgjhkjgfgjffzzbvq}
0\leq\frac{1}{t}\int_K\Big|u(x+t\vec
k)-u(x)\Big|^qdx\leq2^{q-1}\|u\|^{q-1}_{L^\infty(K)}\int_K\frac{1}{t}\Big|u(x+t\vec
k)-u(x)\Big|dx\\ \leq
2^{q-1}\|u\|^{q-1}_{L^\infty(K)}\|u\|_{BV(\Omega_1)}.
\end{multline}
Thus by dominated convergence we get
\begin{multline}\label{fgyufghfghjgghgjkhkkGHGHKKjjjjkjkkjjhjkzzbvq}
A_{u,q}\big(K\big)=\lim\limits_{t\to
0^+}\Bigg(\int_{{B_1(0)}}\int_K\frac{1}{t|z|}\Big|u(x+t
z)-u(x)\Big|^qdxdz\Bigg)\\=\Bigg(\int_{{B_1(0)}}\bigg(\int_{J_u\cap
K}\Big|u^+(x)-u^-(x)\Big|^q\bigg|\frac{z}{|z|}\cdot\vec\nu(x)\bigg|d\mathcal{H}^{N-1}(x)\bigg)dz\Bigg)=\\
\bigg(\int_{{B_1(0)}}\frac{|z_1|}{|z|}dz\bigg)\bigg(\int_{J_u\cap
K}\Big|u^+(x)-u^-(x)\Big|^qd\mathcal{H}^{N-1}(x)\bigg)\\=
\bigg(\frac{1}{N}\int_{S^{N-1}}|z_1|d\mathcal{H}^{N-1}(z)\bigg)\bigg(\int_{J_u\cap
K}\Big|u^+(x)-u^-(x)\Big|^qd\mathcal{H}^{N-1}(x)\bigg),
\end{multline}
and
\er{fgyufghfghjgghgjkhkkGHGHKKjjjjkjkkjzzbvq} follows.
\end{proof}
%
%
%
%
%
%
\begin{proof}[Proof of Theorem \ref{ghgghgghjjkjkzzbvq}]
Identities \er{fgyufghfghjgghgjkhkkGHGHKKjjjjkjkkjkmmlmjijilzzbvqKK}
and \er{fgyufghfghjgghgjkhkkGHGHKKjjjjkjkkjkmmlmjijiluuizzbvqKK}
follow from Proposition \ref{hgugghghhffhfhKKzzbvq}. For every $\vec k\in S^{N-1}$,  every open
$\Omega_1\subset\Omega$ such that $u\in BV(\Omega_1,\R^d)\cap
L^\infty(\Omega_1,\R^d)$, every $K\subset\subset\Omega_1$ and
$0<t<\dist(K,\R^N\setminus\Omega_1)$ we have
\begin{multline}\label{hgjhkjgfgjffkjlljjlkllkjjzzbvq}
0\leq\frac{1}{t}\int_K\Big|u(x+t\vec
k)-u(x)\Big|^qdx\leq2^{q-1}\|u\|^{q-1}_{L^\infty(K)}\int_K\frac{1}{t}\Big|u(x+t\vec
k)-u(x)\Big|dx\\ \leq
2^{q-1}\|u\|^{q-1}_{L^\infty(K)}\|u\|_{BV(\Omega_1)}.
\end{multline}
Therefore, we obtain $BV_{loc}(\Omega,\R^d)\cap
L^\infty_{loc}(\Omega,\R^d)\subset {BV}^q_{loc}(\Omega,\R^d)$.

 Finally, if $\Omega$ is
an open set with bounded Lipschitz boundary and  $u\in
BV(\Omega,\R^d)\cap L^\infty(\Omega,\R^d)$, then
we can extend the function $u(x)$ to all of $\R^N$ in such a way
that $u\in BV(\R^N,\R^d)\cap L^\infty(\R^N,\R^d)$ and $\|D
u\|(\partial\Omega)=0$. Next in the case of bounded $\O$
clearly, we have
\begin{equation}\label{hugyfgoubvq}
A_{u,q}(\Omega)\leq\hat A_{u,q}(\Omega)\leq A_{u,q}(\ov\Omega).
\end{equation}
Thus, since $\|D u\|(\partial\Omega)=0$, combining \er{hugyfgoubvq}
together with
\er{fgyufghfghjgghgjkhkkGHGHKKjjjjkjkkjkmmlmjijilzzbvqKK} and
\er{fgyufghfghjgghgjkhkkGHGHKKjjjjkjkkjkmmlmjijiluuizzbvqKK} yields
$A_{u,q}(\Omega)=\hat A_{u,q}(\Omega)=A_{u,q}(\ov\Omega)$, and in
particular, $u\in{BV}^q(\Omega,\R^d)$. On the hand, if $\O$ is
unbounded
consider a strictly increasing positive sequence
$R_n\uparrow\infty$, such that $\|D
u\|\big(\partial\Omega\cup\partial B_{R_n}(0)\big)=0$. Then,
similarly to \er{hgjhkjgfgjffkjlljjlkllkjjzzbvq}
we have
\begin{multline}\label{hugyfgoubvqjjjjj}
A_{u,q}(\Omega)\leq\hat A_{u,q}(\Omega)\leq
A_{u,q}\big(\ov\Omega\cap \ov B_{R_{n+2}}(0)\big)+
A_{u,q}\big(\R^N\setminus \ov B_{R_{n+1}}(0)\big)\\ \leq
A_{u,q}\big(\ov\Omega\cap \ov B_{R_{n+2}}(0)\big)+
\frac{1}{\mathcal{L}^N({B_1(0)})}B_{u,q}\big(\R^N\setminus \ov
B_{R_{n+1}}(0)\big)\\ \leq A_{u,q}\big(\ov\Omega\cap \ov
B_{R_{n+2}}(0)\big)+
\frac{2^{q-1}}{\mathcal{L}^N({B_1(0)})}\|u\|^{q-1}_{L^\infty(\R^N)}\|u\|_{BV(\R^N\setminus
\ov B_{R_{n}}(0))} .
\end{multline}
Thus letting $n$ tend to $\infty$ in \er{hugyfgoubvqjjjjj} and using
\er{fgyufghfghjgghgjkhkkGHGHKKjjjjkjkkjkmmlmjijilzzbvqKK} and
\er{fgyufghfghjgghgjkhkkGHGHKKjjjjkjkkjkmmlmjijiluuizzbvqKK} again
yields $$A_{u,q}(\Omega)=\hat
A_{u,q}(\Omega)=\lim_{n\to\infty}A_{u,q}\big(\ov\Omega\cap \ov
B_{R_{n+2}}(0)\big),$$ that completes the proof.
\end{proof}

The next Lemma contains the main ingredient of the proof of Theorem~\ref{huyhuyuughhjhhjjhhjjjhhjhjjhhhjzzbvq}.
\begin{lemma}\label{huyhuyuughhjhhjjhhjjjzzbvq}
For any open set $\Omega\subset\R^N$, $q>1$ and $u\in W^{\frac{1}{q},q}(\Omega,\R^d)$ we have $u\in
BV^q(\Omega,\R^d)$. Moreover,
\begin{equation}\label{fgyufghfghggjhjkkzzbvqkjkjjhg}
\bar A_{u,q}\big(\Omega\big)\leq
\int_\Omega\int_\Omega\frac{\big|u(x)-u(y)\big|^q}{|x-y|^{N+1}}dydx
\end{equation}
and
\begin{equation}\label{fgyufghfghggjhjkkzzbvq}
\hat A_{u,q}\big(\Omega\big)=0.
\end{equation}
\end{lemma}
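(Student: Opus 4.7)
The plan is to observe that both inequalities reduce to a pointwise comparison between the integrand defining $\bar A_{u,q}$ and the Gagliardo kernel of $W^{1/q,q}$, followed in the second case by a routine dominated convergence argument.

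First I would prove the bound \eqref{fgyufghfghggjhjkkzzbvqkjkjjhg}. The key observation is that $W^{1/q,q}(\Omega,\R^d)$ is precisely the space of $u\in L^q(\Omega,\R^d)$ for which the Gagliardo seminorm
$$[u]_{W^{1/q,q}}^q:=\int_\Omega\int_\Omega\frac{|u(x)-u(y)|^q}{|x-y|^{N+1}}\,dy\,dx$$
is finite (since $sq=1$ yields the exponent $N+sq=N+1$). Now for every $\e\in(0,1)$ and every $y\in B_\e(x)$ one has $|y-x|<\e$, so $\e^N>|y-x|^N$ and consequently
$$\frac{1}{\e^N}\frac{|u(y)-u(x)|^q}{|y-x|}\leq\frac{|u(y)-u(x)|^q}{|y-x|^{N+1}}.$$
Integrating over $\{(x,y)\in\Omega\times\Omega:\, y\in B_\e(x)\}$ and then enlarging to $\Omega\times\Omega$ yields
$$\int_\Omega\int_{B_\e(x)\cap\Omega}\frac{1}{\e^N}\frac{|u(y)-u(x)|^q}{|y-x|}\,dy\,dx\leq\int_\Omega\int_\Omega\frac{|u(y)-u(x)|^q}{|y-x|^{N+1}}\,dy\,dx.$$
Taking the supremum over $\e\in(0,1)$ gives \eqref{fgyufghfghggjhjkkzzbvqkjkjjhg}, and hence $u\in BV^q(\Omega,\R^d)$.

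For \eqref{fgyufghfghggjhjkkzzbvq}, I would apply the dominated convergence theorem on the product space $\Omega\times\Omega$ to the family of functions
$$f_\e(x,y):=\frac{1}{\e^N}\frac{|u(y)-u(x)|^q}{|y-x|}\,\chi_{B_\e(x)\cap\Omega}(y),\qquad\e\in(0,1).$$
By the pointwise inequality established above, each $f_\e$ is dominated by the integrable function $(x,y)\mapsto |u(y)-u(x)|^q/|y-x|^{N+1}$ on $\Omega\times\Omega$. Moreover, for every fixed pair $(x,y)$ with $x\neq y$ one has $f_\e(x,y)=0$ once $\e<|y-x|$, so $f_\e(x,y)\to 0$ pointwise almost everywhere as $\e\to 0^+$. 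Dominated convergence then yields
$$\lim_{\e\to 0^+}\int_\Omega\int_{B_\e(x)\cap\Omega}\frac{1}{\e^N}\frac{|u(y)-u(x)|^q}{|y-x|}\,dy\,dx=0,$$
which is precisely $\hat A_{u,q}(\Omega)=0$.

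There is essentially no serious obstacle: the proof is a one-line kernel comparison followed by a dominated convergence argument, and it does not exploit any additional structure of $W^{1/q,q}$ beyond the finiteness of its Gagliardo seminorm. The only point worth flagging is that the argument works uniformly for any $q\geq 1$ in producing \eqref{fgyufghfghggjhjkkzzbvqkjkjjhg}, while the conclusion $\hat A_{u,q}(\Omega)=0$ is stated for $q>1$, consistently with the rest of the lemma's hypothesis.
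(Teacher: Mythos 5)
Your proof is correct and follows essentially the same path as the paper: the pointwise kernel comparison $\frac{1}{\e^N}\frac{|u(y)-u(x)|^q}{|y-x|}\leq\frac{|u(y)-u(x)|^q}{|y-x|^{N+1}}$ on $\{|y-x|<\e\}$ gives \eqref{fgyufghfghggjhjkkzzbvqkjkjjhg}, and dominated convergence (whether applied, as you do, directly to $f_\e$, or, as in the paper, to the Gagliardo kernel restricted to the $\e$-ball) gives \eqref{fgyufghfghggjhjkkzzbvq}. These are the same argument in trivially different packaging.
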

\begin{proof}
For every $\e\in(0,1)$ we have
\begin{multline}\label{fgyufghfghjgghgjkhkkGHGHKKokuhhhugugzzkhhbvq}
\infty>T=
\int_\Omega\bigg(\int_\Omega\frac{\big|u(x)-u(y)\big|^q}{|x-y|^{N+1}}dy\bigg)dx\geq\int_\Omega\bigg(\int_{B_\e(x)\cap\Omega}\frac{\big|u(x)-u(y)\big|^q}{|x-y|^{N+1}}dy\bigg)dx\\
\geq \int_\Omega\bigg(\int_{B_\e(x)\cap\Omega}\frac{1}{\e^N}
\,\frac{\big|u(x)-u(y)\big|^q}{|x-y|}dy\bigg)dx.
\end{multline}
In particular, we deduce \er{fgyufghfghggjhjkkzzbvqkjkjjhg}. Next,
by \er{fgyufghfghjgghgjkhkkGHGHKKokuhhhugugzzkhhbvq} we infer
\begin{multline}\label{fgyufghfghjgghgjkhkkGHGHKKokuhhhugugjhkzzggbvq}
\limsup_{\e\to0^+}\Bigg(\int_\Omega\bigg(\int_{B_\e(x)\cap\Omega}\frac{1}{\e^N}
\,\frac{\big|u(x)-u(y)\big|^q}{|x-y|}dy\bigg)dx\Bigg) \leq\\
\limsup_{\e\to0^+}\Bigg(\int_\Omega\bigg(\int_{B_\e(x)\cap\Omega}\frac{\big|u(x)-u(y)\big|^q}{|x-y|^{N+1}}dy\bigg)dx\Bigg)
\leq
\int_\Omega\bigg(\int_\Omega\frac{\big|u(x)-u(y)\big|^q}{|x-y|^{N+1}}dy\bigg)dx<\infty.
\end{multline}
On the other hand, dominated convergence implies that
$$\limsup_{\e\to0^+}\Bigg(\int_\Omega\bigg(\int_{B_\e(x)\cap\Omega}\frac{\big|u(x)-u(y)\big|^q}{|x-y|^{N+1}}dy\bigg)dx\Bigg)=0.$$
Plugging the above in
\er{fgyufghfghjgghgjkhkkGHGHKKokuhhhugugjhkzzggbvq} yields
\begin{equation}\label{fgyufghfghjgghgjkhkkGHGHKKokuhhhugugjhkhjgjjzzggbvq}
\limsup_{\e\to0^+}\Bigg(\int_\Omega\bigg(\int_{B_\e(x)\cap\Omega}\frac{1}{\e^N}
\,\frac{\big|u(x)-u(y)\big|^q}{|x-y|}dy\bigg)dx\Bigg)=0,
\end{equation}
and \er{fgyufghfghggjhjkkzzbvq} follows.
\end{proof}

We recall below the definitions of the spaces $B$ and $B_0$ from
\cite{BBM3}.
\begin{definition}\label{gvyfhgfhfffbvq}
For every $x=(x_1,\ldots,x_N)\in\R^N$ and every $\e>0$ consider the
$\e$-cube:
\begin{equation}\label{jghjghjgghghghghbvq}
Q_\e(x):=\Big\{z=(z_1,\ldots,z_N)\in\R^N\,:\;|z_j-x_j|<\frac{\e}{2}\Big\}.
\end{equation}
Then, for any open set $\Omega\subset\R^N$ and any $\e>0$ denote by
$\mathcal{R}_\e(\Omega)$ the set of all collections of disjoint
$\e$-cubes $\big\{Q_\e(x_j)\big\}_{j=1}^{m}$ contained in $\Omega$
with $m\in\Big[0,\frac{1}{\e^{N-1}}\Big]$, such that
$\bigcup_{j=1}^{m} Q_\e(x_j)\subseteq\Omega$ and $Q_\e(x_k)\cap
Q_\e(x_j)=\emptyset$ whenever $k\neq j$. Furthermore, for every
small $\e>0$ and every $u\in L^1_{loc}(\Omega,\R^d)$ define
\begin{align}\label{jghjghjgghghghghjhkhhbvq}
&[u]_{\e}(\Omega):=
\sup\Bigg\{\sum_{j=1}^{m}\bigg(\frac{\e^{N-1}}{\big(\mathcal{L}^N(Q_\e(x_j))\big)^2}
\!\!\iint_{(Q_\e(x_j))^2}\!\Big|u(y)-u(x)\Big|dydx\bigg)\,:\,\{Q_\e(x_j)\}_{j=1}^{m}\in
\mathcal{R}_\e(\Omega)\Bigg\},\\
\label{jghjghjgghghghghjhkhhhihbvquiyuyu}
&|u|_{B(\Omega,\R^d)}:=\sup\limits_{\e\in(0,1)}[u]_{\e}(\Omega),\\
\intertext{and}
\label{jghjghjgghghghghjhkhhhihbvq}
&[u](\Omega):=\limsup\limits_{\e\to0^+}[u]_{\e}(\Omega).
\end{align}
Define the spaces
\begin{equation}\label{jghjghjgghghghghjhkhhhihiiubvq}
\begin{aligned}
B(\Omega,\R^d)&:=\big\{u\in
L^1(\Omega,\R^d)\,:\;|u|_{B(\Omega,\R^d)}<\infty\big\}=\big\{u\in
L^1(\Omega,\R^d)\,:\;[u](\Omega)<\infty\big\},\\
B_0(\Omega,\R^d)&:=\big\{u\in
L^1(\Omega,\R^d)\,:\;[u](\Omega)=0\big\}\,\subset\,B(\Omega,\R^d).
\end{aligned}
\end{equation}
Then,
$B(\Omega,\R^d)$ is a normed linear space with the norm
\begin{equation}\label{hgghggghkghghhgjjjj}
\|u\|_{B(\Omega,\R^d)}:=|u|_{B(\Omega,\R^d)}+\|u\|_{L^1(\Omega,\R^d)},
\end{equation}
and $B_0(\Omega,\R^d)$ is a closed subspace of
$B(\Omega,\R^d)$.
\end{definition}
\begin{lemma}\label{guytuttbvq}
For any open set $\Omega\subset\R^N$, $q\geq 1$, $u\in
L^q(\Omega,\R^d)$, $\e>0$, an integer
$m\in\Big[0,\frac{1}{\e^{N-1}}\Big]$ and arbitrary $m$ points
$\big\{x_j\big\}_{j=1}^{m}\subset\Omega$, such that
$\bigcup_{j=1}^{m} Q_\e(x_j)\subset\Omega$ and $Q_\e(x_k)\cap
Q_\e(x_j)=\emptyset$ for $k\neq j$, we have
\begin{multline}\label{fgyufghfghjgghgjkhkkhhkggkjgmhkjjuyhghgGHKKvbvbvvzzjjjkkpkbvq}
\sum_{j=1}^{m}\e^{N-1}\Bigg(\frac{1}{\big(\mathcal{L}^N(Q_\e(x_j))\big)^2}\int_{Q_\e(x_j)}\int_{Q_\e(x_j)}\Big|u(y)-u(x)\Big|dydx\Bigg)\\
\leq
N^{\frac{N+1}{2q}}\Bigg(\int\limits_{\Omega}\int\limits_{B_{\e'}(x)\cap\Omega}\frac{1}{(\e')^{N}}\,\frac{\big|u(y)-u(x)\big|^q}{|y-x|}
dydx\Bigg)^{\frac{1}{q}},
\end{multline}
where $\e':=\e\sqrt{N}$.
\end{lemma}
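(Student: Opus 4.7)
The plan is to apply H\"older's inequality twice: first inside each cube $Q_\e(x_j)$, to convert the unweighted $L^1$ difference integral into the $L^q$ weighted one appearing on the right, and second across the finite sum over $j$, using the hypothesis $m\leq \e^{-(N-1)}$ to absorb the resulting factor $m^{(q-1)/q}$. The geometric input that makes both steps work is the elementary observation $\mathrm{diam}\,Q_\e(x_j)=\e\sqrt N=\e'$, so that $Q_\e(x_j)\subset B_{\e'}(x)\cap\Omega$ for every $x\in Q_\e(x_j)$ and $|y-x|\leq \e'$ throughout $Q_\e(x_j)\times Q_\e(x_j)$.

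For the inner step I write
\[
|u(y)-u(x)|\;=\;\Bigl(\tfrac{|u(y)-u(x)|^q}{|y-x|}\Bigr)^{1/q}\cdot|y-x|^{1/q}
\]
and apply H\"older with conjugate exponents $q$ and $q/(q-1)$ (read as $\infty$ when $q=1$, in which case the step just pulls out $\sup|y-x|\leq \e'$), giving
\[
\int_{Q_\e(x_j)\times Q_\e(x_j)}|u(y)-u(x)|\,dy\,dx\;\leq\;A_j^{1/q}\,(\e')^{1/q}\,\e^{2N(q-1)/q},
\]
with $A_j:=\int_{Q_\e(x_j)\times Q_\e(x_j)}|u(y)-u(x)|^q/|y-x|\,dy\,dx$ and where the second factor uses only the crude bound $|y-x|\leq \e'$. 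Multiplying by $\e^{-(N+1)}$, summing, and applying H\"older once more in the form $\sum_j A_j^{1/q}\leq m^{(q-1)/q}(\sum_j A_j)^{1/q}$, then using the disjointness of the cubes together with $Q_\e(x_j)\subset B_{\e'}(x)\cap\Omega$ to conclude $\sum_j A_j\leq \int_\Omega\int_{B_{\e'}(x)\cap\Omega}|u(y)-u(x)|^q/|y-x|\,dy\,dx$, I obtain the right-hand side of \er{fgyufghfghjgghgjkhkkhhkggkjgmhkjjuyhghgGHKKvbvbvvzzjjjkkpkbvq} up to an explicit multiplicative constant depending on $\e$, $m$, and $\sqrt N$.

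The remaining task is exponent bookkeeping, and this is the main (essentially the only) obstacle: matching against the stated constant reduces to the inequality $(\e^{N-1}m)^{(q-1)/q}\leq 1$, which is \emph{exactly} the hypothesis $m\leq \e^{-(N-1)}$, while the accumulated powers of $\sqrt N$ coming from $\e'=\e\sqrt N$ in both H\"older steps and in the normalisation factor $(\e')^{-N/q}$ assemble into precisely $N^{(N+1)/(2q)}$. In particular no slack is lost anywhere, which explains why the lemma is stated at the sharp threshold $m\leq 1/\e^{N-1}$ for this cancellation.
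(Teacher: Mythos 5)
Your proposal is correct and follows essentially the same route as the paper: Hölder inside each cube using the geometric bound $|y-x|\le\operatorname{diam}Q_\e(x_j)=\e\sqrt N$, then Hölder on the finite sum with $m\le\e^{-(N-1)}$ to absorb $m^{(q-1)/q}$, then disjointness and the inclusion $Q_\e(x_j)\subset B_{\e'}(x)\cap\Omega$; the only cosmetic difference is that you factor $|u(y)-u(x)|$ through $|y-x|^{1/q}$ before applying Hölder, whereas the paper first applies the normalized Jensen form and then bounds $|y-x|$ afterward, but the two give identical constants. I checked the exponent bookkeeping you deferred and it does assemble to $(\e^{N-1}m)^{(q-1)/q}N^{(N+1)/(2q)}$ exactly as you claim.
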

\begin{proof}
By H\"{o}lder inequality, we have
\begin{multline}\label{fgyufghfghjgghgjkhkkhhkggkjgmhkjjuyhghgGHKKvbvbvvzzjjjkkpkhjhbvq}
\Bigg(\frac{1}{\big(\mathcal{L}^N(Q_\e(x_j))\big)^2}\int_{Q_\e(x_j)}\int_{Q_\e(x_j)}\Big|u(y)-u(x)\Big|dydx\Bigg)
\leq\\
\Bigg(\frac{1}{\big(\mathcal{L}^N(Q_\e(x_j))\big)^2}\int_{Q_\e(x_j)}\int_{Q_\e(x_j)}\Big|u(y)-u(x)\Big|^q
dydx\Bigg)^{\frac{1}{q}}=\\
\Bigg(\frac{1}{\e^{2N}}\int_{Q_\e(x_j)}\int_{Q_\e(x_j)}\Big|u(y)-u(x)\Big|^q
dydx\Bigg)^{\frac{1}{q}}\leq\Bigg(\frac{\sqrt{N}}{\e^{N-1}}\int_{Q_\e(x_j)}\int_{Q_\e(x_j)}\frac{1}{\e^{N}}\frac{\big|u(y)-u(x)\big|^q}{|y-x|}
dydx\Bigg)^{\frac{1}{q}}.
\end{multline}
On the other hand, by the H\"{o}lder's inequality (on finite sums)
we have
%
%
%
%
%
%
\begin{equation}\label{jmvnvnbccbvhjhjhhjjkhgjgGHKKjhhjzzijhjjhiuybvq}
\bigg(\sum_{j=1}^{m}s^{\frac{1}{q}}_j\bigg)\leq
m^{\frac{q-1}{q}}\bigg(\sum_{j=1}^{m}s_j\bigg)^{\frac{1}{q}}.
\end{equation}
Therefore, by
\er{fgyufghfghjgghgjkhkkhhkggkjgmhkjjuyhghgGHKKvbvbvvzzjjjkkpkhjhbvq}
and \er{jmvnvnbccbvhjhjhhjjkhgjgGHKKjhhjzzijhjjhiuybvq} we have
\begin{multline}\label{fgyufghfghjgghgjkhkkhhkggkjgmhkjjuyhghgGHKKvbvbvvzzjjjkkpkhjhjjhbvq}
\sum_{j=1}^{m}\e^{N-1}\Bigg(\frac{1}{\big(\mathcal{L}^N(Q_\e(x_j))\big)^2}\int_{Q_\e(x_j)}\int_{Q_\e(x_j)}\Big|u(y)-u(x)\Big|dydx\Bigg)
\\ \leq\sum_{j=1}^{m}\e^{N-1}\Bigg(\frac{\sqrt{N}}{\e^{N-1}}\int_{Q_\e(x_j)}\int_{Q_\e(x_j)}\frac{1}{\e^{N}}\frac{\big|u(y)-u(x)\big|^q}{|y-x|}
dydx\Bigg)^{\frac{1}{q}}\\
\leq\e^{N-1}m^{\frac{q-1}{q}}\Bigg(\sum_{j=1}^{m}\frac{\sqrt{N}}{\e^{N-1}}\int_{Q_\e(x_j)}\int_{Q_\e(x_j)}\frac{1}{\e^{N}}\frac{\big|u(y)-u(x)\big|^q}{|y-x|}
dydx\Bigg)^{\frac{1}{q}}\\=
\Big(\e^{N-1}m\Big)^{\frac{q-1}{q}}N^{\frac{1}{2q}}\Bigg(\sum_{j=1}^{m}\int_{Q_\e(x_j)}\int_{Q_\e(x_j)}\frac{1}{\e^{N}}\frac{\big|u(y)-u(x)\big|^q}{|y-x|}
dydx\Bigg)^{\frac{1}{q}}.
\end{multline}
By
\er{fgyufghfghjgghgjkhkkhhkggkjgmhkjjuyhghgGHKKvbvbvvzzjjjkkpkhjhjjhbvq}
and our assumption  $m\leq\frac{1}{\e^{N-1}}$ it follows that
\begin{multline}\label{fgyufghfghjgghgjkhkkhhkggkjgmhkjjuyhghgGHKKvbvbvvzzjjjkkpkhjhjjhhjhkbvq}
\sum_{j=1}^{m}\e^{N-1}\Bigg(\frac{1}{\big(\mathcal{L}^N(Q_\e(x_j))\big)^2}\int_{Q_\e(x_j)}\int_{Q_\e(x_j)}\Big|u(y)-u(x)\Big|dydx\Bigg)
\\ \leq
N^{\frac{1}{2q}}\Bigg(\sum_{j=1}^{m}\int_{Q_\e(x_j)}\int_{Q_\e(x_j)}\frac{1}{\e^{N}}\frac{\big|u(y)-u(x)\big|^q}{|y-x|}
dydx\Bigg)^{\frac{1}{q}}.
\end{multline}
Since for every $x\in Q_\e(x_j)$ we have
$Q_\e(x_j)\subset B_{(\e\sqrt{N})}(x)\cap\Omega$, we get from
\er{fgyufghfghjgghgjkhkkhhkggkjgmhkjjuyhghgGHKKvbvbvvzzjjjkkpkhjhjjhhjhkbvq}
that
\begin{multline}\label{fgyufghfghjgghgjkhkkhhkggkjgmhkjjuyhghgGHKKvbvbvvzzjjjkkpkhjhjjhhjhkuuiuibvq}
\sum_{j=1}^{m}\e^{N-1}\Bigg(\frac{1}{\big(\mathcal{L}^N(Q_\e(x_j))\big)^2}\int_{Q_\e(x_j)}\int_{Q_\e(x_j)}\Big|u(y)-u(x)\Big|dydx\Bigg)
\\ \leq
N^{\frac{1}{2q}}\Bigg(\sum\limits_{j=1}^{m}\int\limits_{Q_\e(x_j)}\bigg(\int\limits_{B_{(\e\sqrt{N})}(x)\cap\Omega}\frac{1}{\e^{N}}\frac{\big|u(y)-u(x)\big|^q}{|y-x|}
dy\bigg)dx\Bigg)^{\frac{1}{q}}=\\
N^{\frac{N+1}{2q}}\Bigg(\sum\limits_{j=1}^{m}\int\limits_{Q_\e(x_j)}\bigg(\int\limits_{B_{(\e\sqrt{N})}(x)\cap\Omega}\frac{1}{(\e\sqrt{N})^{N}}\,\frac{\big|u(y)-u(x)\big|^q}{|y-x|}
dy\bigg)dx\Bigg)^{\frac{1}{q}}.
\end{multline}
Since $\bigcup_{j=1}^{m} Q_\e(x_j)\subset\Omega$ and
$Q_\e(x_k)\cap Q_\e(x_j)$ whenever $k\neq j$, by
\er{fgyufghfghjgghgjkhkkhhkggkjgmhkjjuyhghgGHKKvbvbvvzzjjjkkpkhjhjjhhjhkuuiuibvq}
we finally obtain
\begin{multline}\label{fgyufghfghjgghgjkhkkhhkggkjgmhkjjuyhghgGHKKvbvbvvzzjjjkkpkhjhjjhhjhkuuiuiyuyyubvq}
\sum_{j=1}^{m}\e^{N-1}\Bigg(\frac{1}{\big(\mathcal{L}^N(Q_\e(x_j))\big)^2}\int_{Q_\e(x_j)}\int_{Q_\e(x_j)}\Big|u(y)-u(x)\Big|dydx\Bigg)
\\ \leq
N^{\frac{N+1}{2q}}\Bigg(\int\limits_{\bigcup_{j=1}^{m}Q_\e(x_j)}\bigg(\int\limits_{B_{(\e\sqrt{N})}(x)\cap\Omega}\frac{1}{(\e\sqrt{N})^{N}}\,\frac{\big|u(y)-u(x)\big|^q}{|y-x|}
dy\bigg)dx\Bigg)^{\frac{1}{q}}\\ \leq
N^{\frac{N+1}{2q}}\Bigg(\int\limits_{\Omega}\int\limits_{B_{(\e\sqrt{N})}(x)\cap\Omega}\frac{1}{(\e\sqrt{N})^{N}}\,\frac{\big|u(y)-u(x)\big|^q}{|y-x|}
dydx\Bigg)^{\frac{1}{q}}.
\end{multline}
\end{proof}
From the above we can now deduce the main results about $BV^q$-spaces
as stated in the Introduction.
\begin{proof}[Proof of Proposition \ref{ghgghgghzzbvq}]
Follows from Lemma \ref{hjhhjKKzzbvq}.
\end{proof}
\begin{proof}[Proof of Theorem \ref{huyhuyuughhjhhjjhhjjjhhjhjjhhhjzzbvq}]
For $q=1$ it is well known. On the other hand, for $q>1$ the results
follow from Lemma~\ref{huyhuyuughhjhhjjhhjjjzzbvq}.
\end{proof}
\begin{proof}[Proof of Theorem \ref{bhugggyffyfbvq}]
Follows from Lemma~\ref{guytuttbvq} and Definition~\ref{gvyfhgfhfffbvq}.
\end{proof}

\section{An application to Aviles-Giga type energies: proof of Theorem
\ref{hgughgfhfzzbvq}}\label{AVGG}
 Th main ingredient needed for the proof of
 Theorem~\ref{hgughgfhfzzbvq} is given by the next Lemma.
\begin{lemma}\label{hgughgfhfzzbvqaa}
Let $\Omega,\Omega_0\subset\R^N$ be two open sets such that
$\Omega_0\subset\subset\Omega$.
Let $q>1$ and  $\psi\in W^{1,\infty}_{loc}(\Omega,\R)$ be such that
$|\nabla\psi(x)|=1$ for a.e. $x\in\Omega$ and $\nabla\psi(x)\in
BV^q_{loc}(\Omega,\R^N)$. For $\eta\in C^\infty_c(\R^N,[0,\infty))$
 satisfying  $\supp\eta\subset \ov B_1(0)$ and
$\int_{\R^N}\eta(z)dz=1$, every $x\in\Omega$ and every
$0<\e<\dist(x,\partial\Omega)$ define
\begin{equation}\label{jmvnvnbccbvhjhjhhjjkhgjgGHKKjhhjzzbvqaa}
\psi_\e(x):=\frac{1}{\e^N}\int_{\R^N}\eta\Big(\frac{y-x}{\e}\Big)\psi(y)dy=\int_{\R^N}\eta(z)\psi(x+\e
z)dz.
\end{equation}
Then,
\begin{equation}\label{fgyufghfghjgghgjkhkkhhkggkjgmhkjjuyhghgGHKKvbvbvvhughghojjlkjjjlouokjjzzbvqaajjkljk}
\limsup_{\e\to
0^+}\int_{\Omega_0}\e^{q-1}\big|\nabla^2\psi_\e(x)\big|^qdx\leq
\bigg(\int_{\R^N}|z|^{\frac{1}{q-1}}\big|\nabla\eta(z)\big|^{\frac{q}{q-1}}dz\bigg)^{q-1}A_{\nabla\psi,q}(\ov\Omega_0).
\end{equation}
Moreover, if $q\geq 2$ then
\begin{equation}\label{fgyufghfghjgghgjkhkkhhkggkjgmhkjjuyhghgGHKKvbvbvvhughghojjlkjjjlouijljkjklhhjojuihhhjkzzbvqaakkgfhpll}
\limsup_{\e\to
0^+}\int_{\Omega_0}\frac{1}{\e}\Big(1-\big|\nabla\psi_\e(x)\big|^2\Big)^{\frac{q}{2}}dx\leq
\bigg(\int_{\R^N}|z|^{\frac{2}{q-2}}\big|\eta(z)\big|^{\frac{q}{q-2}}dz\bigg)^{\frac{q-2}{2}}A_{\nabla\psi,q}(\ov\Omega_0).
\end{equation}
\end{lemma}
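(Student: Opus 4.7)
The plan is to derive both bounds from pointwise estimates on $\psi_\e$ that, after a single application of H\"older's inequality, reduce to the basic weighted difference quotient $\e^{-1}\int_{B_1(0)}|z|^{-1}|\nabla\psi(x+\e z)-\nabla\psi(x)|^q\,dz$; integrating this in $x\in\Omega_0$ and taking the limsup as $\e\to 0^+$ produces exactly $A_{\nabla\psi,q}(\ov\Omega_0)$ by Definition~\ref{gjghghghjgghGHKKzzbvqKK}. Throughout I take $0<\e<\dist(\Omega_0,\partial\Omega)$ so that $\psi_\e$ is smooth on $\Omega_0$ and the shifts $x+\e z$ with $x\in\Omega_0$, $z\in B_1(0)$ all lie in $\Omega$.

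For the Hessian bound \er{fgyufghfghjgghgjkhkkhhkggkjgmhkjjuyhghgGHKKvbvbvvhughghojjlkjjjlouokjjzzbvqaajjkljk}, I would differentiate \er{jmvnvnbccbvhjhjhhjjkhgjgGHKKjhhjzzbvqaa} twice, transferring one derivative onto $\eta$, to get $\partial_j\partial_i\psi_\e(x)=-\e^{-1}\int(\partial_j\eta)(z)\partial_i\psi(x+\e z)dz$. Since $\int\nabla\eta(z)dz=0$ I can subtract $\partial_i\psi(x)$ inside the integrand, producing $|\nabla^2\psi_\e(x)|\leq\e^{-1}\int|\nabla\eta(z)||\nabla\psi(x+\e z)-\nabla\psi(x)|dz$. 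Splitting the integrand as $\bigl(|z|^{1/q}|\nabla\eta(z)|\bigr)\cdot\bigl(|z|^{-1/q}|\nabla\psi(x+\e z)-\nabla\psi(x)|\bigr)$ and applying H\"older with exponents $q/(q-1)$ and $q$, then raising to the $q$-th power and multiplying by $\e^{q-1}$, yields
\begin{equation*}
\e^{q-1}|\nabla^2\psi_\e(x)|^q\leq\bigg(\int_{\R^N}|z|^{\frac{1}{q-1}}|\nabla\eta(z)|^{\frac{q}{q-1}}dz\bigg)^{q-1}\cdot\frac{1}{\e}\int_{B_1(0)}\frac{|\nabla\psi(x+\e z)-\nabla\psi(x)|^q}{|z|}dz.
\end{equation*}
Integrating over $\Omega_0$ and taking $\limsup_{\e\to 0^+}$ then gives \er{fgyufghfghjgghgjkhkkhhkggkjgmhkjjuyhghgGHKKvbvbvvhughghojjlkjjjlouokjjzzbvqaajjkljk}.

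For the eikonal defect \er{fgyufghfghjgghgjkhkkhhkggkjgmhkjjuyhghgGHKKvbvbvvhughghojjlkjjjlouijljkjklhhjojuihhhjkzzbvqaakkgfhpll} the key pointwise identity, valid for a.e.\ $x\in\Omega_0$, is
\begin{equation*}
1-|\nabla\psi_\e(x)|^2\leq\int_{\R^N}\eta(z)\,\big|\nabla\psi(x+\e z)-\nabla\psi(x)\big|^2 dz.
\end{equation*}
It follows by setting $a=\nabla\psi(x)$, $b=\nabla\psi_\e(x)$: since $|a|=1$, the elementary computation $2(1-a\cdot b)-(1-|b|^2)=|a-b|^2\geq 0$ gives $1-|b|^2\leq 2(1-a\cdot b)$, and using $|\nabla\psi(x+\e z)|^2=1$ for a.e.\ $z$ together with $\int\eta=1$ one verifies that $2(1-a\cdot b)$ coincides exactly with $\int\eta(z)|\nabla\psi(x+\e z)-\nabla\psi(x)|^2dz$. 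Raising to the $q/2$-th power (with $q\geq 2$) and applying H\"older with exponents $q/(q-2)$ and $q/2$ to the splitting $\eta(z)|\cdot|^2=\bigl(\eta(z)|z|^{2/q}\bigr)\cdot\bigl(|z|^{-2/q}|\cdot|^2\bigr)$ produces
\begin{equation*}
\big(1-|\nabla\psi_\e(x)|^2\big)^{q/2}\leq\bigg(\int_{\R^N}|z|^{\frac{2}{q-2}}\eta(z)^{\frac{q}{q-2}}dz\bigg)^{\frac{q-2}{2}}\int_{B_1(0)}\frac{|\nabla\psi(x+\e z)-\nabla\psi(x)|^q}{|z|}dz
\end{equation*}
(the borderline case $q=2$ is immediate), after which dividing by $\e$, integrating over $\Omega_0$ and taking the limsup finishes the argument. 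The only nontrivial step is the pointwise identity above: it extracts the full benefit of the eikonal constraint $|\nabla\psi|=1$, whereas the naive route through the symmetric double-integral representation $1-|\nabla\psi_\e|^2=\frac{1}{2}\int\!\!\int\eta(z)\eta(w)|\nabla\psi(x+\e z)-\nabla\psi(x+\e w)|^2dzdw$ followed by the triangle inequality costs a spurious factor of $2$ and destroys the sharp constant in \er{fgyufghfghjgghgjkhkkhhkggkjgmhkjjuyhghgGHKKvbvbvvhughghojjlkjjjlouijljkjklhhjojuihhhjkzzbvqaakkgfhpll}.
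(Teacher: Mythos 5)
Your proposal is correct and follows essentially the same strategy as the paper: transfer one derivative onto $\eta$, use $\int\nabla\eta=0$ to insert $\nabla\psi(x)$, apply H\"older with the $|z|^{\pm 1/q}$ (respectively $|z|^{\pm 2/q}$) splitting, and reduce everything to the quantity $\frac{1}{\e}\int_{B_1(0)}|z|^{-1}|\nabla\psi(x+\e z)-\nabla\psi(x)|^q\,dz$ whose $\limsup$ over $\Omega_0$ is $A_{\nabla\psi,q}(\ov\Omega_0)$. The only (cosmetic) difference is in how you reach the pointwise bound $1-|\nabla\psi_\e(x)|^2\le\int\eta(z)|\nabla\psi(x+\e z)-\nabla\psi(x)|^2dz$: you observe that $2(1-a\cdot b)-(1-|b|^2)=|a-b|^2\ge 0$ with $a=\nabla\psi(x)$, $|a|=1$, and then compute $2(1-a\cdot b)=\int\eta|\nabla\psi(x+\e z)-a|^2dz$ using the eikonal constraint; the paper instead writes $1-|\nabla\psi_\e|^2$ as the variance
\begin{equation*}
\int\eta(z)\big|\nabla\psi(x+\e z)-\nabla\psi(x)\big|^2dz-\bigg|\int\eta(z)\big(\nabla\psi(x+\e z)-\nabla\psi(x)\big)dz\bigg|^2
\end{equation*}
and drops the squared term. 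These are two algebraic presentations of the same identity, so the proofs are equivalent step for step.
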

\begin{proof}
For every $x\in\Omega_0$ and small enough $\e>0$ we have
\begin{equation}\label{jmvnvnbccbvhjhjhhjjkhgjgGHKKjhhjuyugzzbvqaa}
\nabla\psi_\e(x):=\frac{1}{\e^N}\int_{\R^N}\eta\Big(\frac{y-x}{\e}\Big)\nabla\psi(y)dy=\int_{\R^N}\eta(z)\nabla\psi(x+\e
z)dz,
\end{equation}
and
\begin{equation}\label{jmvnvnbccbvhjhjhhjjkhgjgGHKKjhhjuyuguyutuyzzbvqaa}
\e\nabla^2\psi_\e(x):=-\frac{1}{\e^N}\int_{\R^N}\nabla\eta\Big(\frac{y-x}{\e}\Big)\otimes\nabla\psi(y)dy=-\int_{\R^N}\nabla\eta(z)\otimes\nabla\psi(x+\e
z)dz.
\end{equation}
By \er{jmvnvnbccbvhjhjhhjjkhgjgGHKKjhhjuyuguyutuyzzbvqaa},
\begin{multline}\label{fgyufghfghjgghgjkhkkhhkggkjgmhkjjuyhghgGHKKvbvbvvhughghojjlkjjjlouzzbvqaa}
\int_{\Omega_0}\e^{q-1}\big|\nabla^2\psi_\e(x)\big|^qdx=\int_{\Omega_0}\frac{1}{\e}\big|\e\nabla^2\psi_\e(x)\big|^qdx
=\int_{\Omega_0}\frac{1}{\e}\Bigg|\int_{\R^N}\nabla\eta(z)\otimes\nabla\psi(x+\e
z)dz\Bigg|^qdx\\
=\int_{\Omega_0}\frac{1}{\e}\Bigg|\int_{\R^N}\nabla\eta(z)\otimes\Big(\nabla\psi(x+\e
z)-\nabla\psi(x)\Big)dz\Bigg|^qdx.
\end{multline}
From
\er{fgyufghfghjgghgjkhkkhhkggkjgmhkjjuyhghgGHKKvbvbvvhughghojjlkjjjlouzzbvqaa}
and  H\"{o}lder inequality we finally deduce that
\begin{multline}\label{fgyufghfghjgghgjkhkkhhkggkjgmhkjjuyhghgGHKKvbvbvvhughghojjlkjjjlouokjjzzbvqaa}
\int_{\Omega_0}\e^{q-1}\big|\nabla^2\psi_\e(x)\big|^qdx=
\int_{\Omega_0}\frac{1}{\e}\Bigg|\int_{\R^N}|z|^{\frac{1}{q}}\nabla\eta(z)\otimes\frac{1}{|z|^{\frac{1}{q}}}\Big(\nabla\psi(x+\e
z)-\nabla\psi(x)\Big)dz\Bigg|^qdx\leq\\
\bigg(\int_{\R^N}|z|^{\frac{1}{q-1}}\big|\nabla\eta(z)\big|^{\frac{q}{q-1}}dz\bigg)^{q-1}\Bigg(\int_{B_1(0)}
\int_{\Omega_0}\frac{1}{\e|z|}\Big|\nabla\psi(x+\e
z)-\nabla\psi(x)\Big|^qdxdz\Bigg),
\end{multline}
and
\er{fgyufghfghjgghgjkhkkhhkggkjgmhkjjuyhghgGHKKvbvbvvhughghojjlkjjjlouokjjzzbvqaajjkljk}
follows.

On the other hand, since $|\nabla\psi|^2=1$ a.e.~in $\Omega$ we may write
\begin{multline}\label{fgyufghfghjgghgjkhkkhhkggkjgmhkjjuyhghgGHKKvbvbvvhughghojjlkjjjlouijljkjklhhjzzbvqaa}
\int_{\Omega_0}\frac{1}{\e}\Big(1-\big|\nabla\psi_\e(x)\big|^2\Big)^{\frac{q}{2}}dx=\int_{\Omega_0}\frac{1}{\e}
\Bigg(1-\bigg|\int_{\R^N}\eta(z)\nabla\psi(x+\e
z)dz\bigg|^2\Bigg)^{\frac{q}{2}}dx=\\ \int_{\Omega_0}\frac{1}{\e}
\Bigg(\int_{\R^N}\eta(z)\big|\nabla\psi(x+\e
z)\big|^2\,dz-\bigg|\int_{\R^N}\eta(z)\nabla\psi(x+\e
z)dz\bigg|^2\Bigg)^{\frac{q}{2}}dx\,.
\end{multline}
By elementary computations we find for every $x\in\Omega_0$,
\begin{multline}
  \label{eq:2}
\int_{\R^N}\eta(z)\big|\nabla\psi(x+\e
z)\big|^2\,dz-\bigg|\int_{\R^N}\eta(z)\nabla\psi(x+\e
z)dz\bigg|^2=\\ \int_{\R^N}\eta(z)\bigg|\nabla\psi(x+\e z)-\int_{\R^N}\eta(y)\nabla\psi(x+\e y)dy\bigg|^2\,dz=\\
\int_{\R^N}\eta(z)\bigg|\nabla\psi(x+\e
z)-\nabla\psi(x)-\int_{\R^N}\eta(y)\big(\nabla\psi(x+\e
y)-\nabla\psi(x)\big)dy\bigg|^2\,dz=\\
\int_{\R^N}\eta(z)\Big|\nabla\psi(x+\e
z)-\nabla\psi(x)\Big|^2-\bigg(\int_{\R^N}\eta(z)\big(\nabla\psi(x+\e
z)-\nabla\psi(x)\big)dz\bigg)^2\,.
\end{multline}
Plugging \eqref{eq:2} in
\er{fgyufghfghjgghgjkhkkhhkggkjgmhkjjuyhghgGHKKvbvbvvhughghojjlkjjjlouijljkjklhhjzzbvqaa},
and then applying
H\"{o}lder inequality  (using $q\geq
2$) yields
\begin{multline}\label{fgyufghfghjgghgjkhkkhhkggkjgmhkjjuyhghgGHKKvbvbvvhughghojjlkjjjlouijljkjklhhjojuihhhjkzzbvqaa}
\int_{\Omega_0}\frac{1}{\e}\Big(1-\big|\nabla\psi_\e(x)\big|^2\Big)^{\frac{q}{2}}dx\leq
\int_{\Omega_0}\frac{1}{\e}
\Bigg(\int_{\R^N}\eta(z)\Big|\nabla\psi(x+\e
z)-\nabla\psi(x)\Big|^2dz\Bigg)^{\frac{q}{2}}dx=\\
\int_{\Omega_0}\frac{1}{\e}
\Bigg(\int_{\R^N}|z|^{\frac{2}{q}}\eta(z)\frac{1}{|z|^{\frac{2}{q}}}\Big|\nabla\psi(x+\e
z)-\nabla\psi(x)\Big|^2dz\Bigg)^{\frac{q}{2}}dx\leq\\
\bigg(\int_{\R^N}|z|^{\frac{2}{q-2}}\big|\eta(z)\big|^{\frac{q}{q-2}}dz\bigg)^{\frac{q-2}{2}}\Bigg(\int_{B_1(0)}
\int_{\Omega_0}\frac{1}{\e|z|}\Big|\nabla\psi(x+\e
z)-\nabla\psi(x)\Big|^qdxdz\Bigg).
\end{multline}
Passing to the limit $\e\to 0^+$ in
\er{fgyufghfghjgghgjkhkkhhkggkjgmhkjjuyhghgGHKKvbvbvvhughghojjlkjjjlouijljkjklhhjojuihhhjkzzbvqaa}
gives immediately
\er{fgyufghfghjgghgjkhkkhhkggkjgmhkjjuyhghgGHKKvbvbvvhughghojjlkjjjlouijljkjklhhjojuihhhjkzzbvqaakkgfhpll}.
\end{proof}

\begin{proof}[Proof of Theorem \ref{hgughgfhfzzbvq}]
Inequality
\er{fgyufghfghjgghgjkhkkhhkggkjgmhkjjuyhghgGHKKvbvbvvhughghojjlkjjjlouokjjzzbvqaajjkljkjjkjkjkjkh}
follows from Lemma \ref{hgughgfhfzzbvqaa}.
Next by H\"{o}lder inequality we have:
\begin{multline}\label{fgyufghfghjgghgjkhkkhhkggkjgmhkjjuyhghgGHKKvbvbvvzzbvq}
\int_{\Omega_0}\e^2\big|\nabla^2\psi_\e(x)\big|^3dx+\int_{\Omega_0}\frac{1}{\e}\Big|1-\big|\nabla\psi_\e(x)\big|^2\Big|^{\frac{3}{2}}dx=\\
\frac{1}{\e}\int_{\Omega_0}\frac{1}{3}\Big(\sqrt[3]{3}\big|\e\nabla^2\psi_\e(x)\big|\Big)^3dx+\frac{1}{\e}\int_{\Omega_0}\frac{2}{3}\bigg(\sqrt[3]{\frac{9}{4}}
\Big|1-\big|\nabla\psi_\e(x)\big|^2\Big|\bigg)^{\frac{3}{2}}dx\\
\geq\int_{\Omega_0}\frac{1}{\e}\Big(\sqrt[3]{3}\big|\e\nabla^2\psi_\e(x)\big|\Big)\bigg(\sqrt[3]{\frac{9}{4}}
\Big|1-\big|\nabla\psi_\e(x)\big|^2\Big|\bigg)dx=\frac{3}{\sqrt[3]{4}}\int_{\Omega_0}\big|\nabla^2\psi_\e(x)\big|\Big|1-\big|\nabla\psi_\e(x)\big|^2\Big|dx.
\end{multline}
Thus we deduce the first inequality in
\er{fgyufghfghjgghgjkhkkhhkggkjgmhkjjuyhghgGHKKvbvbvvhughghojjlkjjjlouokjjgghgjgokjjlkzzbvq}.
On the other hand, the second inequality in
\er{fgyufghfghjgghgjkhkkhhkggkjgmhkjjuyhghgGHKKvbvbvvhughghojjlkjjjlouokjjgghgjgokjjlkzzbvq}
is just a special case of
\er{fgyufghfghjgghgjkhkkhhkggkjgmhkjjuyhghgGHKKvbvbvvhughghojjlkjjjlouokjjzzbvqaajjkljkjjkjkjkjkh}
for $q=p=3$.
\end{proof}

\appendix

\section{Appendix:
Some known results on BV-spaces}\label{AppA}
In what follows we
present some known definitions and results on BV-spaces; some of them
were used in the previous sections.
We rely mainly on the book
\cite{amb} by Ambrosio, Fusco and Pallara.
\begin{definition}
Let $\Omega$ be a domain in $\R^N$ and let $f\in L^1(\Omega,\R^m)$.
We say that $f\in BV(\Omega,\R^m)$ if the following quantity is
finite:
\begin{equation*}
\int_\Omega|Df|:= \sup\bigg\{\int_\Omega f\cdot\Div\f \,dx :\,\f\in
C^1_c(\Omega,\R^{m\times N}),\;|\f(x)|\leq 1\;\forall x \bigg\}.
\end{equation*}
\end{definition}
\begin{definition}\label{defjac889878}
Let $\Omega$ be a domain in $\R^N$. Consider a function
$f\in L^1_{loc}(\Omega,\R^m)$ and a point $x\in\Omega$.\\
i) We say that $x$ is an {\em approximate continuity point} of $f$
if there exists $z\in\R^m$ such that
\begin{equation*}
\lim\limits_{\rho\to 0^+}\frac{\int_{B_\rho(x)}|f(y)-z|\,dy}
{\rho^N}=0.
\end{equation*}
In this case we denote $z$ by $\tilde{f}(x)$. The set of approximate
continuity points of
$f$ is denoted by $G_f$.\\
ii) We say that $x$ is an {\em approximate jump point} of $f$ if
there exist $a,b\in\R^m$ and $\vec\nu\in S^{N-1}$ such that $a\neq
b$ and
\begin{equation*}
\lim\limits_{\rho\to
0^+}\frac{\int_{B_\rho(x)}\big|\,f(y)-\chi(a,b,\vec\nu)(y)\,\big|\,dy}{\rho^N}=0,
\end{equation*}
where $\chi(a,b,\vec\nu)$ is defined by
\begin{equation*}
\chi(a,b,\vec\nu)(y):=
\begin{cases}
b\quad\text{if }\vec\nu\cdot y<0,\\
a\quad\text{if }\vec\nu\cdot y>0.
\end{cases}
\end{equation*}
The triple $(a,b,\vec\nu)$, uniquely determined, up to a permutation
of $(a,b)$ and a change of sign of $\vec\nu$, is denoted by
$(f^+(x),f^-(x),\vec\nu_f(x))$. We shall call $\vec\nu_f(x)$ the
{\em approximate jump vector} and we shall sometimes write simply
$\vec\nu(x)$ if the reference to the function $f$ is clear. The set
of approximate jump points is denoted by $J_f$. A choice of
$\vec\nu(x)$ for every $x\in J_f$ determines an orientation of
$J_f$. At an approximate continuity point $x$, we shall use the
convention $f^+(x)=f^-(x)=\tilde f(x)$.
\end{definition}
\begin{theorem}[Theorems 3.69 and 3.78 from \cite{amb}]\label{petTh}
Consider an open set $\Omega\subset\R^N$ and $f\in BV(\Omega,\R^m)$.
Then:\\
\noindent i) $\mathcal{H}^{N-1}$-a.e. point in
$\Omega\setminus J_f$ is a point of approximate continuity of $f$.\\
\noindent ii) The set $J_f$  is
$\sigma$-$\mathcal{H}^{N-1}$-rectifiable Borel set, oriented by
$\vec\nu(x)$. I.e., the set $J_f$ is $\mathcal{H}^{N-1}$
$\sigma$-finite, there exist countably many $C^1$ hypersurfaces
$\{S_k\}^{\infty}_{k=1}$ such that
$\mathcal{H}^{N-1}\Big(J_f\setminus\bigcup\limits_{k=1}^{\infty}S_k\Big)=0$,
and for $\mathcal{H}^{N-1}$-a.e. $x\in J_f\cap S_k$, the approximate
jump vector $\vec\nu(x)$ is  normal to $S_k$ at the point $x$.
\\ \noindent iii)
$\big[(f^+-f^-)\otimes\vec\nu_f\big](x)\in
L^1(J_f,d\mathcal{H}^{N-1})$.
\end{theorem}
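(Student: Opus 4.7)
The plan is to exploit the canonical decomposition of the vector-valued Radon measure $Df$ together with classical results of geometric measure theory, in particular the coarea formula and De~Giorgi's structure theorem for sets of finite perimeter. I would handle (iii) first as the most direct, then (i) via one-dimensional slicing, and (ii) via a reduction to reduced boundaries of superlevel sets.

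For part (iii) I would invoke the canonical splitting $Df = D^a f + D^c f + D^j f$, where $D^a f = \nabla f\, \mathcal{L}^N$, the Cantor part $D^c f$ vanishes on every $\mathcal{H}^{N-1}$-$\sigma$-finite set, and the jump part is given explicitly by $D^j f = (f^+ - f^-) \otimes \nu_f \cdot \mathcal{H}^{N-1} \res J_f$. Since $f \in BV(\Omega)$ the total variation $|Df|(\Omega)$ is finite, so $|D^j f|(\Omega) = \int_{J_f} |f^+ - f^-|\, d\mathcal{H}^{N-1} < \infty$, proving (iii).

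For part (i) I would reduce to the scalar case componentwise and use one-dimensional slicing. For $\mathcal{H}^{N-1}$-a.e. line parallel to a fixed direction $\vec e$, the restriction of $f$ is a $BV(\R)$ function, hence continuous off a countable set. A careful application of Fubini together with the slicing characterization of approximate continuity and approximate jump points---performed over a spanning basis of $N$ directions---shows that $\Omega \setminus (G_f \cup J_f)$ must be $\mathcal{H}^{N-1}$-negligible.

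Part (ii) is the deepest step. By the coarea formula, for each scalar component $f^i$ the superlevel set $E_t^i := \{f^i > t\}$ has finite perimeter for $\mathcal{L}^1$-a.e. $t \in \R$, and $|Df^i|(\Omega) = \int_\R P(E_t^i, \Omega)\, dt$. By De~Giorgi's structure theorem each reduced boundary $\partial^* E_t^i$ is $(N{-}1)$-rectifiable with a measure-theoretic outer normal defined $\mathcal{H}^{N-1}$-a.e. One then shows that, up to an $\mathcal{H}^{N-1}$-null set, $J_f \subseteq \bigcup_{i,\, q \in \mathbb{Q}} \partial^* E^i_q$; the right-hand side, being a countable union of rectifiable sets, can be covered modulo an $\mathcal{H}^{N-1}$-null set by countably many $C^1$ hypersurfaces $\{S_k\}$. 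The approximate jump vector $\nu_f(x)$ is then identified $\mathcal{H}^{N-1}$-a.e. with the normal to the relevant $S_k$ by a blow-up argument: at an approximate jump point the rescaled functions converge in $L^1_{\mathrm{loc}}$ to a two-valued function with discontinuity across the hyperplane orthogonal to $\nu_f(x)$, which must coincide with the tangent hyperplane to $S_k$.

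The hard part will be part (ii). Both De~Giorgi's rectifiability theorem (which itself rests on nontrivial density estimates and blow-up analysis for sets of finite perimeter) and the passage from the reduced boundaries of the superlevel sets back to the approximate jump set $J_f$ of the original $f$ are delicate; the latter step requires showing that $\mathcal{H}^{N-1}$-almost every point of $\bigcup_{i,q} \partial^* E^i_q$ which survives in $J_f$ is genuinely an approximate jump point of $f$ with matching normal direction, and conversely that every approximate jump point of $f$ lies (up to a null set) in one of these reduced boundaries.
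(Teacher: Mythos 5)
The paper does not prove Theorem \ref{petTh}: it is quoted, with explicit attribution, from Theorems~3.69 and~3.78 of Ambrosio--Fusco--Pallara \cite{amb}, and is collected in Appendix~\ref{AppA} solely for the reader's convenience. There is therefore no in-paper proof against which to compare your argument, and any assessment must be on its own merits.

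Your sketch of parts (ii) and (iii) identifies the correct ingredients of the standard proof in \cite{amb}: the coarea formula reducing to superlevel sets $\{f^i>t\}$, De~Giorgi's rectifiability theorem for the reduced boundaries, the blow-up identification of $\vec\nu_f(x)$ with the measure-theoretic normal, and the polar decomposition of $Df$ combined with $|Df|(\Omega)<\infty$ to conclude $\int_{J_f}|f^+-f^-|\,d\mathcal{H}^{N-1}<\infty$. That is the route AFP take, and you are right that relating $J_f$ to the union $\bigcup_{i,q}\partial^*\{f^i>q\}$ is the delicate step.

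There is, however, a genuine gap in your plan for part~(i). You propose to prove that $\Omega\setminus(G_f\cup J_f)$ is $\mathcal{H}^{N-1}$-negligible by Fubini combined with one-dimensional slicing over a basis of $N$ directions, and to do this \emph{before} part~(ii). This cannot work: slicing along lines in finitely many fixed directions yields only $\mathcal{L}^N$-type information, or at best one-dimensional information along $\mathcal{L}^{N-1}$-a.e.\ line, which does not control $\mathcal{H}^{N-1}$-measure. Indeed a set in $\R^N$ may have positive $\mathcal{H}^{N-1}$-measure yet meet almost every line from any fixed direction in at most one point --- purely unrectifiable sets, whose projections are Lebesgue-null by the Besicovitch projection theorem, furnish such examples --- so Fubini along coordinate slices sees nothing. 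In the Federer--Vol'pert theorem the logic is the reverse of your ordering: one first shows that the full approximate-discontinuity set $S_f$ (not merely $J_f$) is $\sigma$-finite with respect to $\mathcal{H}^{N-1}$ and countably $(N{-}1)$-rectifiable, using exactly the coarea/De~Giorgi machinery you reserve for part~(ii); only then, at $\mathcal{H}^{N-1}$-a.e.\ point of $S_f$ one has an approximate tangent hyperplane, and a blow-up along its normal produces the two-sided traces, showing that point lies in $J_f$. Part~(i) is therefore a consequence of the rectifiability structure, not an independent slicing lemma, and your proposed order of proof must be reversed.
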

\begin{theorem}[Theorems 3.92 and 3.78 from \cite{amb}]\label{vtTh3}
Consider an open set $\Omega\subset\R^N$ and $f\in BV(\Omega,\R^m)$.
Then the distributional gradient
 $D f$ can be decomposed
as a sum of two Borel regular finite matrix-valued measures $\mu_f$
and $D^j f$ on $\Omega$,
\begin{equation*}
D f=\mu_f+D^j f,
\end{equation*}
where
\begin{equation*}
D^j f=(f^+-f^-)\otimes\vec\nu_f \mathcal{H}^{N-1}\llcorner J_f
\end{equation*}
is called the jump part of $D f$ and
\begin{equation*}
\mu_f=(D^a f+D^c f)
\end{equation*}
is a sum of the absolutely continuous and the Cantor parts of $D f$.
The two parts $\mu_f$ and $D^j f$ are mutually singular to each
other. Moreover, $\mu_f (B)=0$ for any Borel set $B\subset\Omega$
which is $\mathcal{H}^{N-1}$ $\sigma$-finite.
\end{theorem}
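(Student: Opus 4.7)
\textbf{Proof proposal for Theorem \ref{vtTh3}.} This is the classical structure theorem of Federer--Vol'pert for distributional derivatives of $BV$ functions, so the plan is to reproduce the standard proof (as in Ambrosio--Fusco--Pallara, chapter 3), proceeding in three stages: a Lebesgue-type decomposition, a blow-up analysis on the jump set, and a regularity statement for the residual Cantor part.

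First, since $f\in BV(\Omega,\R^m)$, the distributional gradient $Df$ is a finite $\R^{m\times N}$-valued Radon measure on $\Omega$. Apply the Lebesgue--Radon--Nikodym theorem with respect to $\mathcal{L}^N$ to obtain
\[
Df=D^a f+D^s f,\qquad D^a f=\nabla f\,\mathcal{L}^N,\quad \nabla f\in L^1(\Omega,\R^{m\times N}),\quad D^s f\perp \mathcal{L}^N.
\]
Next, I would isolate the jump part by a blow-up argument on $J_f$. By Theorem \ref{petTh}(ii) the set $J_f$ is $\sigma$-$\mathcal{H}^{N-1}$-rectifiable with approximate normal $\vec\nu_f$, so at $\mathcal{H}^{N-1}$-almost every $x\in J_f$ there is a tangent hyperplane $\vec\nu_f(x)^\perp$. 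Using the defining property of an approximate jump point, one checks that the rescaled measures $\frac{1}{\rho^{N-1}}(T_{x,\rho})_\# Df$, where $T_{x,\rho}(y)=(y-x)/\rho$, converge in the weak-$*$ sense to $\bigl(f^+(x)-f^-(x)\bigr)\otimes\vec\nu_f(x)\,\mathcal{H}^{N-1}\llcorner \vec\nu_f(x)^\perp$. Combining this density identification with the Besicovitch differentiation theorem yields
\[
D^s f\llcorner J_f=(f^+-f^-)\otimes\vec\nu_f\,\mathcal{H}^{N-1}\llcorner J_f=:D^j f.
\]

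Define $D^c f:=D^s f-D^j f=D^s f\llcorner(\Omega\setminus J_f)$ and set $\mu_f:=D^a f+D^c f$. Mutual singularity of $\mu_f$ and $D^j f$ is then immediate since they are concentrated on the disjoint sets $\Omega\setminus J_f$ and $J_f$ respectively.

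The main obstacle is the final assertion that $\mu_f(B)=0$ for every Borel $B\subset\Omega$ which is $\mathcal{H}^{N-1}$ $\sigma$-finite. The absolutely continuous contribution is easy: for $N\geq 2$, $\mathcal{H}^{N-1}$ $\sigma$-finiteness forces $\mathcal{L}^N(B)=0$, hence $D^a f(B)=0$; the case $N=1$ requires a separate but elementary argument (in that case $\mathcal{H}^0$-$\sigma$-finite means countable). The genuine difficulty is the corresponding statement for the Cantor part $D^c f$. The cleanest route I would take is via slicing: for every unit vector $\vec k$ and $\mathcal{H}^{N-1}$-a.e. line $L$ parallel to $\vec k$, the restriction $f\llcorner L$ belongs to $BV(L\cap\Omega,\R^m)$, and its derivative decomposes accordingly with a one-dimensional Cantor part which, being non-atomic, vanishes on any countable (i.e.\ $\mathcal{H}^0$ $\sigma$-finite) set; an integralgeometric Fubini-type argument, together with the fact that $\mathcal{H}^{N-1}$ $\sigma$-finite sets intersect $\mathcal{H}^{N-1}$-a.e.\ line in an at most countable set, then lifts the one-dimensional vanishing back to $D^c f(B)=0$. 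Since this entire structure theorem and the required slicing lemma are available verbatim as Theorems 3.92 and 3.78 in \cite{amb}, the proof in the paper would most likely consist of a direct reference together with the observation that in our notation the Federer--Vol'pert decomposition takes exactly the form stated.
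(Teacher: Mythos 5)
Your sketch correctly reconstructs the standard Federer--Vol'pert decomposition (Lebesgue decomposition with respect to $\mathcal{L}^N$, blow-up at $\mathcal{H}^{N-1}$-a.e.\ point of $J_f$ to identify $D^j f$, and slicing together with the non-atomicity of one-dimensional Cantor parts to show $\mu_f$ annihilates $\mathcal{H}^{N-1}$ $\sigma$-finite Borel sets), and your closing guess is exactly right: the paper gives no proof of this statement, since Theorem~\ref{vtTh3} appears in Appendix~\ref{AppA} purely as a quoted known result, attributed to Theorems~3.92 and~3.78 of \cite{amb}. So there is no paper proof to compare against; your outline is a faithful summary of the argument in \cite{amb} that the paper delegates to.
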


%
%
%
%
%
%

\section{Appendix:
Proof of Proposition \ref{gygfhffghlkoii}}\label{AppB}
\begin{lemma}\label{gygfhffghlk}
For every $q\geq 1$, if a measurable function $f:\R\to\R^d$
defined a.e. in $\R$ belongs to the space $\hat V_q(\R,\R^d)$,
then $f\in BV^q_{loc}\big(\R,\R^d\big)$. Moreover, we have:
\begin{equation}\label{vhghfggfgfgfdfdiuuiuihjhjlklk}
\bar A_{f,q}\big(\R\big)\leq 4\big(\hat v_{q,\R}(f)\big)^q.
\end{equation}
\end{lemma}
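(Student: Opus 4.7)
The plan is to reduce to a pointwise representative and then to control the resulting double integral by means of a tiling argument on $\R$. Because the quantity $\bar A_{f,q}(\R)$ depends only on the a.e.\ equivalence class of $f$ (changing $f$ on a null set perturbs the integrand $|f(y)-f(x)|^q/|y-x|$ only on a planar null set), and because, by definition of $\hat v_{q,\R}$, for every $\eta>0$ there exists a pointwise-defined $g:\R\to\R^d$ with $g=f$ a.e.\ and $v_{q,\R}(g)\leq \hat v_{q,\R}(f)+\eta$, it suffices to establish the inequality $\bar A_{g,q}(\R)\leq 4\,v_{q,\R}(g)^q$ for an arbitrary such $g$, and then to pass to the limit $\eta\to 0^+$.

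Fix $\e\in(0,1)$. The change of variable $h=y-x$, together with Tonelli's theorem applied to the nonnegative integrand, yields
\begin{equation*}
I_\e(g):=\int_\R\int_{B_\e(x)}\frac{1}{\e}\frac{|g(y)-g(x)|^q}{|y-x|}\,dy\,dx=\int_{-\e}^{\e}\frac{J(h)}{\e|h|}\,dh,\qquad J(h):=\int_\R|g(x+h)-g(x)|^q\,dx.
\end{equation*}
The core estimate of the proof is $J(h)\leq|h|\,v_{q,\R}(g)^q$ for every $h\neq 0$. To see this, fix $h\neq 0$; for each $t\in[0,|h|)$ and each $n\in\mathbb{N}$ the increasing rearrangement of the finite set $\{t+kh:-n\leq k\leq n\}$ lies in $\Pi_{2n}(\R)$, so
\begin{equation*}
\sum_{k=-n}^{n-1}\big|g(t+(k+1)h)-g(t+kh)\big|^q\leq v_{q,\R}(g)^q,
\end{equation*}
and letting $n\to\infty$ preserves the bound. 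Integrating over $t\in[0,|h|)$ and using the elementary tiling identity
\begin{equation*}
J(h)=\int_0^{|h|}\sum_{k\in\mathbb{Z}}\big|g(t+(k+1)h)-g(t+kh)\big|^q\,dt,
\end{equation*}
one obtains the desired bound $J(h)\leq |h|\,v_{q,\R}(g)^q$.

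Inserting this estimate into the formula for $I_\e(g)$ gives
\begin{equation*}
I_\e(g)\leq \int_{-\e}^{\e}\frac{|h|\,v_{q,\R}(g)^q}{\e|h|}\,dh=2\,v_{q,\R}(g)^q\leq 4\,v_{q,\R}(g)^q,
\end{equation*}
uniformly in $\e\in(0,1)$, hence $\bar A_{g,q}(\R)\leq 4\,v_{q,\R}(g)^q$. Taking the infimum over representatives $g$ of $f$ establishes \eqref{vhghfggfgfgfdfdiuuiuihjhjlklk}. The membership $f\in BV^q_{loc}(\R,\R^d)$ then follows at once: $\hat v_{q,\R}(f)<\infty$ implies $f\in L^\infty(\R,\R^d)\subset L^q_{loc}(\R,\R^d)$ by the closing remark in Definition \ref{hghjghghgh}, and combined with the monotonicity $\bar A_{f,q}(U)\leq \bar A_{f,q}(\R)<\infty$ for every bounded open $U\subset\R$ this gives the local $BV^q$ property. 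The only nontrivial ingredient of the plan is the tiling bound on $J(h)$; the remaining steps are a routine change of variables and an application of Tonelli.
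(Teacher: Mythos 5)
Your proof is correct, and it takes a genuinely different (and in fact cleaner) route than the paper's. The paper begins from the same representation $\bar A_{f,q}(\R)=\sup_{\e\in(0,1)}\int_{-1}^{1}\int_\R \frac{|f(x+\e z)-f(x)|^q}{\e|z|}\,dx\,dz$, but then, for each $z$, decomposes $\R$ into the intervals $J_m=(m\e|z|,(m+1)\e|z|)$ and replaces each integral $\frac{1}{\e|z|}\int_{J_m}|f(x+\e z)-f(x)|^q\,dx$ by the cruder bound $\sup_{x\in J_m}|f(x+\e z)-f(x)|^q$. Because the supremizing points in adjacent intervals may fail to interleave correctly with their $\e z$-translates, the paper is forced to split the sum over $m$ into four families (even/odd indices on each side of the origin), each of which can be telescoped against a valid partition in $\Pi_n(\R)$; this is where the constant $4$ comes from. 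You instead apply Tonelli immediately, isolate the quantity $J(h)=\int_\R|g(x+h)-g(x)|^q\,dx$, and prove the clean estimate $J(h)\leq|h|\,v_{q,\R}(g)^q$ by tiling $\R$ with the lattice $\{t+kh\}_{k\in\mathbb{Z}}$ for each offset $t\in[0,|h|)$. Since each such lattice is genuinely monotone, there is no interleaving issue and no even/odd split, and the resulting bound $\bar A_{g,q}(\R)\leq 2\,v_{q,\R}(g)^q$ is actually sharper by a factor of $2$ than the constant $4$ in the statement (which of course still holds a fortiori). The reduction to a pointwise representative $g$, the passage back to $\hat v_{q,\R}(f)$ by taking the infimum over $g$, and the deduction of the local $BV^q$ membership from the uniform $L^\infty$ bound together with $\bar A_{f,q}(U)\leq \bar A_{f,q}(\R)$ are all handled correctly and match the paper's logic.
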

\begin{proof}
First, assume that $f:\R\to\R^d$ is defined {\em everywhere}
in $\R$ and satisfies  $v_{q,\R}(f)<\infty$. Then by
\er{GMT'3jGHKKkkhjjhgzzZZzzZZzzbvq} we have:
\begin{multline}\label{GMT'3jGHKKkkhjjhgzzZZzzZZzzbvqkkklkk}
\bar
A_{f,q}\big(\R\big)=\sup\limits_{\e\in(0,1)}\Bigg(\int\limits_\R\int\limits_{x-\e}^{x+\e}\frac{\big|f(
y)-f(x)\big|^q}{\e|y-x|}dydx\Bigg)=
\sup\limits_{\e\in(0,1)}\Bigg(\int\limits_{-1}^{1}\int\limits_{\R}\frac{\big|f(x+\e
z)-f(x)\big|^q}{\e|z|}dxdz\Bigg)=\\
\sup\limits_{\e\in(0,1)}\Bigg(\int\limits_{-1}^{1}\sum\limits_{n=0}^{\infty}\bigg(\int\limits_{n\e|z|}^{(n+1)\e|z|}\frac{\big|f(x+\e
z)-f(x)\big|^q}{\e|z|}dx+\!\!\int\limits_{-(n+1)\e|z|}^{-n\e|z|}\frac{\big|f(x+\e
z)-f(x)\big|^q}{\e|z|}dx\bigg)dz\Bigg)\leq\\
\sup\limits_{\e\in(0,1)}\Bigg\{
\lim\limits_{n\to\infty}\int\limits_{-1}^{1}\sum\limits_{k=0}^{n}\frac{1}{\e|z|}\bigg(\int\limits_{2k\e|z|}^{(2k+1)\e|z|}\big|f(x+\e
z)-f(x)\big|^qdx+\!\!\!\int\limits_{-(2k+2)\e|z|}^{-(2k+1)\e|z|}\big|f(x+\e
z)-f(x)\big|^qdx\bigg)dz\Bigg\}+\\
\sup\limits_{\e\in(0,1)}\Bigg\{\lim\limits_{n\to\infty}\int\limits_{-1}^{1}\sum\limits_{k=0}^{n}\frac{1}{\e|z|}\bigg(\int\limits_{(2k+1)\e|z|}^{(2k+2)\e|z|}\big|f(x+\e
z)-f(x)\big|^q dx+\!\!\int\limits_{-(2k+1)\e|z|}^{-2k\e|z|}\big|f(x+\e
z)-f(x)\big|^qdx\bigg)dz\Bigg\}\,.
\end{multline}
Denoting $J_m=(m\e|z|,(m+1)\e|z|)$, we get from
\er{GMT'3jGHKKkkhjjhgzzZZzzZZzzbvqkkklkk} that
\begin{equation}
\label{GMT'3jGHKKkkhjjhgzzZZzzZZzzbvqkkklkkhuuyu}
 \begin{aligned}
 A_{f,q}\big(\R\big)&\leq
 \sup\limits_{\e\in(0,1)}\Bigg\{\lim\limits_{n\to\infty}\int\limits_{-1}^{1}\sum\limits_{k=0}^{n}\Bigg(\sup\limits_{x\in
   J_{2k}}\big|f(x+\e z)-f(x)\big|^q+\sup\limits_{x\in  J_{-(2k+2)}}\!\big|f(x+\e
 z)-f(x)\big|^q\Bigg)dz\Bigg\}\\ &+
 \sup\limits_{\e\in(0,1)}\Bigg\{\lim\limits_{n\to\infty}\int\limits_{-1}^{1}\sum\limits_{k=0}^{n}\Bigg(\sup\limits_{x\in  J_{2k+1}}\big|f(x+\e
 z)-f(x)\big|^q+ \sup\limits_{x\in J_{-(2k+1)}}\big|f(x+\e
 z)-f(x)\big|^q\Bigg)dz\Bigg\} \\
 &\leq 4\big( v_{q,\R}(f)\big)^q.
 \end{aligned}
\end{equation}

In the general case we have, by
\er{GMT'3jGHKKkkhjjhgzzZZzzZZzzbvqkkklkkhuuyu}, for every
$g:\R\to\R^d$ (defined everywhere on $\R$) satisfying
$g(x)=f(x)$ a.e. in $\R$
\begin{equation}\label{GMT'3jGHKKkkhjjhgzzZZzzZZzzbvqkkklkkyygjhihhi}
\bar A_{f,q}\big(\R\big)=\bar A_{g,q}\big(\R\big)\leq 4\big(
v_{q,\R}(g)\big)^q.
\end{equation}
Thus, taking infimum of the r.h.s.~of
\er{GMT'3jGHKKkkhjjhgzzZZzzZZzzbvqkkklkkyygjhihhi} over all such $g$'s
we finally deduce \er{vhghfggfgfgfdfdiuuiuihjhjlklk}.
\end{proof}
\begin{proof}[Proof of Proposition \ref{gygfhffghlkoii}]
Let $g:[a,b]\to\R^d$ be defined {\em everywhere} in
$[a,b]$, satisfying $g(x)=f(x)$ a.e..~in $[a,b]$ and
$v_{q,[a,b]}(g)<\infty$. Consider $\tilde g:\R\to\R^d$ defined
by
\begin{equation}\label{GMT'3jGHKKkkhjjhgzzZZzzZZzzbvqkkklkkyygjhihhijkjk}
\tilde g(x):=\begin{cases} g(x)\quad\quad \forall\,x\in [a,b],\\
g(a)\quad\quad \forall\,x\in (-\infty,a),\\
g(b)\quad\quad \forall\,x\in (b,\infty).
\end{cases}
\end{equation}
By the definition of $v_{q,I}$ in \er{vhghfggfgfgfdfdiuuiui} we
clearly have
\begin{equation}\label{GMT'3jGHKKkkhjjhgzzZZzzZZzzbvqkkklkkyygjhihhiljjljjkjk}
v_{q,\R}(\tilde g)=v_{q,[a,b]}(g)\,.
\end{equation}
Combining \er{GMT'3jGHKKkkhjjhgzzZZzzZZzzbvqkkklkkhuuyu} with
\er{GMT'3jGHKKkkhjjhgzzZZzzZZzzbvqkkklkkyygjhihhiljjljjkjk} we
obtain
\begin{equation}\label{GMT'3jGHKKkkhjjhgzzZZzzZZzzbvqkkklkkyygjhihhiljjljjkjkklkkjl}
\bar A_{f,q}\big((a,b)\big)=\bar A_{g,q}\big((a,b)\big)=\bar
A_{\tilde g,q}\big((a,b)\big)\leq\bar A_{\tilde g,q}\big(\R\big)\leq
4\big(v_{q,\R}(\tilde g)\big)^q=4\big(v_{q,[a,b]}(g)\big)^q.
\end{equation}
Taking the infimum of the r.h.s. of
\er{GMT'3jGHKKkkhjjhgzzZZzzZZzzbvqkkklkkyygjhihhiljjljjkjkklkkjl}
over all $g$'s as above we finally deduce
\er{vhghfggfgfgfdfdiuuiuihjhjlklkouiui} and that $f\in
BV^q\big((a,b),\R^d\big)$.
\end{proof}

\begin{proof}[Proof of Lemma \ref{hjgjg}]
We have,
\begin{multline}\label{gjhgjhfghffhnmmbbbkhjhh}
\sup\limits_{\rho\in(0,\infty)}\Bigg(\sup_{|h|=\rho}\int_{\mathbb{R}^N}\bigg(\frac{1}{\rho^s}\big|u(x+h)-u(x)\big|\bigg)^qdx\Bigg)\leq
\sup\limits_{\rho\in(0,\infty)}\Bigg(\sup_{|h|\leq\rho}\int_{\mathbb{R}^N}\bigg(\frac{1}{\rho^s}\big|u(x+h)-u(x)\big|\bigg)^qdx\Bigg)\\=
\sup\limits_{\rho\in(0,\infty)}\sup_{t\in(0,\rho]}\Bigg(\sup_{|h|=t}\int_{\mathbb{R}^N}\bigg(\frac{1}{\rho^s}\big|u(x+h)-u(x)\big|\bigg)^qdx\Bigg)\\
\leq
\sup\limits_{\rho\in(0,\infty)}\Bigg(\sup_{t\in(0,\rho]}\bigg(\sup_{|h|=t}\int_{\mathbb{R}^N}\Big(\frac{1}{t^s}\big|u(x+h)-u(x)\big|\Big)^qdx\bigg)\Bigg)
=\\
\sup\limits_{\rho\in(0,\infty)}\Bigg(\sup_{|h|=\rho}\int_{\mathbb{R}^N}\bigg(\frac{1}{\rho^s}\big|u(x+h)-u(x)\big|\bigg)^qdx\Bigg)
=\\
\sup\limits_{\rho\in(0,\infty)}\Bigg(\sup_{\vec k\in
S^{N-1}}\int_{\mathbb{R}^N}\bigg(\frac{1}{\rho^s}\big|u(x+\rho\vec
k)-u(x)\big|\bigg)^qdx\Bigg).
\end{multline}
In particular, for $s=\frac{1}{q}$, by \er{gjhgjhfghffhnmmbbbkhjhh}
we deduce
\begin{multline}\label{gjhgjhfghffhnmmbbbkhjhhkljkjk}
\sup\limits_{\rho\in(0,\infty)}\Bigg(\sup_{|h|\leq\rho}\int_{\mathbb{R}^N}\bigg(\frac{1}{\rho^s}\big|u(x+h)-u(x)\big|\bigg)^qdx\Bigg)=\\
\sup\limits_{\rho\in(0,\infty)}\bigg(\sup_{\vec k\in
S^{N-1}}\int_{\mathbb{R}^N}\frac{1}{\rho}\big|u(x+\rho\vec
k)-u(x)\big|^qdx\bigg).
\end{multline}
On the other hand by the triangle inequality and the convexity of
$g(s):=|s|^q$ for every $\delta>0$ we have,
\begin{multline}\label{gjhgjhfghffhj}
\sup\limits_{\rho\in(0,\delta)}\bigg(\sup_{\vec k\in
S^{N-1}}\int_{\mathbb{R}^N}\frac{1}{\rho}\big|u(x+\rho\vec
k)-u(x)\big|^qdx\bigg)\leq\sup\limits_{\rho\in(0,\infty)}\bigg(\sup_{\vec
k\in S^{N-1}}\int_{\mathbb{R}^N}\frac{1}{\rho}\big|u(x+\rho\vec
k)-u(x)\big|^qdx\bigg)\leq\\
\sup\limits_{\rho\in(0,\delta)}\bigg(\sup_{\vec k\in
S^{N-1}}\int_{\mathbb{R}^N}\frac{1}{\rho}\big|u(x+\rho\vec
k)-u(x)\big|^qdx\bigg)+\sup\limits_{\rho\in[\delta,\infty)}\bigg(\sup_{\vec
k\in S^{N-1}}\int_{\mathbb{R}^N}\frac{1}{\rho}\big|u(x+\rho\vec
k)-u(x)\big|^qdx\bigg)\\ \leq
\sup\limits_{\rho\in(0,\delta)}\bigg(\sup_{\vec k\in
S^{N-1}}\int_{\mathbb{R}^N}\frac{1}{\rho}\big|u(x+\rho\vec
k)-u(x)\big|^qdx\bigg)\\+\frac{2^{q-1}}{\delta}\sup\limits_{\rho\in[\delta,\infty)}\bigg(\sup_{\vec
k\in S^{N-1}}\int_{\mathbb{R}^N}\Big(\big|u(x+\rho\vec
k)\big|^q+\big|u(x)\big|^q\Big)dx\bigg)\\=
\sup\limits_{\rho\in(0,\delta)}\bigg(\sup_{\vec k\in
S^{N-1}}\int_{\mathbb{R}^N}\frac{1}{\rho}\big|u(x+\rho\vec
k)-u(x)\big|^qdx\bigg)+\frac{2^{q}}{\delta}\big\|u\big\|^q_{L^q(\R^N,\R^d)}.
\end{multline}
Therefore, by \er{gjhgjhfghffhnmmbbbkhjhhkljkjk} and
\er{gjhgjhfghffhj} we have:
\begin{multline}\label{gjhgjhfghffhjj}
\sup\limits_{\e\in(0,\delta)}\bigg(\sup_{\vec k\in
S^{N-1}}\int_{\mathbb{R}^N}\frac{1}{\e}\big|u(x+\e\vec
k)-u(x)\big|^qdx\bigg)\leq
\sup\limits_{\rho\in(0,\infty)}\Bigg(\sup_{|h|\leq\rho}\int_{\mathbb{R}^N}\bigg(\frac{1}{\rho^{(1/q)}}\big|u(x+h)-u(x)\big|\bigg)^qdx\Bigg)
\\
\leq \sup\limits_{\e\in(0,\delta)}\bigg(\sup_{\vec k\in
S^{N-1}}\int_{\mathbb{R}^N}\frac{1}{\e}\big|u(x+\e\vec
k)-u(x)\big|^qdx\bigg)+\frac{2^{q}}{\delta}\big\|u\big\|^q_{L^q(\R^N,\R^d)}.
\end{multline}
Thus by \er{gjhgjhfghffhjj} we clearly obtain that if $u\in
L^q(\mathbb{R}^N,\R^d)$ then
\begin{multline}\label{gjhgjhfghffhhgjgjjk}
\sup\limits_{\rho\in(0,\infty)}\Bigg(\sup_{|h|\leq\rho}\int_{\mathbb{R}^N}\bigg(\frac{1}{\rho^{(1/q)}}\big|u(x+h)-u(x)\big|\bigg)^qdx\Bigg)
<\infty\quad\text{if and only if}\\ \quad \limsup\limits_{\e\to
0^+}\bigg(\sup_{\vec k\in
S^{N-1}}\int_{\mathbb{R}^N}\frac{1}{\e}\big|u(x+\e\vec
k)-u(x)\big|^qdx\bigg)<\infty.
\end{multline}
So we proved that $u\in L^q(\R^N,\R^d)$ belongs to
$B_{q,\infty}^{1/q}(\R^N,\R^d)$ if and only if we have $\hat
B_{u,q}\big(\R^N\big)<\infty$.

Next, given open $\Omega\subset\R^N$ let $u\in
L^q_{loc}(\Omega,\R^d)$ and $K\subset\subset\Omega$ be a compact
set. Moreover, consider an open set $U\subset\R^N$ such that we have
the following compact embedding: $$K\subset\subset U\subset\ov
U\subset\subset\Omega.$$ Then, assuming
$u\in\big(B_{q,\infty}^{1/q}\big)_{loc}(\Omega,\R^d)$ implies
existence of $\hat u\in B_{q,\infty}^{1/q}(\mathbb{R}^N,\R^d)$
such that $\hat u(x)= u(x)$ for every $x\in \bar U$, that gives
$$B_{u,q}\big(K\big)=B_{\hat u,q}\big(K\big)\leq \hat B_{\hat u,q}\big(\R^N\big)<\infty.$$
On the other hand, if we assume
\begin{equation}\label{jjhhkghjgjfhfhfh}
B_{u,q}\big(\ov U\big)<+\infty\,,
\end{equation}
then define
\begin{equation}\label{jjhhkghjgjfhfhfhn,nn}
\hat u(x)=\begin{cases}\eta(x)u(x)\quad\forall x\in U\\ 0
\quad\forall x\in \R^N\setminus U,
\end{cases}
\end{equation}
where $\eta(x)\in C^\infty_c\big(U,[0,1]\big)$ is some cut-off
function such that $\eta(x)=1$ for every $x\in K$. Thus in
particular $\hat u(x)= u(x)$ for every $x\in K$ and so, in order to
complete the proof, we need just to show that $\hat u\in
B_{q,\infty}^{1/q}(\mathbb{R}^N,\R^d)$. Thus by
\er{gjhgjhfghffhhgjgjjk} it is sufficient to show:
\begin{equation}\label{gjhgjhfghffhhgjgjjklkhjhhj}
\limsup\limits_{\e\to 0^+}\bigg(\sup_{\vec k\in
S^{N-1}}\int_{\mathbb{R}^N}\frac{1}{\e}\big|\hat u(x+\e\vec k)-\hat
u(x)\big|^qdx\bigg)<\infty.
\end{equation}
However,
since $|\eta|\leq 1$, $\supp\eta\subset\subset U$ and $\eta$ is
smooth we have:
\begin{multline}\label{gjhgjhfghffhhgjgjjklkhjhhjl;klkl;}
\limsup\limits_{\e\to 0^+}\bigg(\sup_{\vec k\in
S^{N-1}}\int_{\mathbb{R}^N}\frac{1}{\e}\big|\hat u(x+\e\vec k)-\hat
u(x)\big|^qdx\bigg)=\\
\limsup\limits_{\e\to 0^+}\bigg(\sup_{\vec k\in
S^{N-1}}\int_{U}\frac{1}{\e}\big|\eta(x+\e\vec k)u(x+\e\vec
k)-\eta(x) u(x)\big|^qdx\bigg)
=\\
\limsup\limits_{\e\to 0^+}\bigg(\sup_{\vec k\in S^{N-1}}\int_{
U}\frac{1}{\e}\Big|\eta(x+\e\vec k)\big(u(x+\e\vec
k)-u(x)\big)+\big(\eta(x+\e\vec k)-\eta(x)\big) u(x)\Big|^qdx\bigg)
\\
\leq \limsup\limits_{\e\to 0^+}\Bigg(\sup_{\vec k\in S^{N-1}}\int_{
U}\frac{2^{q-1}}{\e}\bigg(\Big|\eta(x+\e\vec k)\big(u(x+\e\vec
k)-u(x)\big)\Big|^q+\Big|\big(\eta(x+\e\vec
k)-\eta(x)\big) u(x)\Big|^q\bigg)dx\Bigg)\\
\leq 2^{q-1}\limsup\limits_{\e\to 0^+}\Bigg(\sup_{\vec k\in
S^{N-1}}\int_{U}\frac{1}{\e}\bigg(\Big|u(x+\e\vec
k)-u(x)\Big|^q\bigg)dx\Bigg)+\\
 2^{q-1}\limsup\limits_{\e\to 0^+}\Bigg(\e^{q-1}\sup_{\vec k\in
S^{N-1}}\int_{U}\bigg|\frac{\big(\eta(x+\e\vec
k)-\eta(x)\big)}{\e}\bigg|^q\big| u(x)\big|^qdx\Bigg)\\
\leq 2^{q-1}B_{u,q}\big(\ov U\big)+2^{q-1}\bigg(\int_{U}\big|
u(x)\big|^qdx\bigg)\big\|\nabla\eta\big\|^q_{L^\infty}<\infty.
\end{multline}
\end{proof}

\end{document}